\renewcommand*\env@matrix[1][*\c@MaxMatrixCols c]{%
  \hskip -\arraycolsep
  \let\@ifnextchar\new@ifnextchar
  \array{#1}}
 \definecolor{green(pigment)}{rgb}{0.0, 0.65, 0.31}
\newcommand{\revision}[1]{{#1}}
 \definecolor{red(pigment)}{rgb}{0.65, 0.0, 0.31}
\newcommand{\rev}[1]{{#1}}
\title{Sampling the Fourier transform along radial lines}
\author{Charles Dossal \thanks{IMB, Universit\'e de Bordeaux (\texttt{charles.dossal@math.u-bordeaux.fr})}
  \and Vincent Duval\thanks{MOKAPLAN, INRIA Paris, CEREMADE,  Universit\'e  Paris-Dauphine (\texttt{vincent.duval@inria.fr})} 
\and Clarice Poon \thanks{
DAMTP, Centre for Mathematical Sciences, University of Cambridge
(\texttt{C.M.H.S.Poon@maths.cam.ac.uk})} \thanks{corresponding author}
}
\date{December, 2016; Revised May, 2017}
\begin{document}
\maketitle

\abstract{This article considers the use of total variation minimization for the recovery of a superposition of point sources from samples of its Fourier transform along radial lines. We present a numerical algorithm for the computation of solutions to this infinite dimensional problem. The theoretical results of this paper make precise the link between the sampling operator and the recoverability of the point sources.}

%
%
%

\section{Introduction}

Over the last decades, the use of regularizers for inverse problems has largely shifted from the use of Tikhonov regularization to $\ell^1$ minimization, following the pioneering works of Donoho and Tibshirani~\cite{chen1999atomi,tibshirani1996regre}. Using these approaches, one is generally led to search for a sparse signal on a predefined grid and to solve a finite-dimensional problem. Furthermore, the introduction of compressed sensing \cite{donohoCS,candes2006robust} has in particular triggered an intense amount of research into the notion of sparse recovery.
Although compressed sensing has largely been a finite dimensional theory, there has, in recent years, been several works studying the impact of sparsity for \textit{infinite-dimensional} inverse problems.
One aspect of this is to work in a gridless setting, replacing the discrete $\ell^1$ norm with its continuous counterpart, the total variation of Radon measures. This approach was proposed independently by several authors \cite{candes2014towards,Bredies-space-measures,de2012exact,bhaskar2013atomic}, and substantial mathematical insight was provided in \cite{candes2014towards}.

In this article, we continue this line of investigation and explore the use of total variation minimization for 
recovering the parameters of a superposition of point sources when restricted to sampling along radial lines of its Fourier transform. 
Our analysis reveals that the full total variation minimization problem can be solved by considering a sequence of univariate minimization problems. Utilising this observation, we describe an algorithm for solving the total variation minimization problem by solving a sequence of univariate semi-definite programs. Our approach is infinite dimensional in the sense that it allows for the recovery of the point sources without resorting to computations on a discrete grid. On the theoretical level, we provide sufficient conditions on the number of radial lines and the number of samples along these radial lines to guarantee exact reconstruction. Our main results show that in dimension $d$, one can recover the parameters of a superposition of $M$ point sources by sampling its Fourier transform along $d+1$ radial lines. Furthermore, the number of samples we require along each line is, up to $\log$ factors, linear with $M$.

\paragraph{Motivation} The problem of parameter estimation for a superpositions of point sources is rooted in applications such as astronomy, NMR (nuclear magnetic resonance) spectroscopy \cite{holland2011fast,wu2014situ,qu2011reconstruction,wenger2010compressed,kazimierczuk2011accelerated}  and microscopy \cite{puschmann2005super, studer2012compressive}. In these applications, the signal of interest can often be modelled as point sources and
limitations in the hardware mean that one is required to resolve fine details from low frequency data. 
Convex programming techniques for parameter estimation from low frequency measurements date back to the 1980's, with works in NMR spectroscopy \cite{newman1988maximization} and in seismic prospecting \cite{santosa1986linear}. Furthermore, since the publication  \cite{candes2014towards}, this subject has been a topic of intense research within the mathematical community  \cite{candes2013super, tang-mixtures2013,bendory2014robust,bendory2015exact,azais2015spike,duval2015exact,4Boyer_deCastro_Salmon16}. However, these works have focussed on the case where one samples the Fourier transform at Cartesian grid points. On the other hand, physical constraints can sometimes restrict observations to certain angular directions \cite{stewart2013high,stewart2015high}, and in the case of NMR spectroscopy, one is required to sample along continuous trajectories such as radial lines. In this article, we consider this problem under the additional constraint that one can only sample along radial lines in the Fourier domain.

\subsection{Problem statement}
\subsubsection{Acquisition model}
Let $d\in \bbN$ with $d\geq 2$ and let $\domain=\overline{B}(0,1/2)\subset \bbR^d$ be the centered closed ball with radius $1/2$. We denote by $\bbS^{d-1}$ the sphere embedded in $\bbR^d$ and by $\cM(\domain)$ the space of Radon measures with support in $\domain$. For $\theta\in \bbS^{d-1}$, let $\ell_\theta := \{ t \theta : t\in \bbR\}$ be the radial line directed by $\theta$ and $\projU$ the orthogonal projection onto $\ell_\theta$. Given $x\in \domain$, let $\delta_x$ denote the Dirac measure at $x$. The Fourier transform of $\mu\in\cM(\domain)$ at $\xi \in \bbR^d$ is defined by
$$
\cF \mu (\xi) = \int_{\bbR^d} e^{-i 2\pi \ip{\xi}{x}}\mu(\mathrm{d}x).
$$
Throughout this paper, given a finite set $E$, $\abs{E}$ denotes its cardinality.

In this article, we are interested in the recovery of a discrete measure $\mu_0 = \sum_{j=1}^M a_j \delta_{x_j}$ where $\{a_j\}_{j=1}^M\subset \bbC$ and $\{ x_j\}_{j=1}^M \subset \domain$, given  $\cfreq$ samples of its Fourier transform along $L$ radial lines.
More precisely, for $N\in \bbN$, let $\setfreq \subset \{ -N, \cdots, N\}$ with $\cfreq = \abs{\setfreq}$, and let $\setdir \subset \bbS^{d-1}$ with $\cdir = \abs{\setdir}$.   Then, the given observation is a vector  $y_0 := \Phi \mu_0 \in \bbC^{\cfreq\times \cdir}$, where 
\be{\label{eq:samp_op}
\Phi: \cM(\domain) \to \bbC^{\cfreq\times \cdir}, \qquad   \Phi \mu :=  \left(\cF \mu (k\theta)\right)_{(k,\theta)\in \setfreq\times\setdir}.
}
Note that the Fourier transform of $\mu_0$  at $\xi\in \bbR^d$ is equal to
$
\cF \mu_0(\xi) = \sum_{j=1}^M a_j e^{i2\pi \ip{\xi}{x_j}}.
$

\subsubsection{Total variation minimization}
Given the observation $y_0=\Phi\mu_0\in \bbC^{\cfreq\times \cdir}$ described above, in this article, we consider the solutions to the minimization problem
\be{\label{eq:tvmin}
  \min_{\mu \in \cM(\domain)} \br{\norm{\mu}_{TV} : \Phi \mu = y_0} \tag{$\cP_0(y_0)$}
}
where the total variation norm $\norm{\cdot}_{TV}$  is defined by
$$
\norm{\mu}_{TV} = \sup\br{\Re \mip{f}{\mu} : f\in \Cder{}(\domain), \norm{f}_\infty \leq 1},
$$
where $\Cder{}(\domain)=\Cder{}(\domain,\bbC)$ \revision{is the space of complex-valued continuous functions defined on $\domain$,} and $\mip{f}{\mu}:=\int_{\bbR^d}\bar{f}{\mathrm{d}\mu}$. 
In the case of a discrete measure $\mu_0 = \sum_{j=1}^M a_j \delta_{x_j}$, its total variation amounts to 
$  \norm{\mu_0}_{TV} = \sum_{j=1}^M\abs{a_j}.
$ So, in the discrete setting \eqref{eq:tvmin} is simply an $\ell^1$ minimization problem.

Our contributions in this article are two-fold: First, by considering the dual problem of \eqref{eq:tvmin}, we observe that in certain cases, one can decompose our multi-variate minimization problem into a sequence of univariate minimization problems. This naturally leads to semi-definite programming approach for the computation of minimizers of  \eqref{eq:tvmin}. Secondly, by analysing the conditions under which the proposed method successfully recovers solutions of \eqref{eq:tvmin}, we are led to a theoretical understanding of the solutions  of \eqref{eq:tvmin}. In particular, our main theorems will address the following questions:
\begin{enumerate}
\item[(Q1)] How many radial lines should one sample along?
\item[(Q2)] How many samples should one observe along each line?
\end{enumerate}

\paragraph{Overview}
In Section \ref{sec:sdp}, we derive our numerical algorithm, and in doing so, reveal some key properties which we would analyse for the derivation of the recoverability conditions for \eqref{eq:tvmin}. In Section \ref{sec:theorems}, we present the  main theoretical results, which address (Q1) and (Q2) on how one should sample to guarantee the uniqueness of solutions of \eqref{eq:tvmin}, and also when these solutions can be computed via our numerical algorithm. Numerical results are presented in Section \ref{sec:numerics}. \rev{Although this paper is devoted to the study of the noiseless problem \eqref{eq:tvmin}, we describe in Section \ref{sec:noise} how our algorithm can be extended to handle the noisy setting.}  Section \ref{sec:literature} gives an overview of other related works and also possible future directions for the present work. Finally, the proofs of our main results are presented in Section \ref{sec:proofs}.

\section{The numerical algorithm}\label{sec:sdp}

%

In this section, we describe  a gridless computational  approach to \eqref{eq:tvmin}. One of the key tools of this section was discovered in \cite{candes2014towards}, where the authors showed that a univariate total variation minimization problem (with Fourier sampling at grid points)  can be formulated as a semi-definite programming problem (SDP). The SDP approach allows for the computation of solutions to an infinite dimensional optimization problem without resorting to computations on a discrete grid. Utilising this link with SDP, we describe how, in certain cases, \eqref{eq:tvmin} can be solved via a sequence of univariate SDP's. We begin this section by recalling some facts about the dual formulation of \eqref{eq:tvmin}.

\subsection{The dual formulation} \label{sec:dual_form}

Given $y_0=\Phi \mu_0$ with $\mu_0\in\cM(\domain)$, the dual problem\footnote{For all aspects of convex optimization, duality, subdifferential\ldots{} we shall regard a complex valued measure $m$ as a real vector-valued measure. We rely on the duality between $\Cder{}(\domain,\bbR^2)$ and $\cM(\domain,\bbR^2)$ and we apply the results in~\cite{EkelandTemam} for real locally convex vector spaces.} of \eqref{eq:tvmin} is
\be{\label{eq:dual_tv}
  \sup \left\{\Re\ip{q}{y_0} : q\in \bbC^{\revision{\cfreq\times \cdir}} \quad \norm{\Phi^*q}_\infty \leq 1 \right\}, \tag{$\cD_0(y_0)$}
}
where given $q,y\in\bbC^{T\times L}$, $\ip{q}{y}:=\sum_{(k, \theta)\in \setfreq\times \setdir} {y}_{k,\theta}{\bar{q}_{k,\theta}}$.
Observe that when endowed with the weak-* topology, the dual of the space $\cM(\domain)$ is $\Cder{}(\domain)$, and 
the adjoint operator $\Phi^*$ of $\Phi$ is given by 
\begin{equation}\label{eq:phistar}
\Phi^*:   \bbC^{\cfreq\times \cdir} \to \Cder{}(\domain), \quad q\in \bbC^{\cfreq\times \cdir}  \mapsto \sum_{(k, \theta)\in \setfreq\times \setdir}  q_{\theta, k} e^{i2\pi k\ip{\theta}{\cdot}}.
\end{equation}
 Moreover, one can prove the  existence of solutions to both ~\eqref{eq:tvmin} and~\eqref{eq:dual_tv}, and that strong duality holds, that is $\eqref{eq:tvmin}=\eqref{eq:dual_tv}$. The primal and dual problems are related such that   $\mu\in \cM(\domain)$ solves \eqref{eq:tvmin} and $q\in  \bbC^{\cfreq\times \cdir}$ solves \eqref{eq:dual_tv} if and only if
\be{\label{eq:optcdt}%
\Phi \mu = y_0, \quad \text{and}
\quad 
\Phi^* q \in \br{f\in \Cder{}(X): \norm{f}_{\infty}\leq 1, \quad \ip{f}{\mu} = \norm{\mu}_{TV}}.
}
For a discrete measure $\mu_0=\sum_{j=1}^M a_j \delta_{x_j}$,  if there exists  $q \in \bbC^{\cfreq\times \cdir}$ such that
 \be{\label{toshow_unique}
  \Phi^*q(x_j) = \sgn(a_j):=\frac{a_j}{\abs{a_j}}, \quad \forall j\in \{1, \ldots, M\}, \qquad  \norm{\Phi^*q  }_\infty \leq 1, 
}
and the extremal points of $\Phi^* q$ form a finite set $E:=\br{x: \abs{\Phi^* q(x) }= 1}$ such that
the map $b \in \bbC^{\abs{E}} \mapsto \Phi\left(\sum_{x\in E}^{} b_x \delta_{x}\right)$  is injective,
then $\mu_0$ is the unique solution of \eqref{eq:tvmin}. This result is essentially proved in ~\cite[Lem. 1.1]{de2012exact} where $\sgn(a_j)$ are real numbers, note however, that a similar result for complex numbers is also proved in \cite[Prop. A.1]{candes2014towards}, and our conditions in fact imply the conditions of \cite[Prop. A.1]{candes2014towards}.
Since finding a vector $q$ which satisfies~\eqref{toshow_unique} guarantees that $\mu_0$ is a solution, we shall call $\Phi^*q$ a dual certificate.

\subsection{Splitting the dual problem}

For each $\theta\in\setdir$, let $y_\theta := \left(\cF \mu_0(k\theta )\right)_{ k\in\setfreq}$, and let $\subsetdir\subseteq \Theta$ be any subset of cardinality $\cdir'\leq \cdir$. Instead of \eqref{eq:dual_tv}, let us consider the following optimization problem:
\begin{align}
 \sup&\enscond{\frac{1}{L'}\sum_{\theta\in \subsetdir}\Re\ip{c_\theta}{y_\theta}}{\forall \theta\in\subsetdir,\ c_\theta\in \bbC^{\cfreq}, \mbox{ and } \sup_{x\in \domain}\abs{\sum_{k\in \setfreq}  c_{\theta, k} e^{i2\pi k\ip{\theta}{x}}}\leq 1}\nonumber\\
                         &= \frac{1}{L'}\sum_{\theta\in \subsetdir}\sup\enscond{\Re\ip{c_\theta}{y_\theta}}{c_\theta\in \bbC^{\cfreq}, \mbox{ and }\sup_{x\in \domain}\abs{\sum_{k\in \setfreq}  c_{\theta, k} e^{i2\pi k\ip{\theta}{x}}}\leq 1}.\label{eq:dualsplit2} \tag{$\tilde{\cD}_0(y_0)$}
\end{align}
\revision{Given any family $(c_\theta)_{\theta\in\subsetdir}$ admissible for~\eqref{eq:dualsplit2}, we may construct $q\in \bbC^{\cdir\times\cfreq}$ with
  \begin{align}\label{eq:defq}
  q_{\theta,k}= \begin{cases}
    \frac{c_{\theta, k}}{L'} &\ \mbox{for $\theta\in\subsetdir$},\\
    0 & \ \mbox{for $\theta\in\setdir\setminus\subsetdir$}.
  \end{cases}
\end{align}
and we see that $\norm{\Phi^*q}_{\infty}\leq 1$ and $q$ is admissible for~\eqref{eq:dual_tv}, with $\Re\ip{q}{y_0}=\frac{1}{L'}\sum_{\theta\in \subsetdir}\Re\ip{c_\theta}{y_\theta}$. As a result $\eqref{eq:dualsplit2}\leq \eqref{eq:dual_tv}\leq \eqref{eq:tvmin}$.

Now, suppose that  for each $\theta\in \subsetdir$, there exists $c_\theta \in \bbC^T$ such that
\be{\label{eq:assump1}
 p_\theta: t \mapsto \sum_{k\in \setfreq}  c_{\theta, k} e^{i2\pi k t } \mbox{ satisfies } 
 \norm{p_\theta}_\infty\leq 1, \quad \mbox{ and } p_\theta(\ip{\theta}{x_j}) = \sgn(a_j), \quad  \forall j\in\{1, \ldots, M\}.
}
Then, $q$ defined as in~\eqref{eq:defq} is admissible for~\eqref{eq:dual_tv} and satisfies $(\Phi^*q)(x_j) = \sgn(a_j)$ for all $j\in\{1,\ldots, M\}$.

As a result, 
\begin{align*}
  \frac{1}{L'}\sum_{\theta\in \subsetdir}\Re\ip{c_\theta}{y_\theta}=\Re\ip{q}{y_0}=\ip{\Phi^*q}{\mu_0}=\sum_{j=1}^M\abs{a_j}=\norm{\mu_0}_{TV},
\end{align*}
and we deduce that $\eqref{eq:dualsplit2}=\eqref{eq:dual_tv}=\eqref{eq:tvmin}$ and that $\mu_0$ is optimal for~\eqref{eq:tvmin}.
Incidentally, each $c_\theta$ maximizes the corresponding summand in \eqref{eq:dualsplit2}.
}

To see why this observation is useful, we recall an observation from \cite{candes2014towards} concerning the solution to the dual problem in the univariate case:
A univariate trigonometric polynomial satisfies
$\norm{\sum_{j\in\setfreq} c_{j} e^{i2\pi j \cdot}}_\infty\leq 1$ with $\setfreq\subset \br{-N, \ldots, N}$ if and only if there exists a Hermitian matrix $Q\in \bbC^{ N\times  N}$ such that
$$
\begin{bmatrix}
Q &c\\
c^* & 1
\end{bmatrix} \succeq 0, \qquad \sum_{i=1}^{N-j}Q_{i,i+j}=\begin{cases}
1 &j=0\\
0 &j=1,2,\ldots, N-1
\end{cases}, \qquad c_{\setfreq^c} = 0,
$$
where $c_{\setfreq^c}$ is the restriction of $c$ to coefficients indexed by $\setfreq^c$. So, the dual problem
\begin{align*}
&\sup_{ c\in\bbC^N}\br{ \Re \ip{c}{y}: p = \sum_{j\in\setfreq} c_{j} e^{i2\pi j \cdot}, \norm{p}_\infty\leq 1}
\end{align*}
is equivalent to
\begin{align*}
\sup_{Q\in \bbC^{N\times N}, ~c\in\bbC^N} \br{\Re \ip{c}{y}: 
\begin{bmatrix}
Q &c\\
c^* & 1
\end{bmatrix} \succeq 0, \quad \sum_{i=1}^{N-j}Q_{i,i+j}=\begin{cases}
1 &j=0\\
0 &j=1,\ldots, N-1
\end{cases}, \quad c_{\setfreq^c} = 0
}.
\end{align*}
Crucially, the latter equation can be solved using semi-definite programming (SDP).

From this observation, it is clear that since  \eqref{eq:dualsplit2} is formulated in terms of $\cdir'$ univariate trigonometric polynomials,   \eqref{eq:dualsplit2}  can be solved via  a sequence of univariate SDPs.
More precisely, each  summand in \eqref{eq:dualsplit2} can be rewritten as
\eas{
  \sup_{Q\in \bbC^{N\times N},~ c\in \bbC^N}\left\{ \Re  \ip{c}{y_\theta} : \begin{bmatrix}
Q &c\\
c^* & 1
\end{bmatrix} \succeq 0, \qquad \sum_{i=1}^{N-j}Q_{i,i+j}=\begin{cases}
1 &j=0\\
0 &j=1,\ldots, N-1
\end{cases}, \quad c_{\setfreq^c} = 0
\right\}.
}
Under the assumption \eqref{eq:assump1}, for each $\theta\in \subsetdir$, $( \ip{x_j}{\theta})_{j=1}^M$ is contained in the extremal points of the trigonometric polynomial
 $
p_\theta$, which we denote by $\cT_\theta$.
In particular, for all $x\in \{x_j:j=1,\ldots,M\}$ and  all $\theta\in \subsetdir$, there exists $t\in \cT_\theta$ such that $\ip{x}{\theta} = t$, i.e. $x\in t\theta +\ell_\theta^\perp$. So, 
\be{\label{eq:Delta_bar}
\supsat_\subsetdir  := \bigcap_{\theta\in \subsetdir} \bigcup_{t \in \cT_\theta} \left( t \theta + \ell_\theta^\perp  \right) \supset \{x_j:j=1,\ldots, M\}.
}
In the following, we write $\supsat_{\subsetdir} =\br{\supx_j:~ j=1,\ldots, \supM}$ provided $\supsat_{\subsetdir}$ is finite.
In that case, assuming that the operator 
\be{\label{eq:A2}
A_{\Theta,\Gamma,\supsat_{\subsetdir}}:  a \in \bbC^{\supM}\mapsto  \left(\sum_{j=1}^{\supM} a_{j} e^{-i2\pi k\ip{\theta}{\supx_j}}\right)_{\theta\in\setdir, k\in\setfreq}  \in   \bbC^{T\times L} \qquad \text{is injective},
}
 the solution $(\supa_j)_{j=1}^{\supM}$ to the linear system $A_{\Theta,\Gamma,\supsat_{\subsetdir}} \supa = y_0$ will satisfy
$
\sum_{j=1}^{\supM} \supa_j \delta_{\supx_j} =\mu_0.
$

This discussion suggests the following  reconstruction procedure:
\begin{framed}
\textbf{Algorithm} \algname:

Let $L'\leq L$ be some parameter chosen by the user.
\begin{enumerate}
\item For each $\theta \in \Theta$, via SDP, compute the maximizer $c_\theta$ of
\eas{
\sup\left\{ \Re  \ip{y_\theta}{c} : \begin{bmatrix}
Q &c\\
c^* & 1
\end{bmatrix} \succeq 0, \qquad \sum_{i=1}^{N-j}Q_{i,i+j}=\begin{cases}
1 &j=0\\
0 &j=1,\ldots, N-1
\end{cases}, \quad c_{\setfreq^c} = 0
\right\}.}
Let $\cT_\theta$ be the extremal values of the trigonometric polynomial
$$
p_\theta = \sum_{k\in\Gamma} c_{\theta,k} e^{\revision{2\imath}\pi k\cdot}.
$$
The values are  found by constructing the companion matrix of the associated algebraic polynomial\footnotemark{}  and finding its eigenvalues with absolute value within a threshold of 1 (in our experiments, we choose a threshold of $10^{-4}$). 
\item For each subset $\subsetdir\subset \Theta$ of cardinality $L'$, let $\supsat_\subsetdir$ be as defined in \eqref{eq:Delta_bar}, and $A_{\Theta,\Gamma,\supsat_{\subsetdir}}$ be as defined in \eqref{eq:A2}. If $\supsat_{\subsetdir}$ is a discrete set $\{\supx_j: j=1,\ldots,\supM\}$ and $A_{\Theta,\Gamma,\supsat_{\subsetdir}}$ is injective, then recover $a_\subsetdir$ as the solution to the  linear system $A_{\Theta,\Gamma,\supsat_{\subsetdir}} a_\subsetdir = y_0$, and let 
$$
\mu_\subsetdir = \sum_{j=1}^{\supM} a_{\subsetdir,j} \delta_{\supx_j}.
$$ 
If  $\sgn(a_{\Theta'})_j = \frac{1}{\abs{\Theta'}} \sum_{\theta\in\Theta'} p_\theta(\ip{x_j}{\theta})$ for all $j\in \br{1,\ldots, \tilde M}$, then return $\mu_\subsetdir$.
\end{enumerate}

\end{framed}
\footnotetext{As in~\cite{candes2014towards}, we associate to each trigonometric polynomial $t\mapsto \sum_{-N\leq k\leq N}c_ke^{2i\pi kt}$ the polynomial $1-X^{2N}\sum_{-N\leq k,j\leq N}c_k\overline{c}_jX^{k-j}$. Its roots on the unit circle yield the points where the trigonometric polynomial reaches $1$ in modulus.
}
\revision{If the algorithm finishes step 2  without having returned any measure, it means that~\eqref{eq:tvmin} cannot be solved using the proposed splitting approach (with this choice of $L'$). The rest of this paper is devoted to showing that the splitting approach succeeds in many practical cases.}
Note that when $L'<L$, the above algorithm may return several measures, however, the last assertion in Step 2 above ensures that each one of these measures is a minimizer of \eqref{eq:tvmin}. Furthermore, by collecting the assumptions in the discussion above, the algorithm \algname  ~returns precisely $\mu_0$  provided that the following two conditions are satisfied:
\begin{itemize}
\item[(A1)] There exists $\subsetdir\subset\Theta$ of cardinality $L'$ such that for each $\theta\in \subsetdir$, there exists $~c_\theta \in \bbC^T$ such that the trigonometric polynomial $p_\theta = \Phiod^* c_\theta$ satisfies
\bes{
\norm{p_\theta}_\infty\leq 1, \quad p_\theta(\ip{\theta}{x_j}) = \sgn(a_j), \quad \forall j = 1,\ldots, M.
}
\item[(A2)] For the set $\subsetdir$ defined in (A1), $\supsat_{\subsetdir}$ defined in \eqref{eq:Delta_bar}, $A_{\Theta,\Gamma,\supsat_{\subsetdir}}$ is injective.
\end{itemize}

\revision{As mentioned in Section \ref{sec:dual_form}, a discrete measure $\mu_0 = \sum_{j=1}^M a_j \delta_{x_j}$ with $\Phi \mu = y_0$ is a solution to \eqref{eq:tvmin} if there exists a dual certificate $q \in \bbC^{\cfreq\times \cdir}$ such that $\Phi^*q(x_j) = \sgn(a_j)$ and $\norm{\Phi^*q}_\infty\leq 1$. Since Algorithm \algname  ~describes how to construct such dual certificate}, we have in fact derived the following statement:
\begin{proposition}\label{prop:split}
If (A1) and (A2) hold, then $\mu_0$ is the unique solution to \eqref{eq:tvmin} and is recovered by Algorithm \algname. 
\end{proposition}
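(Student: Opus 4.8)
The plan is to establish the two assertions separately, both resting on the dual certificate $q$ produced from (A1) through formula~\eqref{eq:defq} and on the uniqueness criterion recalled after~\eqref{toshow_unique}. First I would prove that $\mu_0$ is the unique minimizer of~\eqref{eq:tvmin}, and then that Algorithm \algname{} reconstructs it.

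For the uniqueness part, I would start from the family $(c_\theta)_{\theta\in\subsetdir}$ furnished by (A1) and set $q$ as in~\eqref{eq:defq}. The discussion preceding the proposition already shows that $q$ is admissible for~\eqref{eq:dual_tv}, that $\norm{\Phi^*q}_\infty\leq 1$, and that $(\Phi^*q)(x_j)=\sgn(a_j)$ for all $j$; thus two of the three hypotheses of the uniqueness criterion are in place, and it only remains to control the extremal set $E=\{x\in\domain:\abs{\Phi^*q(x)}=1\}$. Writing $\Phi^*q(x)=\frac{1}{L'}\sum_{\theta\in\subsetdir}p_\theta(\ip{\theta}{x})$ with each $\norm{p_\theta}_\infty\leq 1$, the triangle inequality gives $\abs{\Phi^*q(x)}\leq 1$, and equality of a normalized sum of $L'$ complex numbers of modulus at most $1$ forces every summand to have modulus exactly $1$. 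Hence $x\in E$ implies $\ip{\theta}{x}\in\cT_\theta$ for each $\theta\in\subsetdir$, i.e.\ $x\in\supsat_{\subsetdir}$, so $E\subseteq\supsat_{\subsetdir}$. By (A2) the operator $A_{\Theta,\Gamma,\supsat_{\subsetdir}}$ of~\eqref{eq:A2} is injective, and since $E$ indexes a subcollection of its columns, the map $b\in\bbC^{\abs{E}}\mapsto\Phi\!\left(\sum_{x\in E}b_x\delta_x\right)$ is a restriction of an injective linear map and is therefore injective as well. All three conditions of the criterion hold, so $\mu_0$ is the unique solution of~\eqref{eq:tvmin}.

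For the algorithm, I would argue that the objects computed for the subset $\subsetdir$ of (A1) agree with those above. Step~1 returns, for each $\theta$, a maximizer $c_\theta$ of the univariate dual SDP together with its extremal set $\cT_\theta$. The step I expect to be the main obstacle is that this SDP maximizer need not coincide with the particular certificate asserted in (A1): the two polynomials $p_\theta$ may differ, and their extremal sets with them. The way through is duality for the univariate problem. Condition (A1) certifies that the projected measure $\sum_j a_j\delta_{\ip{\theta}{x_j}}$ is optimal for the univariate total variation problem associated with $y_\theta$; strong duality and complementary slackness then force \emph{every} univariate dual maximizer, in particular the one produced by the SDP, to satisfy $p_\theta(\ip{\theta}{x_j})=\sgn(a_j)$. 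Consequently $\ip{\theta}{x_j}\in\cT_\theta$ for all $j$ and $\theta\in\subsetdir$, so $\{x_j\}\subseteq\supsat_{\subsetdir}$ as in~\eqref{eq:Delta_bar}.

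Finally, in Step~2 applied to $\subsetdir$, the set $\supsat_{\subsetdir}$ is finite and $A_{\Theta,\Gamma,\supsat_{\subsetdir}}$ is injective by (A2). Because $\{x_j\}\subseteq\supsat_{\subsetdir}$, the measure $\mu_0$ may be written with support in $\supsat_{\subsetdir}$ (putting mass $a_j$ at $x_j$ and $0$ at the remaining points), so its coefficient vector solves $A_{\Theta,\Gamma,\supsat_{\subsetdir}}\,a=y_0$; injectivity makes this solution unique, whence $\mu_\subsetdir=\mu_0$. The sign test of Step~2 then passes, since at each atom $x_j$ one has $\frac{1}{L'}\sum_{\theta\in\subsetdir}p_\theta(\ip{\theta}{x_j})=\sgn(a_j)$, and the algorithm returns $\mu_\subsetdir=\mu_0$. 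I would keep the remark that beyond the triangle-inequality characterization of $E$, the only delicate point is the duality argument linking the computed maximizer to the assumed certificate; the rest is bookkeeping built on the construction~\eqref{eq:defq} and the injectivity hypothesis (A2).
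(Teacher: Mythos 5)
Your proof is correct and follows the same route as the paper, which essentially just collects the discussion preceding the proposition: the certificate $q$ of \eqref{eq:defq} built from (A1), the uniqueness criterion recalled after \eqref{toshow_unique}, and the injectivity supplied by (A2). You are in fact more careful than the paper on the two points it leaves implicit — the equality case of the triangle inequality showing that the saturation set $E$ of $\Phi^*q$ is contained in $\supsat_{\subsetdir}$ (so that (A2) yields the injectivity required by the uniqueness criterion), and the strong-duality/complementary-slackness argument guaranteeing that the SDP's computed maximizer, which need not coincide with the certificate asserted in (A1), still interpolates $\sgn(a_j)$ at the projected atoms — and both of these additions are sound.
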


\begin{remark}
Note that by choosing $L'<L$, it is easier for the conditions (A1) and (A2) to be satisfied, however, there is an increased computational cost in Step 2. Although we do not theoretically study the case $L'<L$, we present a numerical example in Section~\ref{sec:numerics} to demonstrate one situation where it may be advantageous to let $L'<L$.
\end{remark}

In the next section, we present some results describing how one may choose $\Theta$ and $\Gamma$ such that (A1) and (A2) are satisfied with $L' = L=d+1$. 

\section{Theoretical results}\label{sec:theorems}

Before presenting our main results which provide conditions under which (A1) and (A2) are satisfied,
let us recall some of the existing results on total variation minimization.

\paragraph{Existing results on total variation minimization}

A significant mathematical breakthrough in the understanding of the total variation minimization problem is \cite{candes2014towards}, where the authors  derived precise conditions under which one can exactly solve the parameter estimation problem for $M$ point sources.  For the purpose of analysing  the problem of  recovering point sources from samples of the Fourier transform at grid points, the key notion introduced in \cite{candes2014towards} is  the minimum separation distance.

\begin{definition}
Let $\bbT$ denote the one-dimensional torus.
Given any discrete set $\Delta \subset \bbT$, let the minimum separation distance of $\Delta$ be defined by
$$
\mdist(\Delta) = \min_{t,t'\in \Delta, t\neq t'}\dt{t-t'}.
$$
where $\dt{t-t'}$ is the canonical distance on the torus between $t$ and $t'$.
\end{definition}
 Their main result shows that one is guaranteed exact recovery provided that the sampling range is inversely proportional to the minimum separation distance between the positions of the point sources. 
 More precisely, it suffices to sample Fourier coefficients with frequency no greater than $f_c$ provided that the positions of the point sources are separated by a distance of at least $\ord{1/f_c}$.  Explicit constants are given in the one and two variate setting, see \cite{candes2014towards} for further details and~\cite{fernandez2016super} for improved constants. Moreover, on the practical side, the authors presented a numerical algorithm based on semi-definite programming which allows for the computation of the minimizer without resorting to discrete grids.

 The theoretical result of \cite{candes2014towards} is extended to the probabilistic framework in \cite{tang2013compressed} where it is shown that one can in fact guarantee exact recovery with high probability by subsampling at random the Fourier coefficients with frequency no greater than $f_c$, at a rate which is up to log factors linear with sparsity. Several other variants have been proposed, replacing the Dirichlet kernel with more general kernels such as the Cauchy or Gaussian kernel~\cite{tang-mixtures2013,bendory2014robust}, or extending the framework to spherical domains~\cite{bendory2015exact}. The robustness to noise of such methods is investigated in~\cite{candes2013super,azais2015spike,duval2015exact,4Boyer_deCastro_Salmon16}.

In the remainder of this section, we present our main results, which can be seen as the analogy of the results of \cite{candes2014towards} and \cite{tang2013compressed} in the case of sampling the Fourier transform along radial lines.

\subsection{Main result I}

The key notion in our setting is the minimum separation distance between the \textit{projected} positions. In particular, if one is restricted to sampling along the directions specified by some set $\Theta\subset \bbS^{d-1}$, then along each line, one should observe the Fourier samples up to frequency $N$, which is  inversely proportional to
$
\inf_{\theta\in \Theta} \mdist(\setptheta)$ where $
 \setptheta = \{ \ip{\theta}{x_j}\}_{j=1}^M.
$
Now, as we show in Lemma \ref{lem:holdae}, there are in fact only a finite number of directions $\Theta$ for which this value is zero, so it is finite for almost every choice of $\Theta$. However, it may become arbitrarily small depending on the choice of the set $\Theta$. Understanding this minimum separation distance forms a substantial part of the analysis in this work, and in our first result, we make precise the dependence between the minimum separation distance of the projected positions of the underlying point sources and the sampling range. Furthermore, by allowing for random sampling along radial lines, provided that we sample from a `good' range of angles based on some a-priori knowledge on the signal, up to log factors, one can guarantee exact recovery with high probability using total variation minimization from $\ord{(d+1)M}$ samples.

\begin{theorem}\label{thm:rand_angles}
Let $\mu_0 = \sum_{j=1}^M  a_j \delta_{x_j}$ where $\{x_j\}_{j=1}^M \subset \domain$ consists of fixed distinct points.
Let $S\subset \bbS^{d-1}$ be a set of non-zero measure  such that
$$
\nu_{\min}:= \inf_{\theta\in S} \mdist(\setptheta) >0,
$$
where $\setptheta = \{ \ip{\theta}{x_j}\}_{j=1}^M$.
Let  $\setdir$ be a set of $d+1$ distinct elements drawn uniformly at random from $S$ and let $N = \lceil 2/\nu_{\min} \rceil$. Then, the following holds:
\begin{enumerate}
\item  If $\setfreq = \{-N,\ldots, N\}$, then $\mu_0$ is the unique solution to \eqref{eq:tvmin} and it can be recovered by Algorithm~\algname.
\item If $\setfreq$ consists of $m$ indices drawn uniformly at random from $\{-N,\ldots, N\}$, where 
$$
m\gtrsim \max \{\log^2(N/\delta), M\log(M/\delta)\log(N/\delta)  \},
$$
 and  $\{\sgn(a_j)\}_{j=1}^M$ are  drawn i.i.d. from the uniform
distribution on the complex unit circle, 
then with probability exceeding $1-(d+1)\delta$, 
 $\mu_0$ is the unique solution to \eqref{eq:tvmin} and it can be recovered by Algorithm~\algname.
\end{enumerate}

\end{theorem}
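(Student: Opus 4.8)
The plan is to reduce everything to Proposition~\ref{prop:split}: it suffices to verify that conditions (A1) and (A2) hold for the splitting with $L'=L=d+1$ and $\subsetdir=\setdir$, after which the uniqueness of $\mu_0$ and its recovery by Algorithm~\algname\ follow. Accordingly, I would build, for each of the $d+1$ sampled directions, a univariate dual certificate (this is (A1)), and then show that the multidimensional support set of \eqref{eq:Delta_bar} collapses to exactly $\{x_j\}_{j=1}^M$ and that $A_{\Theta,\Gamma,\supsat_{\setdir}}$ is injective (this is (A2)).

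For (A1), fix $\theta\in\setdir$. Since $\theta\in S$, the projected positions $\setptheta=\{\ip{\theta}{x_j}\}_{j=1}^M$ are pairwise separated on the torus by at least $\nu_{\min}$, and $N=\lceil 2/\nu_{\min}\rceil$ is calibrated so that $\mdist(\setptheta)\geq 2/N$, which is exactly the separation threshold of the univariate theory. In part~1, with $\setfreq=\{-N,\ldots,N\}$, I would invoke the deterministic construction of \cite{candes2014towards} to obtain a trigonometric polynomial $p_\theta$ supported on $\setfreq$ with $\norm{p_\theta}_\infty\leq 1$, $p_\theta(\ip{\theta}{x_j})=\sgn(a_j)$, and $\abs{p_\theta(t)}<1$ off the nodes, so that its extremal set is precisely $\cT_\theta=\setptheta$. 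In part~2, with $\setfreq$ a random set of $m$ frequencies and the signs i.i.d.\ uniform on the unit circle, I would instead invoke the random-frequency result of \cite{tang2013compressed}, whose hypothesis is exactly the stated bound on $m$: the analogous certificate exists with probability at least $1-\delta$. A union bound over the $d+1$ directions then secures (A1) with probability at least $1-(d+1)\delta$.

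For (A2) I first identify $\supsat_{\setdir}$. Since each certificate has extremal set $\cT_\theta=\setptheta$, \eqref{eq:Delta_bar} reads $\supsat_{\setdir}=\bigcap_{\theta\in\setdir}\bigcup_{j}\left(\ip{\theta}{x_j}\,\theta+\ell_\theta^\perp\right)$, i.e.\ the set of $x$ with $\ip{\theta}{x}\in\setptheta$ for every sampled $\theta$. This is the step I expect to be the main obstacle, and it is where the randomness over angles is essential. Almost surely any $d$ of the $d+1$ directions form a basis, so fixing those $d$ directions and a tuple of node indices determines at most $M^d$ candidate points $x^\star$; a candidate distinct from every true $x_j$ is a genuine ghost, and it survives only if $\theta_{d+1}\perp(x^\star-x_{j'})$ for some $j'$. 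Because $x^\star$ is measurable with respect to the first $d$ directions alone, conditioning on them makes $x^\star-x_{j'}$ a fixed nonzero vector, and the sampling law on $S$ assigns zero mass to the great subsphere orthogonal to it; a union over the finitely many index tuples then gives $\supsat_{\setdir}=\{x_j\}_{j=1}^M$ almost surely. This is exactly why $d+1$ rather than $d$ directions are required: with only $d$ directions the ghosts generically persist.

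It remains to check injectivity of $A_{\Theta,\Gamma,\supsat_{\setdir}}$. With $\supsat_{\setdir}=\{x_j\}$, and since $x_j\mapsto\ip{\theta}{x_j}$ is a bijection onto the $M$ distinct nodes along any single direction, injectivity of $A$ follows from injectivity of the one-dimensional Fourier sampling restricted to those nodes. In part~1 this is immediate from the full-band Vandermonde structure, using $2N+1\geq M$ (which holds because $\nu_{\min}\leq 1/(M-1)$ forces $N\geq 2(M-1)$); in part~2 it is furnished within the same $1-\delta$ event of \cite{tang2013compressed}, whose recovery guarantee already distinguishes measures on $M$ separated nodes from the $m\geq M$ random samples. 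Collecting the estimates, (A1) and (A2) both hold with probability at least $1-(d+1)\delta$, the almost-sure angle event contributing no additional failure, and Proposition~\ref{prop:split} yields the claim.
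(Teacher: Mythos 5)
Your proposal is correct and follows essentially the same route as the paper: reduce to Proposition~\ref{prop:split}, verify (A1) via the univariate certificates of Lemma~\ref{lem:dual_poly_1d} (with a union bound over the $d+1$ directions), and verify (A2) via the almost-sure ghost-elimination argument plus Vandermonde/random injectivity. The only difference is that you inline a proof of the ghost-elimination step, which in the paper is exactly the content of Lemma~\ref{lem:holdae}(ii).
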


\begin{remark}
As in \cite{tang2013compressed}, we need to assume in 2 that $\{\sgn(a_j)\}_{j=1}^M$ are  drawn i.i.d. from the uniform distribution on the complex unit circle. It is likely that this is simply an artefact of proof techniques rather than an actual requirement.

\end{remark}

\subsection{Main result II: Recovery with fixed sampling directions}

The previous result considered the recovery of some fixed signal from Fourier samples along a random selection of radial lines. In this subsection, we consider the case where the radial lines which we sample along are fixed, but the positions of the point sources are distributed uniformly in space.

The following result describes when the algorithm presented in this article succeeds with $L=L'=d+1$. Under this setting, the sampling range along each line is $\ord{M^2}$. Furthermore, for exact recovery, it suffices to randomly draw $\ord{M\log^2 M}$  Fourier samples along the $d+1$ prescribed radial lines.  

\begin{theorem}\label{thm2}
Let $\mu_0 = \sum_{j=1}^M a_j \delta_{x_j}$ where  the positions $(x_j)_{j=1}^M\in \domain$ are i.i.d. random variables with uniform law in $\domain$.  Let $\delta>0$ and let $N \geq \frac{4(d+1)}{\delta \sqrt{\pi(2d-1)}}M(M-1)$ and let $\Theta \subset \bbS^{d-1}$ be a set of $d+1$ fixed distinct angles, of which any subset of cardinality $d$ is linearly independent.
\begin{enumerate}
  \item  If $\setfreq=\{-N, \ldots,N\}$, then with probability at least $1-(d+1)\delta$,  $\mu_0$ is the unique solution of \eqref{eq:tvmin} and can be recovered by Algorithm~\algname.

  \item If $\setfreq$ consists of $m$ indices chosen uniformly at random from $\br{-N,\ldots, N}$ with
$$
m\gtrsim   M\log^2(M/\delta),
$$
and $\{\sgn(a_j)\}_{j=1}^M$ are  drawn i.i.d. from the uniform
distribution on the complex unit circle, then, with probability at least 
 $ 1-(d+1)\delta$, 
 $\mu_0$ is the unique solution of \eqref{eq:tvmin} and can be recovered by Algorithm~\algname.

\end{enumerate}
\end{theorem}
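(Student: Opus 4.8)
The plan is to apply Proposition~\ref{prop:split} with $\subsetdir=\Theta=\setdir$ (so that $L'=L=d+1$): it then suffices to prove that, under the stated hypotheses, conditions (A1) and (A2) both hold with probability at least $1-(d+1)\delta$. Both parts of the theorem rest on the same two ingredients, namely a probabilistic lower bound on the minimum separation of the projected supports $\setptheta=\{\ip{\theta}{x_j}\}_{j=1}^M$, valid simultaneously for all $d+1$ directions, and a geometric argument showing that the set $\supsat_{\setdir}$ of \eqref{eq:Delta_bar} collapses to $\{x_j\}_{j=1}^M$ and that the operator of \eqref{eq:A2} is injective.

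I would first reduce (A1) to the separation statement $\mdist(\setptheta)\ge 2/N$. Indeed, once this holds for a given $\theta$, the univariate certificate of \cite{candes2014towards} supplies a trigonometric polynomial $p_\theta$ with frequencies in $\setfreq=\{-N,\dots,N\}$ satisfying $\norm{p_\theta}_\infty\le 1$, $p_\theta(\ip{\theta}{x_j})=\sgn(a_j)$, and $\abs{p_\theta}<1$ off $\setptheta$; for randomly subsampled $\setfreq$ and i.i.d. uniform signs the analogous random certificate of \cite{tang2013compressed} exists with high probability provided $m\gtrsim M\log^2(M/\delta)$. The heart of the proof is therefore the separation estimate. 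Writing $Y^\theta_j=\ip{\theta}{x_j}$, a coordinate of a uniform point on $\domain$ has density proportional to $(\tfrac14-t^2)^{(d-1)/2}$ on $[-\tfrac12,\tfrac12]$, whose maximum at $t=0$ equals $\rho_0:=\frac{2}{\sqrt{\pi}}\,\frac{\Gamma(d/2+1)}{\Gamma((d+1)/2)}\le \frac{2(d+1)}{\sqrt{\pi(2d-1)}}$. Conditioning on $x_j$, the probability that $Y^\theta_i$ lies within torus-distance $2/N$ of $Y^\theta_j$ is at most $\tfrac{4}{N}\rho_0$, so a union bound over the $\binom{M}{2}$ pairs and the $d+1$ directions bounds the probability that $\mdist(\setptheta)<2/N$ for some $\theta$ by $\tfrac{2(d+1)M(M-1)}{N}\rho_0$, which is at most $(d+1)\delta$ precisely for the stated range of $N$. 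This is the step that fixes the $\ord{M^2}$ sampling range.

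For (A2), I would use that each $p_\theta$ is strictly below $1$ in modulus away from $\setptheta$, so its extremal set is exactly $\cT_\theta=\setptheta$ and $\supsat_{\setdir}=\bigcap_{\theta\in\setdir}\bigcup_{j}\br{\ip{\theta}{x_j}\theta+\ell_\theta^\perp}$ is an intersection of $d+1$ families of parallel hyperplanes. A point of $\supsat_{\setdir}$ corresponds to a selection $\sigma:\setdir\to\{1,\dots,M\}$ with $\ip{\theta}{y}=\ip{\theta}{x_{\sigma(\theta)}}$ for every $\theta$; the constant selections return the true $x_j$, while any non-constant $\sigma$ over-determines $y$ and, because the positions follow a continuous law and any $d$ of the $d+1$ directions span $\bbR^d$, yields an inconsistent system almost surely. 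Hence $\supsat_{\setdir}=\{x_j\}_{j=1}^M$ with probability one, and in particular the projections are pairwise distinct along each direction. Injectivity of the operator in \eqref{eq:A2} then follows from a Vandermonde argument: a kernel element forces $\sum_{j:\,\ip{\theta}{x_j}=s}a_j=0$ for every projected value $s$, hence $a=0$ as soon as the projections are distinct along a single direction (for the full frequency set), and the same holds once $m\gtrsim M$ for the random partial Vandermonde in the subsampled case. These events are almost sure, so (A2) consumes no further failure budget.

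Combining the two estimates, (A1) and (A2) hold with probability at least $1-(d+1)\delta$, and Proposition~\ref{prop:split} yields both assertions. I expect the separation estimate of the second paragraph to be the main obstacle: although the $x_j$ are well separated in $\domain$, their projections can collide, and forcing $\mdist(\setptheta)\ge 2/N$ simultaneously along all $d+1$ lines with the sharp $d$-dependence requires the exact density $(\tfrac14-t^2)^{(d-1)/2}$ together with the bound $\frac{\Gamma(d/2+1)}{\Gamma((d+1)/2)}\le\frac{d+1}{\sqrt{2d-1}}$. A secondary difficulty is excluding spurious points of $\supsat_{\setdir}$, which is exactly where the hypothesis that any $d$ of the $d+1$ directions be linearly independent is used.
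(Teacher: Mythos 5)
Your proposal is correct and follows essentially the same route as the paper: reduce to (A1)--(A2) via Proposition~\ref{prop:split}, establish $\mdist(\setptheta)\ge 2/N$ simultaneously along the $d+1$ directions by a pairwise union bound on the projected density $f_{d,X}(t)\propto(1/4-t^2)^{(d-1)/2}$, invoke the univariate certificates of Lemma~\ref{lem:dual_poly_1d}, and dispose of (A2) via Lemma~\ref{lem:indep}-type genericity plus a Vandermonde argument. The only (harmless) deviation is that you bound the collision probability by $\norm{f_{d,X}}_\infty$ where the paper's Lemma~\ref{lemdistance} uses $\norm{f_{d,X}}_2^2$; since $\norm{f}_2^2\le\norm{f}_\infty$ and both quantities admit the same bound $2(d+1)/\sqrt{\pi(2d-1)}$, you recover the identical condition on $N$.
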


\paragraph{Comparison with Cartesian grid sampling}
Let us comment further on Theorem \ref{thm2} in the bivariate case.
In \cite{candes2014towards}, it was proved that if  $\mu_0 = \sum_{j=1}^M a_j \delta_{x_j}$ where $\Delta := \br{x_j: j=1,\ldots, M} \subset \bbT^2$ and $\mdist(\Delta) \geq 2.38/N$, then $\mu_0$ can be exactly recovered by solving \eqref{eq:tvmin} with the following sampling operator:
$$
\Phi_{\mathrm{Cartesian}}: \mu \mapsto  \br{\cF \mu (k) : k\in\bbZ^2: \abs{k}_\infty\leq N}.
$$
Now, if the positions in $\Delta$ are chosen uniformly at random, one can show (see Lemmas \ref{lemdistance} and \ref{lem:distance_ub}) that $\mdist(\Delta) \asymp 1/M$ with high probability, while the \textit{projected minimum separation distance} is of the order $1/M^2$. Therefore, one can  recover $\mu_0$ from $M^2$ samples by either sampling the $M^2$ Fourier coefficients of lowest frequencies, or according to Theorem \ref{thm2}, by sampling $M^2$ Fourier coefficients of lowest frequencies along 3 distinct directions. 
Thus, when restricted of sampling along 3 radial lines, one attains the same sampling bounds as in the Cartesian grid sampling case. One can make a similar comparison in the second case of Theorem \ref{thm2} -- our result guarantees exact recovery  with high probability by drawing at random $\ord{M\log^2(M)}$ coefficients from $\br{-C M^2,\ldots, C M^2}$ along 3 radial lines, while the probabilistic result \cite{chi2015compressive,tang2013compressed} in the case of sampling the Fourier coefficients on a Cartesian grid requires $\ord{M\log^2(M)}$ coefficients drawn at random from the grid $\br{k\in\bbZ^2: \abs{k}\leq C' M}$.

\section{Numerical results}\label{sec:numerics}
\subsection{Verification of Theorem \ref{thm:rand_angles}}

Theorem \ref{thm:rand_angles} states that given any discrete measure $\mu_0$, one can  reconstruct  exactly by sampling frequencies in $\{-N,\ldots, N\}$ along  3 radial lines, whose angles are chosen at random from a range $S$ for which the minimum projected separation distance is sufficiently large with respect to $S$. \rev{In Figure \ref{fig:certificates}, we let 
\be{\label{eq:S}
S = \br{ (\sin(\pi t), \cos(\pi t)) : \abs{t-1/2}\leq 1/K }}
for some $K\in \bbN$ and demonstrate the reconstruction of $M$ diracs which are well separated relative to $S$. 
For their reconstruction, we sample their Fourier frequencies  along  3 directions which are chosen at random from $S$.
Letting $N = \lceil 2/\nu_{\min}\rceil$ where
$$
\nu_{\min} = \min_{\theta\in S} \min_{j\neq k} \abs{\ip{x_j - x_k}{\theta}}_{\bbT},
$$
we present the dual certificates constructed in the cases where one samples $\{-N,\ldots, N\}$ along each line, and where one chooses 50\% of the samples at random from $\{-N,\ldots, N\}$.} In all cases, the positions and amplitudes are constructed with error smaller than $10^{-5}$. \rev{Throughout this section, the error between the reconstructed positions $\hat x$ and the true positions $x$ is $\mathrm{Err}_{\mathrm{pos}} = \norm{\hat x - x}_{\ell^2}$ and the error between the reconstructed amplitudes $\hat a$  and the true amplitudes $a$ is $\mathrm{Err}_{\mathrm{amp}} = \norm{\hat a - a}_{\ell^2}/\norm{a}_{\ell^2}$.}

\begin{figure}[H]
\begin{center}
\begin{tabular}{c@{\hspace{0pt}}c@{\hspace{0pt}}c}
 \includegraphics[width = 0.3\textwidth,trim={5cm 1cm 5cm 1cm},clip]{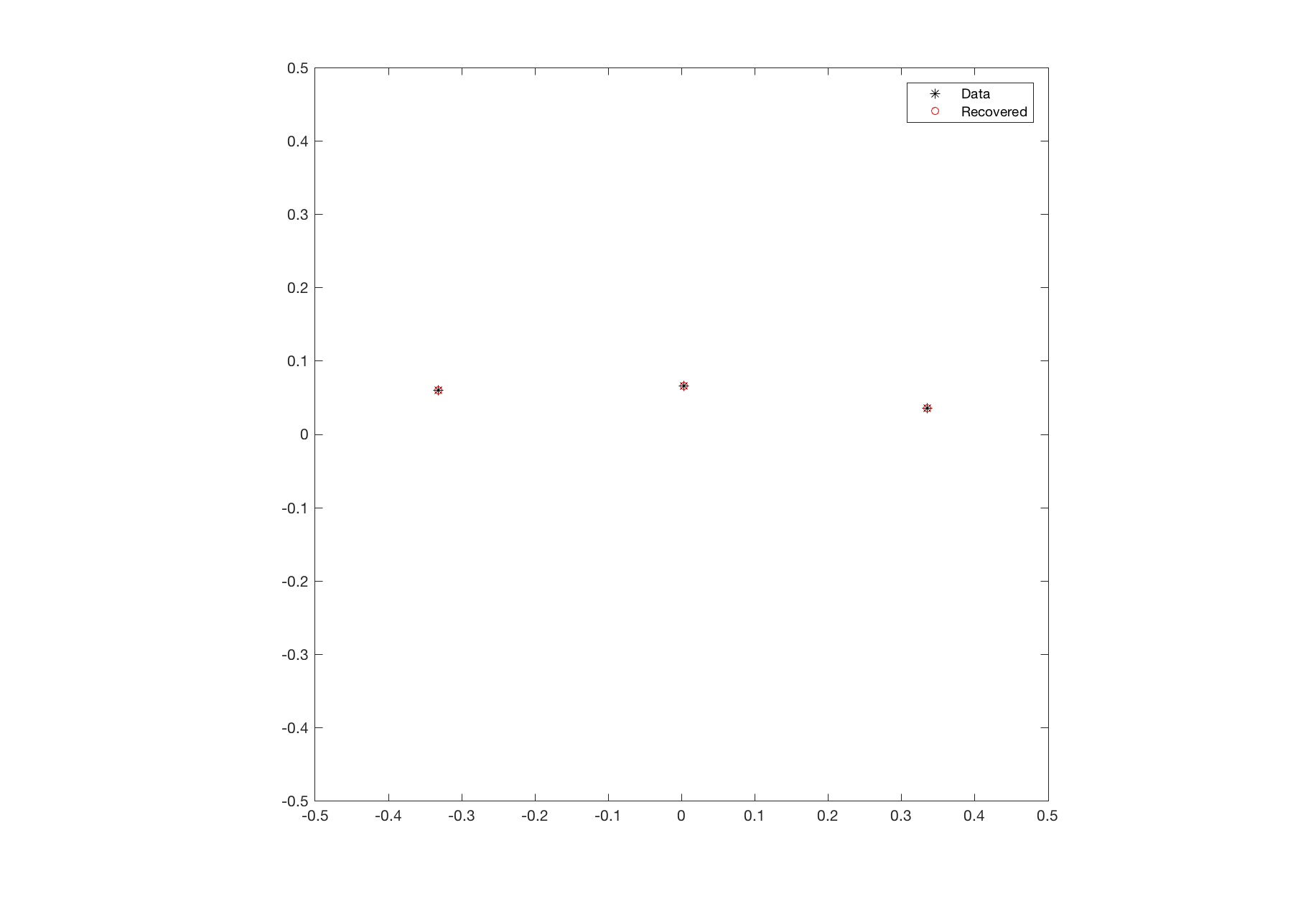}
& \includegraphics[width = 0.3\textwidth,trim={5cm 1cm 5cm 1cm},clip]{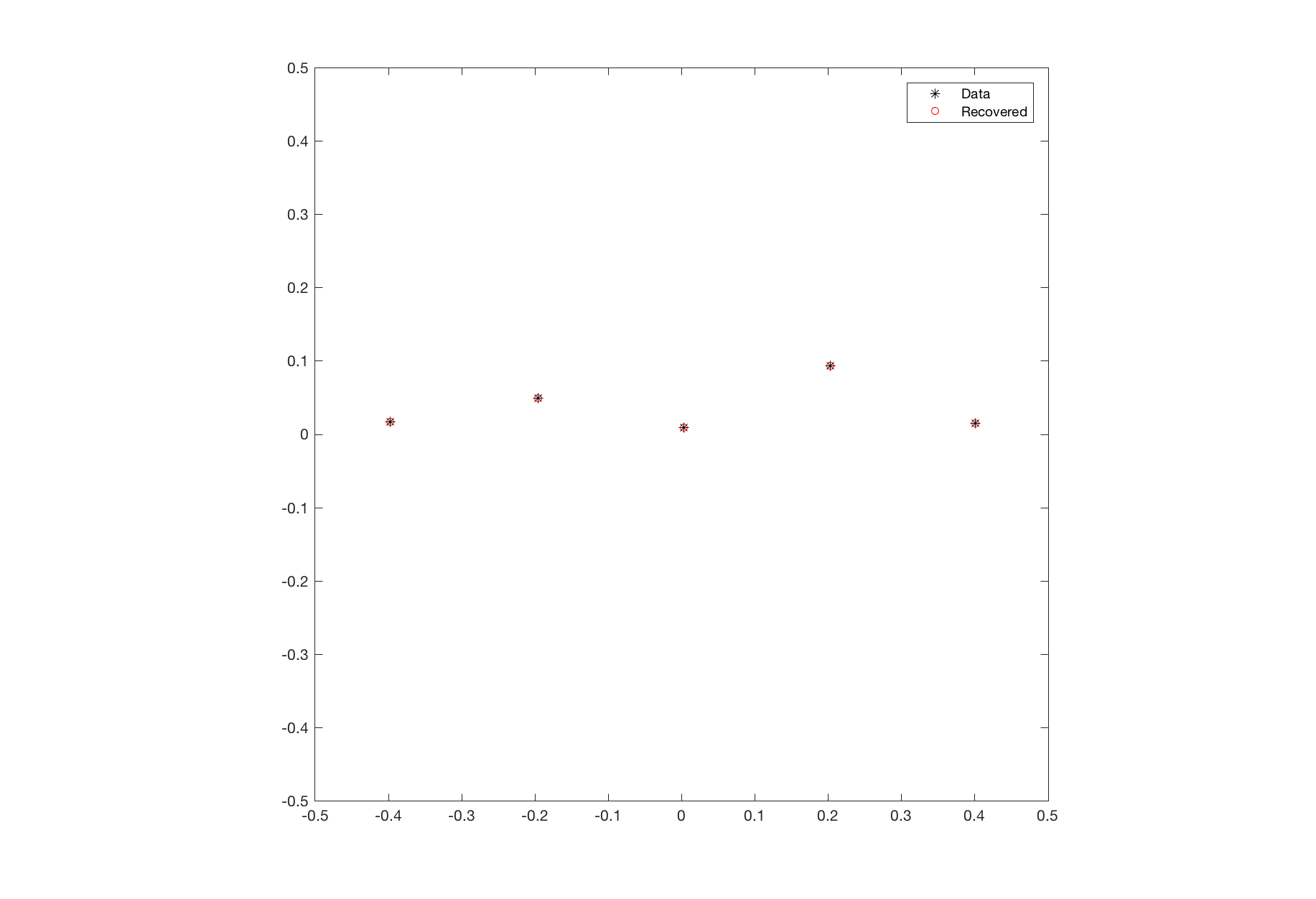}
& \includegraphics[width = 0.3\textwidth,trim={5cm 1cm 5cm 1cm},clip]{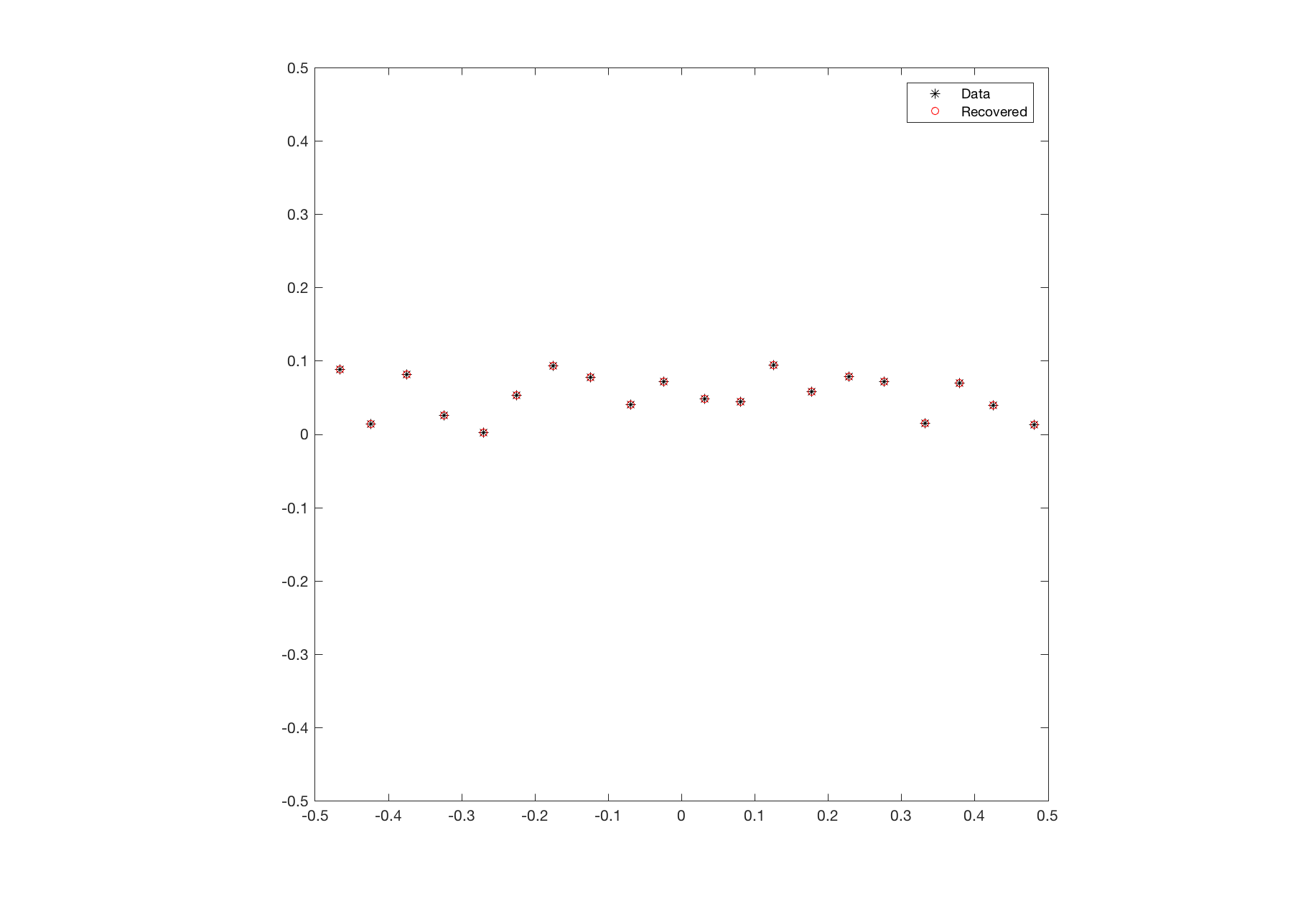} \\
 \includegraphics[width = 0.3\textwidth,trim={5cm 1cm 5cm 1cm},clip]{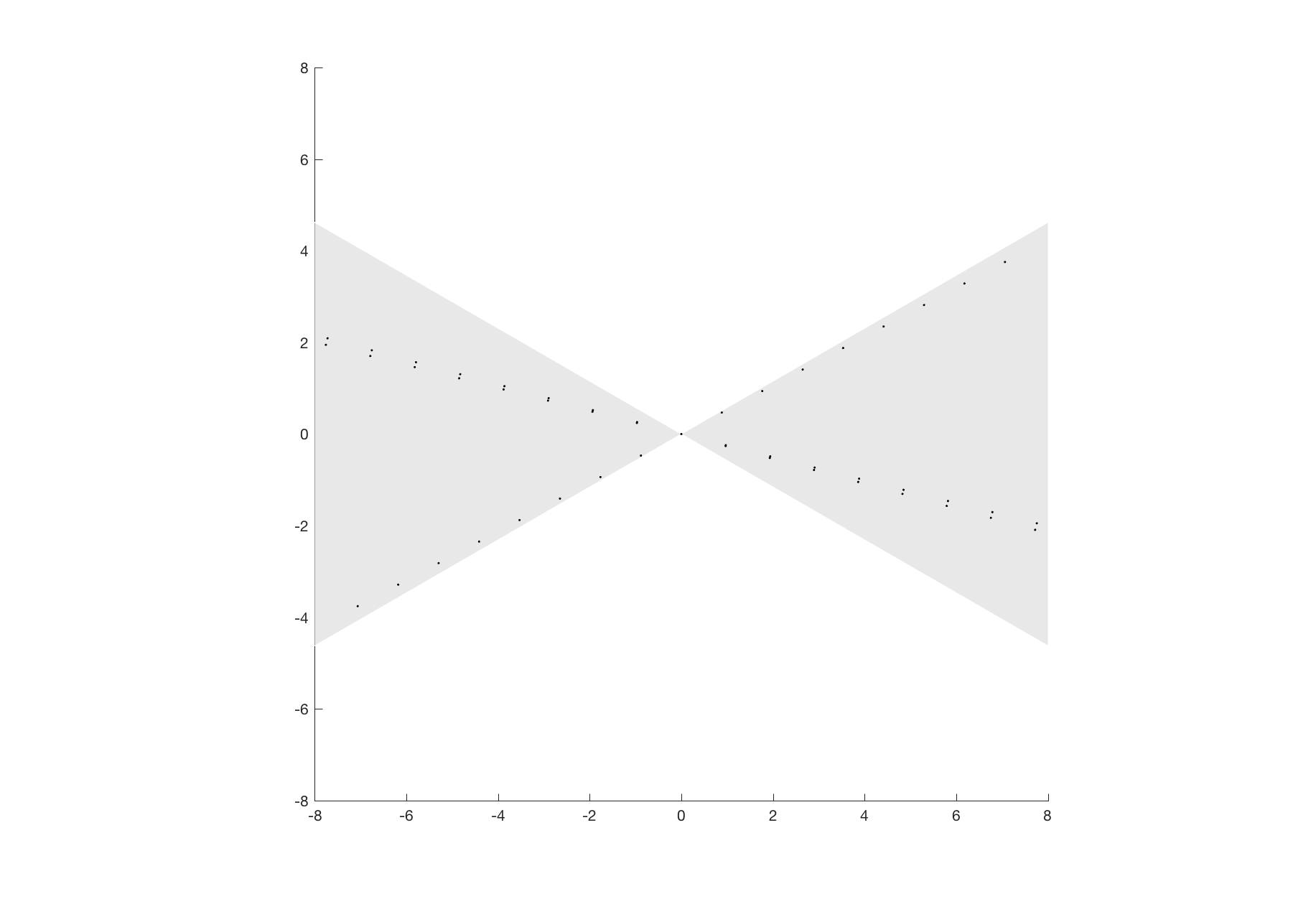}
& \includegraphics[width = 0.3\textwidth,trim={5cm 1cm 5cm 1cm},clip]{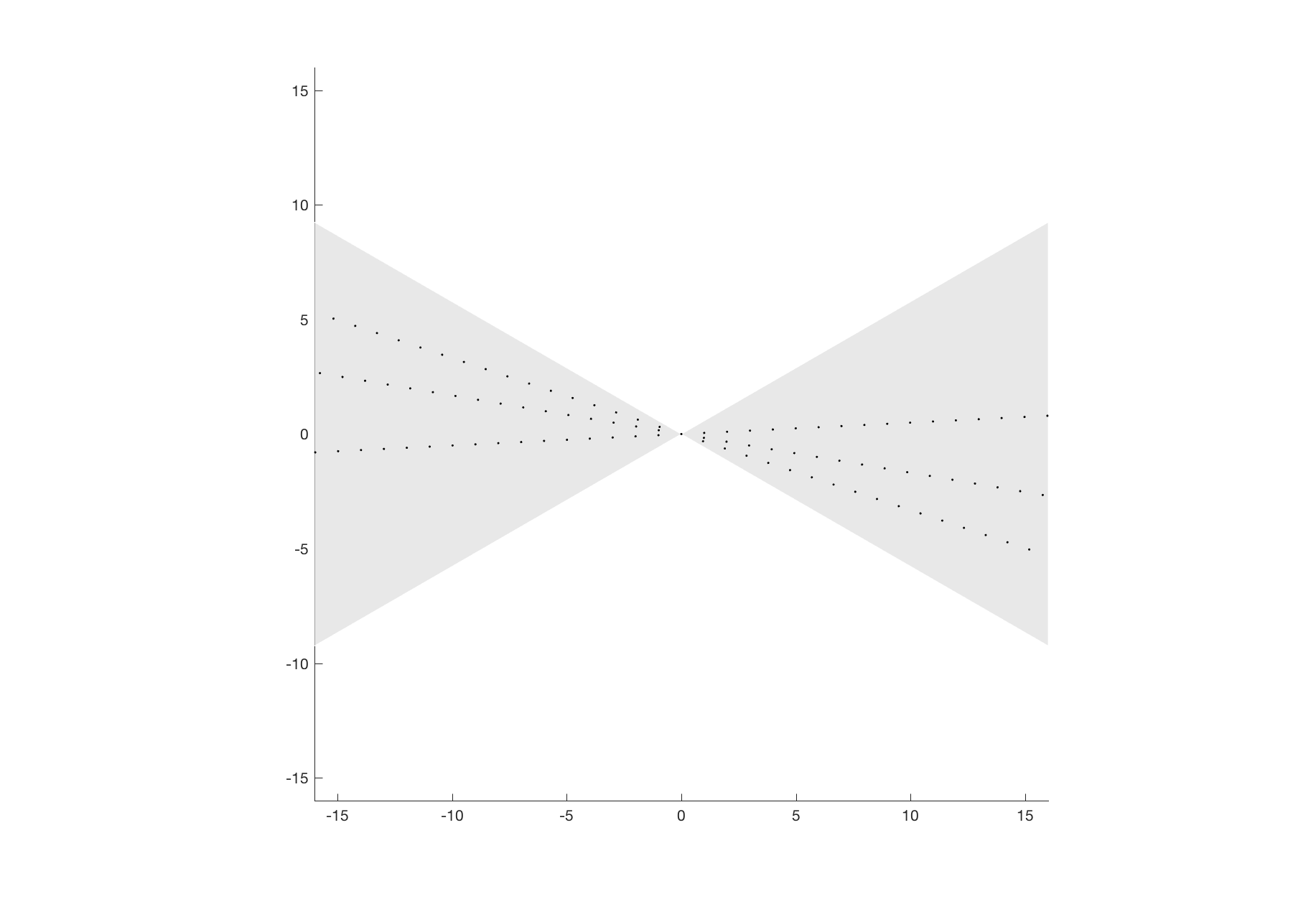}
& \includegraphics[width = 0.3\textwidth,trim={5cm 1cm 5cm 1cm},clip]{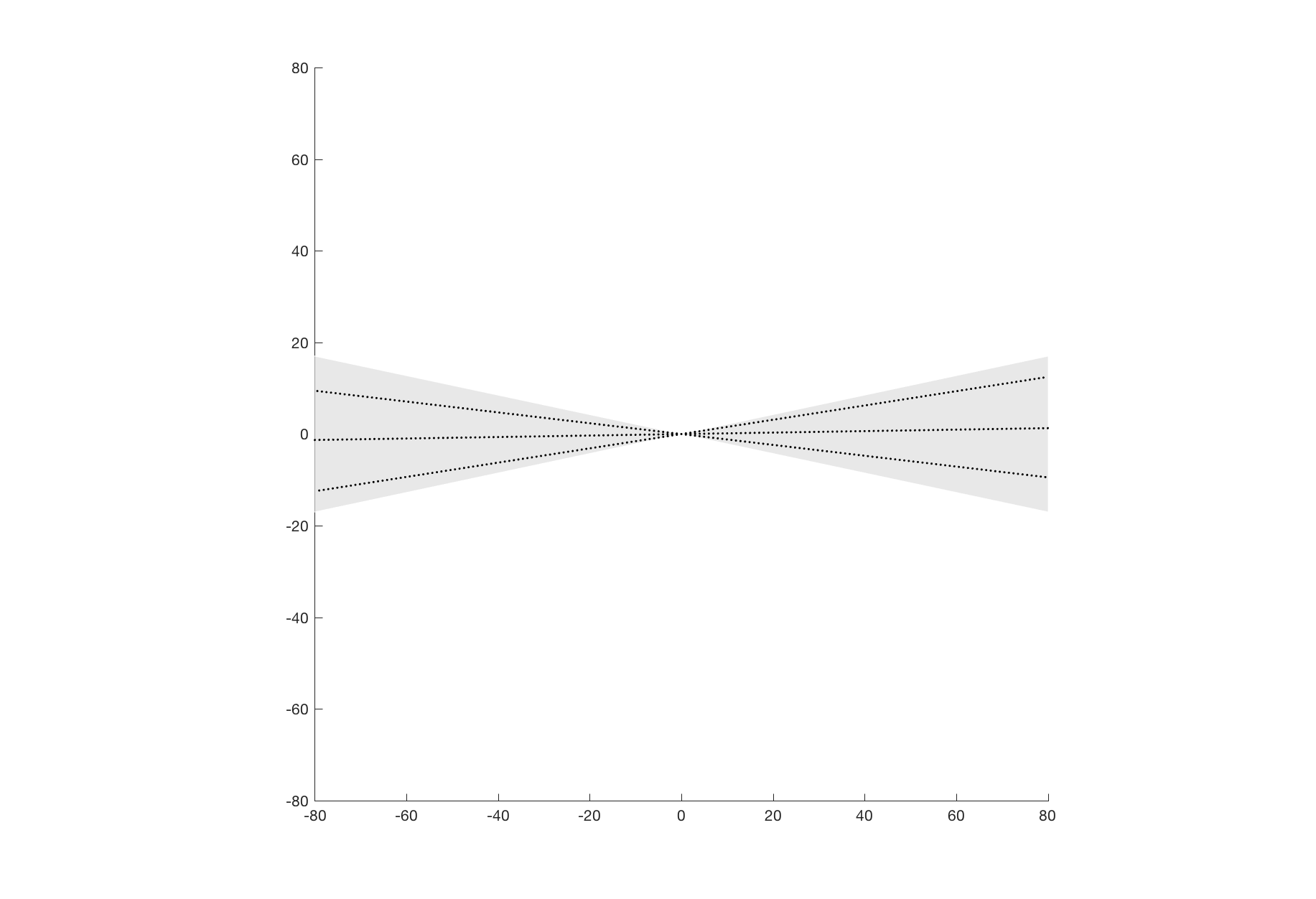} \\
 \includegraphics[width = 0.3\textwidth,trim={2cm 1cm 1cm 1cm},clip]{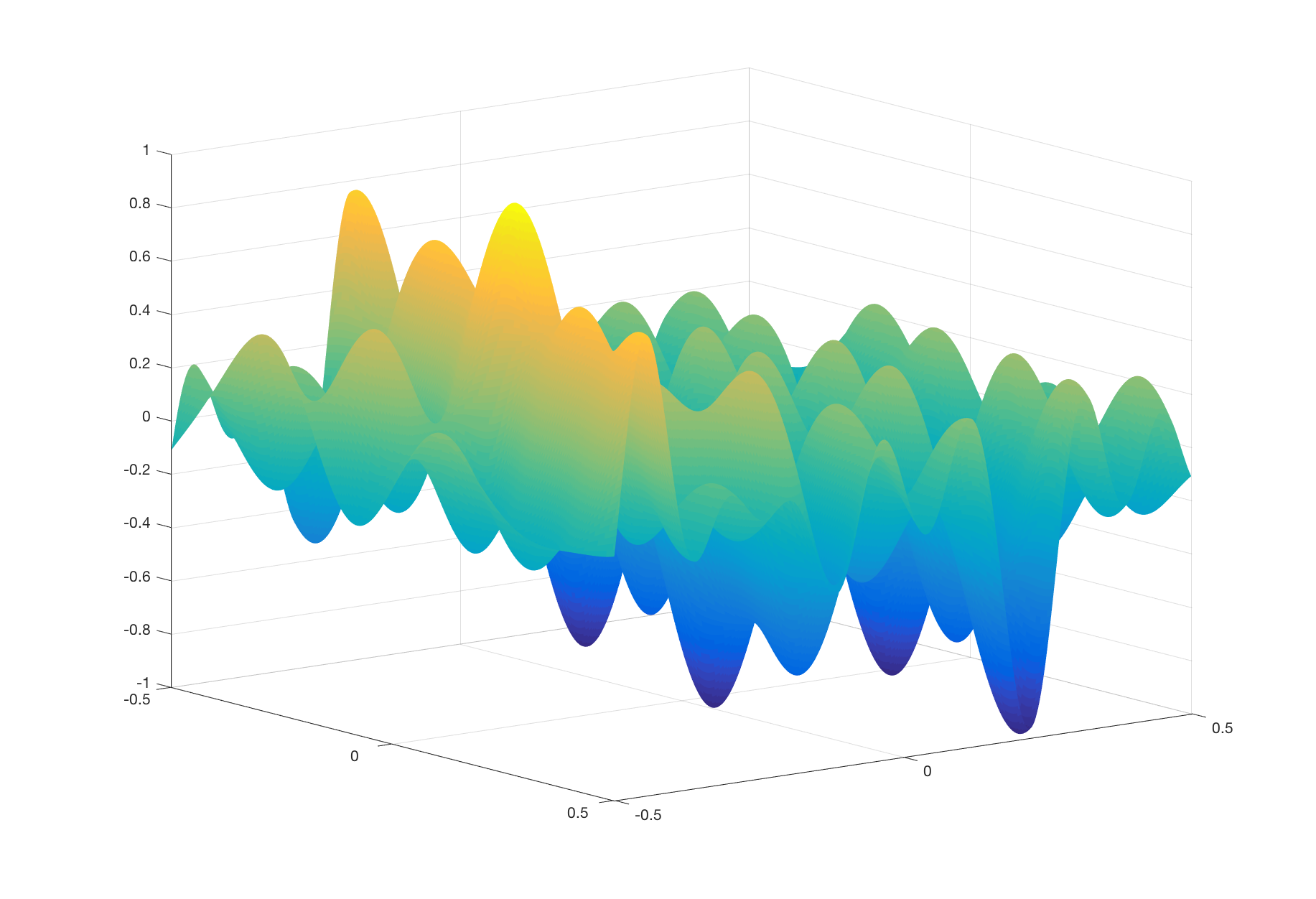}
& \includegraphics[width = 0.3\textwidth,trim={2cm 1cm 1cm 1cm},clip]{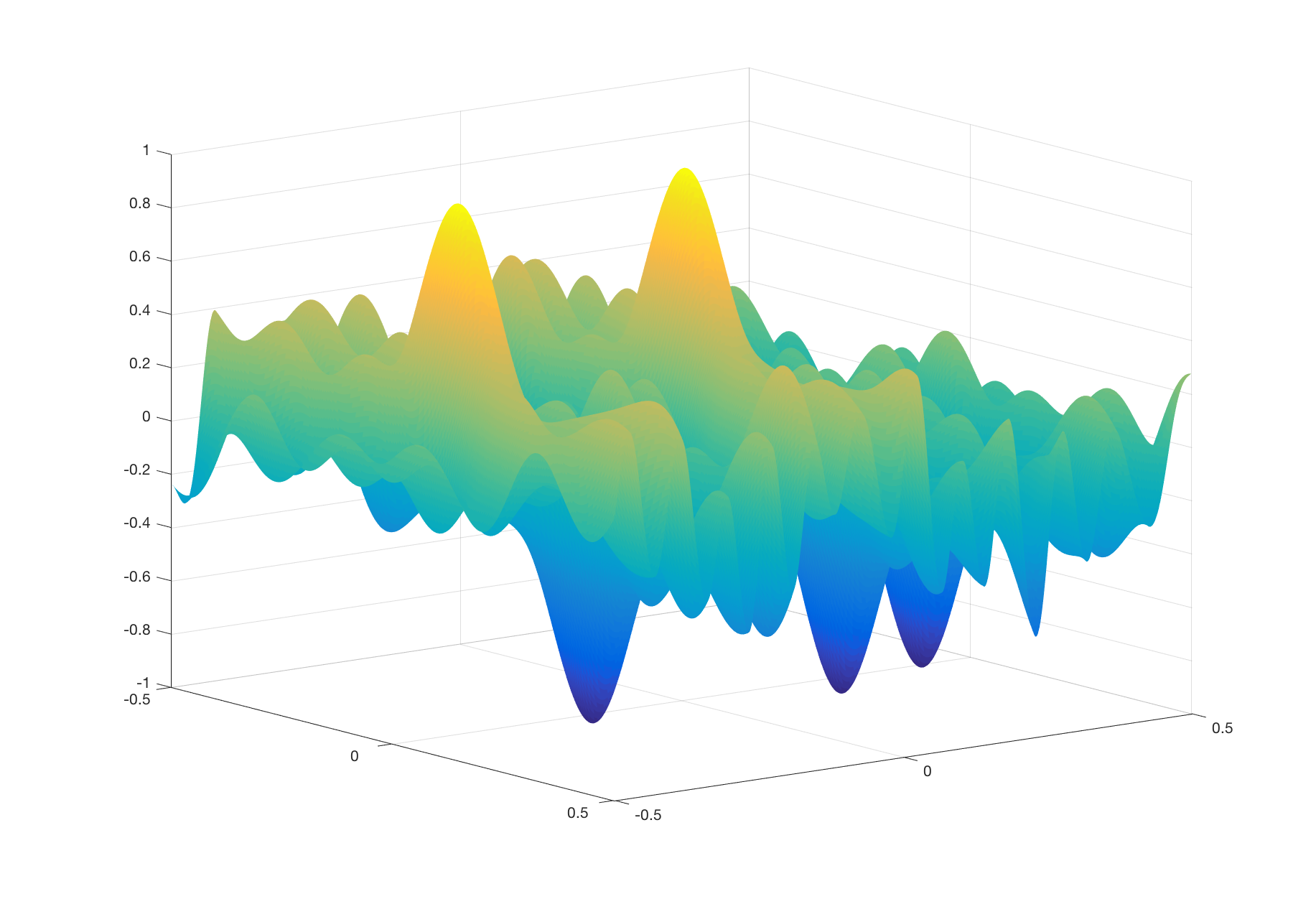}
& \includegraphics[width = 0.3\textwidth,trim={2cm 1cm 1cm 1cm},clip]{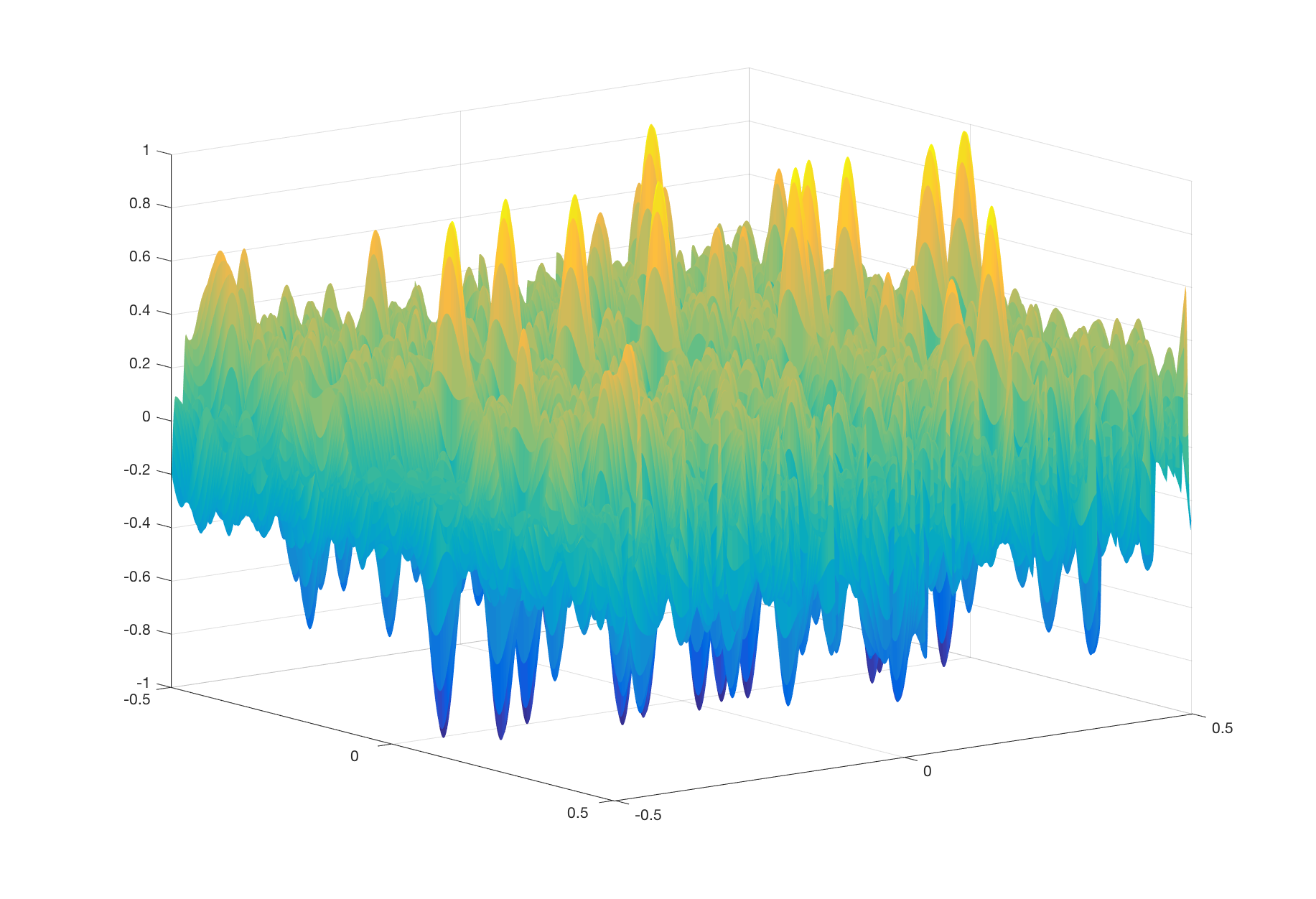}  \\
 \includegraphics[width = 0.3\textwidth,trim={2cm 1cm 1cm 1cm},clip]{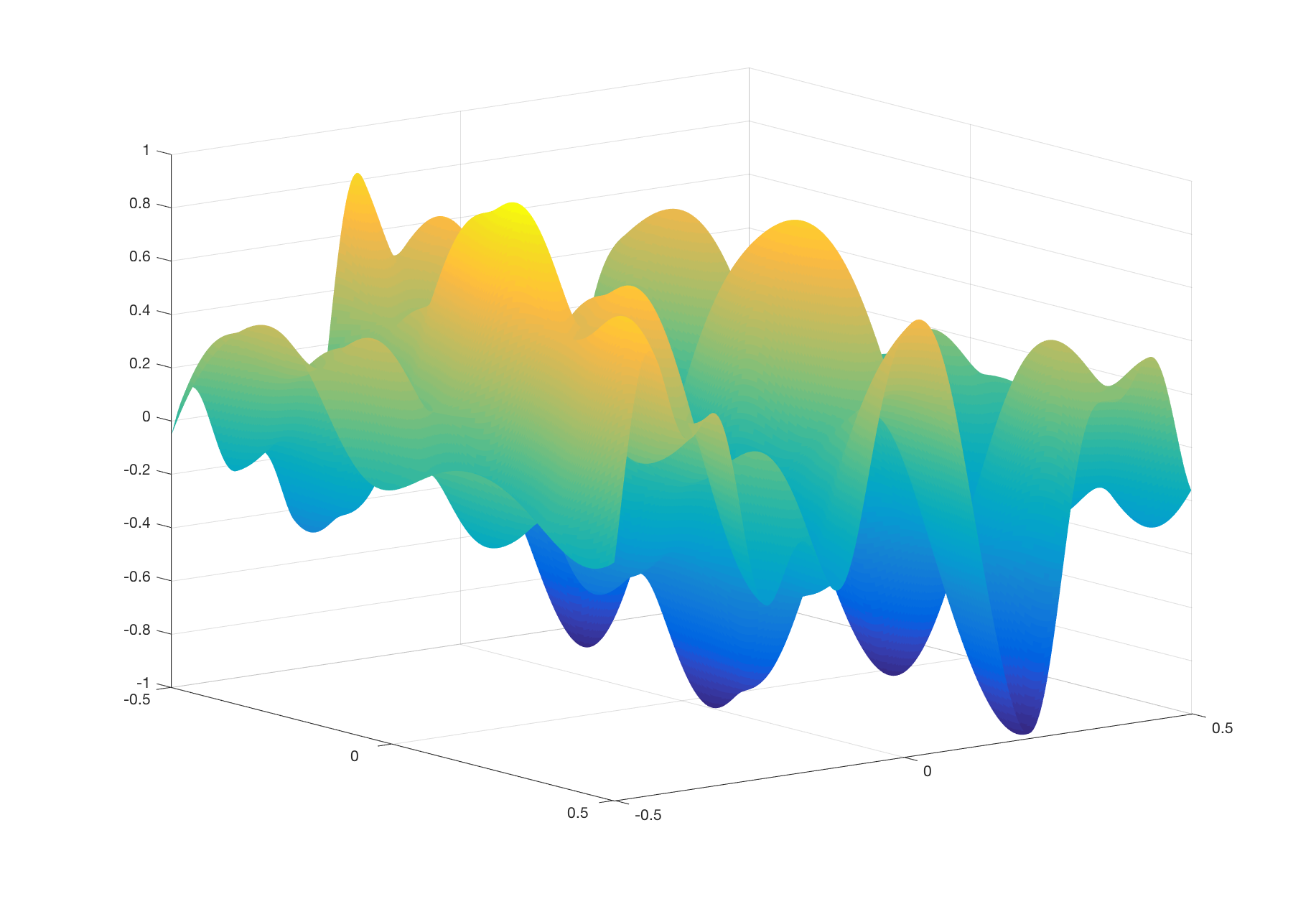}
& \includegraphics[width = 0.3\textwidth,trim={2cm 1cm 1cm 1cm},clip]{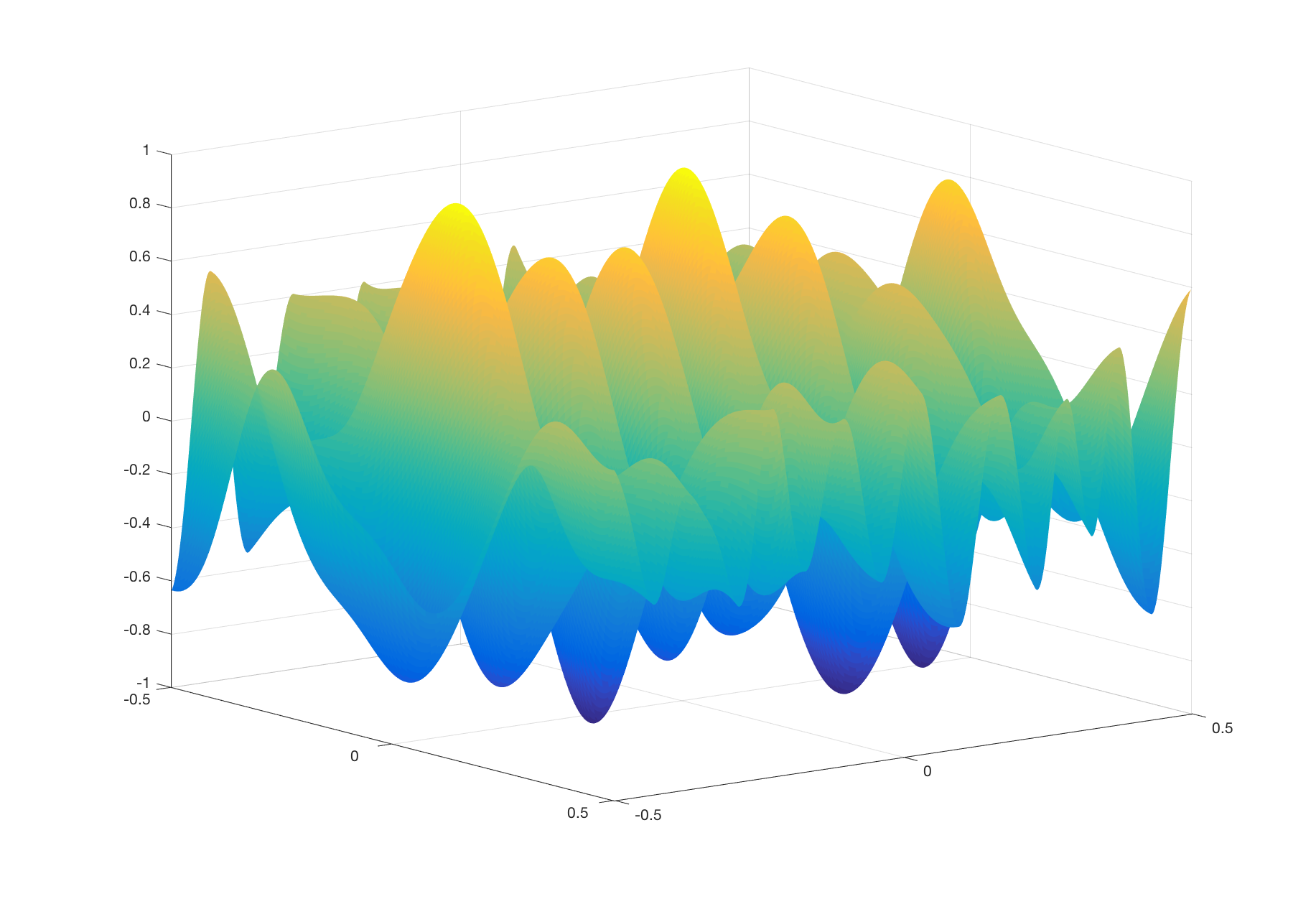}
& \includegraphics[width = 0.3\textwidth,trim={2cm 1cm 1cm 1cm},clip]{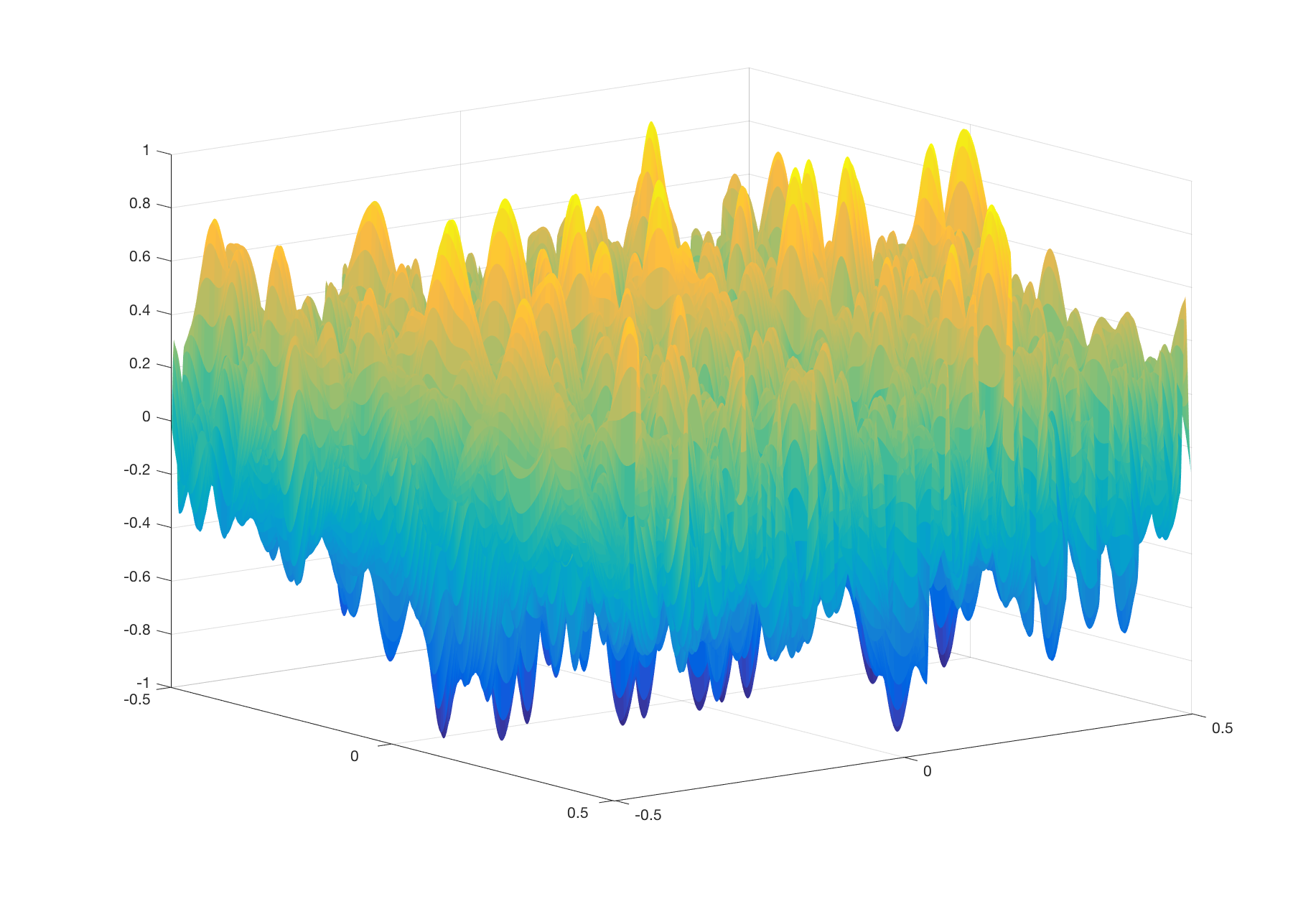}  \\
\end{tabular}
\end{center}
\caption{\rev{This figure shows the dual polynomials associated with the reconstruction of $M$ diracs using the splitting construction introduced in this article.
The far left, middle and far right columns correspond respectively to $M=3$, $M=5$ and $M=20$. In the case of $M=3$ and $M=5$, we let $K=6$ in the directions set $S$ described in \eqref{eq:S}. For this choice, $\nu_{\min } =0.2181$ and  $\nu_{\min}=0.1309$ respectively.  In the case of $M=20$, the set $S$ is chosen with $K=15$  and $\nu_{\min } = 0.0253$. The angles spanned by $S$ are indicated by the gray regions in the second row. In each case, the Fourier samples chosen along each line are from the frequencies $\{-N,\ldots, N\}$ where $N=\lceil 2/\nu_{\min}\rceil$. The indices of these Fourier samples are indicated by the black dots on the figures in the second row.  The third row and last row show respectively the dual certificates constructed when one fully samples from $\{-N,\ldots, N\}$, and  the dual certificates constructed when one samples 50\% of the frequencies in $\{-N,\ldots, N\}$, along the three directions shown in the second row.
 In all cases, the original positions and amplitudes of the diracs are reconstructed with error at most $10^{-5}$.}
\label{fig:certificates}}
\end{figure}

\subsection{Assessment of our algorithm}
\label{sec:constr_dual_cert}
As discussed in Section \ref{sec:dual_form}, the existence of a dual certificate is closely related to the existence and uniqueness of solutions to \eqref{eq:tvmin}.
This article presented one possible construction of a dual certificate by combining a sequence of univariate dual certificates. Moreover, our algorithm is guaranteed to provide the  solutions of \eqref{eq:tvmin} only when a splitting certificate exists and our main theorems provide a theoretical analysis of this. However, it is conceivable that there exists a dual certificate satisfying \eqref{toshow_unique} even when the construction of a splitting certificate is not possible.
To provide some understanding of the sharpness of our result, we shall compare numerically against the existence of another construction of a dual certificate, known as the vanishing derivatives precertificate~\cite{duval2015exact}.

One generic way of constructing a dual certificate is to consider the \textit{vanishing derivatives precertificate}, which is defined as:
\begin{equation*}
  \eta_V:= \Phi^*q_V \quad \mbox{where }q_V=\argmin \{\abs{q}^2:\ \forall i,\ (\Phi^*q)(x_{i})=\sgn(a_{i}), \diff(\Phi^*q)(x_{i})=0\},
\end{equation*}
provided the problem is feasible (which is the case if $\mu_0$ is a solution to~\eqref{eq:tvmin}). Note that $\eta_V$ is a dual certificate for $\mu_0$ if and only if $\norm{\eta_V}_\infty\leq 1$.
In this case, we say that $\eta_V$ is \emph{nondegenerate}, and it is in fact equal to the  \textit{minimal norm certificate}, that is $\eta_0:=\Phi^*q_0$ where $q_0$ is the solution to~\eqref{eq:dual_tv} with minimal $\ell^2$-norm.

The minimal norm certificate is of interest because it  governs the support stability of problem~\eqref{eq:tvmin}, that is, the properties of the support of the solutions to 
\begin{equation}\label{eq:primalnoisy}
  \inf_{\mu\in \cM(\domain)} \lambda\norm{\mu}_{TV}+ \frac{1}{2}\norm{\Phi \mu-y_0 -w}^2 \tag{$\cP_\lambda(y_0+w)$}
\end{equation}
where $w\in  \bbC^{\cfreq\times \cdir}$ is some small noise added to the observation $y_0$ and $\lambda>0$ is small.  Here, support stability refers to the property that the method recovers the same number of spikes as the original measure $\mu_0$, and that their amplitudes and locations are close to those of $\mu_0$.
We refer the reader to~\cite{duval2015exact} for more detail on the connection between the support stability and the minimal norm certificate (although
~\cite{duval2015exact} is written for a convolution, the analysis carries over without major difficulty to more general operators).

The vanishing derivatives precertificate $\eta_V$ can be seen as a proxy to understanding the stability properties of \eqref{eq:primalnoisy} and is attractive as an analytical tool since it can be computed by simply solving a linear system. In particular, 
 $q_V$ is computed by applying the Moore-Penrose pseudo-inverse of $\begin{pmatrix}
  \Phi_{x_0}&
  \Phi^{(1)}_{x_0}&
  \ldots&
  \Phi^{(d)}_{x_0}&
\end{pmatrix}^*$ to $(\sgn(a), \mathbf{0}_{dM})^T$, where  $\mathbf{0}_{dM}$ consist of $dM$ zeros,
\begin{align*}
  \Phi_{x_0}^*&=\begin{pmatrix}
     e^{2\imath\pi\ip{k_1\theta_1}{x_1}} & \ldots & e^{2\imath\pi\ip{k_{\cfreq}\theta_{\cdir}}{x_1}}\\
     \vdots & & \vdots\\
     e^{2\imath\pi\ip{k_1\theta_1}{x_{N}}} & \ldots & e^{2\imath\pi\ip{k_{\cfreq}\theta_{\cdir}}{x_N}}
   \end{pmatrix},\\
   \Phi_{x_0}^{(\ell)*}&=2\imath\pi\begin{pmatrix}
 \ip{k_1\theta_1}{v_\ell} e^{2\imath\pi\ip{k_1\theta_1}{x_1}} & \ldots & \ip{k_{\cfreq}\theta_{\abs{\setdir}}}{v_\ell}e^{2\imath\pi\ip{k_{\cfreq}\theta_{\cdir}}{x_1}}\\
     \vdots & & \vdots\\
   \ip{k_1\theta_1}{v_\ell}  e^{2\imath\pi\ip{k_1\theta_1}{x_{N}}} & \ldots &  \ip{k_{\cfreq}\theta_{\cdir}}{v_\ell}e^{2\imath\pi\ip{k_{\cfreq}\theta_{\cdir}}{x_N}}
   \end{pmatrix},
\end{align*}
and $(v_1, \ldots, v_{d})$ is an orthonormal basis of $\bbR^d$. Note that the minimal norm certificate and vanishing derivatives precertificate can be constructed only when  the underlying measure $\mu_0$ is known, so they should be seen as analytic tools to understanding when one can recover $\mu_0$ in a stable manner.

Numerically, in the case of sampling the Fourier transform on a grid, $\eta_V$ has been numerically observed to be nondegenerate whenever the positions of the point sources are $\ord{1/f_c}$ apart, where $f_c$ is the range of the sampled Fourier coefficients.  This suggests that $\eta_V$ yields an accurate understanding of stability and recoverability. \rev{Furthermore, we remark that theoretical results on conditions under which $\eta_V$ is nondegnerate have been derived in the case of weighted Fourier samples \cite{li2016approximate} and convolutional sampling operators \cite{bendory2014robust}}. Therefore, in what follows, we shall compute the corresponding vanishing derivatives precertificate and regard its nondegeneracy as a means of checking whether there are cases of recoverability where  our splitting certificate cannot be constructed.

\paragraph{Example}

In Figure \ref{t:skew}, we present the experimental results where one samples along the directions
\rev{$$
\{(\sin(\pi t), \cos(\pi t) ) : t\in \{1/2-1/7, 1/2, 1/2+1/7\} \}.
$$}
Since the directions are closer to the vertical axis, as suggested by Theorem \ref{thm2}, we shall consider the recovery of point sources whose positions are drawn from a distribution which favours concentration along the horizontal axis.
\revision{For each $M$, we generate 200 signals at random,} whose amplitudes are drawn at random from the range $[-55,55]$ and whose positions are
$$
\br{ \alpha_j(\cos(\beta_j), \sin(\beta_j)) : j=1,\ldots, M}, 
$$
where $\beta_j \sim \cN(0,0.005)$ is drawn from a normal distribution with mean 0 and variance 0.005, and $\alpha_j$ is drawn from the uniform distribution on $[-1/2, 1/2]$. Along each of the radial lines, the Fourier samples drawn are either those indexed by $\{-N,\ldots, N\}$ or $30\%$ of those indexed by $\{-N,\ldots, N\}$. The graphs show the fraction of these 200 signals which are exactly reconstructed (by exact, we mean that the reconstruction error of the positions and amplitudes are at most $10^{-4}$). For comparison, we also compute the vanishing derivatives precertificate in each case and record the percentage of the signals for which the vanishing derivatives precertificate is nondegenerate. As mentioned in Section \ref{sec:constr_dual_cert}, nondegeneracy of the vanishing derivatives precertificate is generally an indication that one can stably recover a measure via TV minimization. Furthermore, note that the construction of the vanishing derivatives precertificate requires knowledge of the original measure, and thus can be seen as the `ground truth' on which signals can be recovered. The similarity between  the fraction of successful recovery via our splitting certificate and the fraction of signals for which  the vanishing derivatives precertificate is nondegenerate suggests that the reconstruction via the SDP reconstruction  procedure is close to that of the true TV minimization problem when the underlying measure can be reconstructed via TV minimization.

\begin{figure}[H]
\begin{center}
\begin{tabular}{c@{\hspace{0pt}}c@{\hspace{0pt}}c@{\hspace{0pt}}c}
$M=3$ &$M=4$\\
 \includegraphics[width = 0.4\textwidth,trim={2cm 1cm 1cm 1cm},clip]{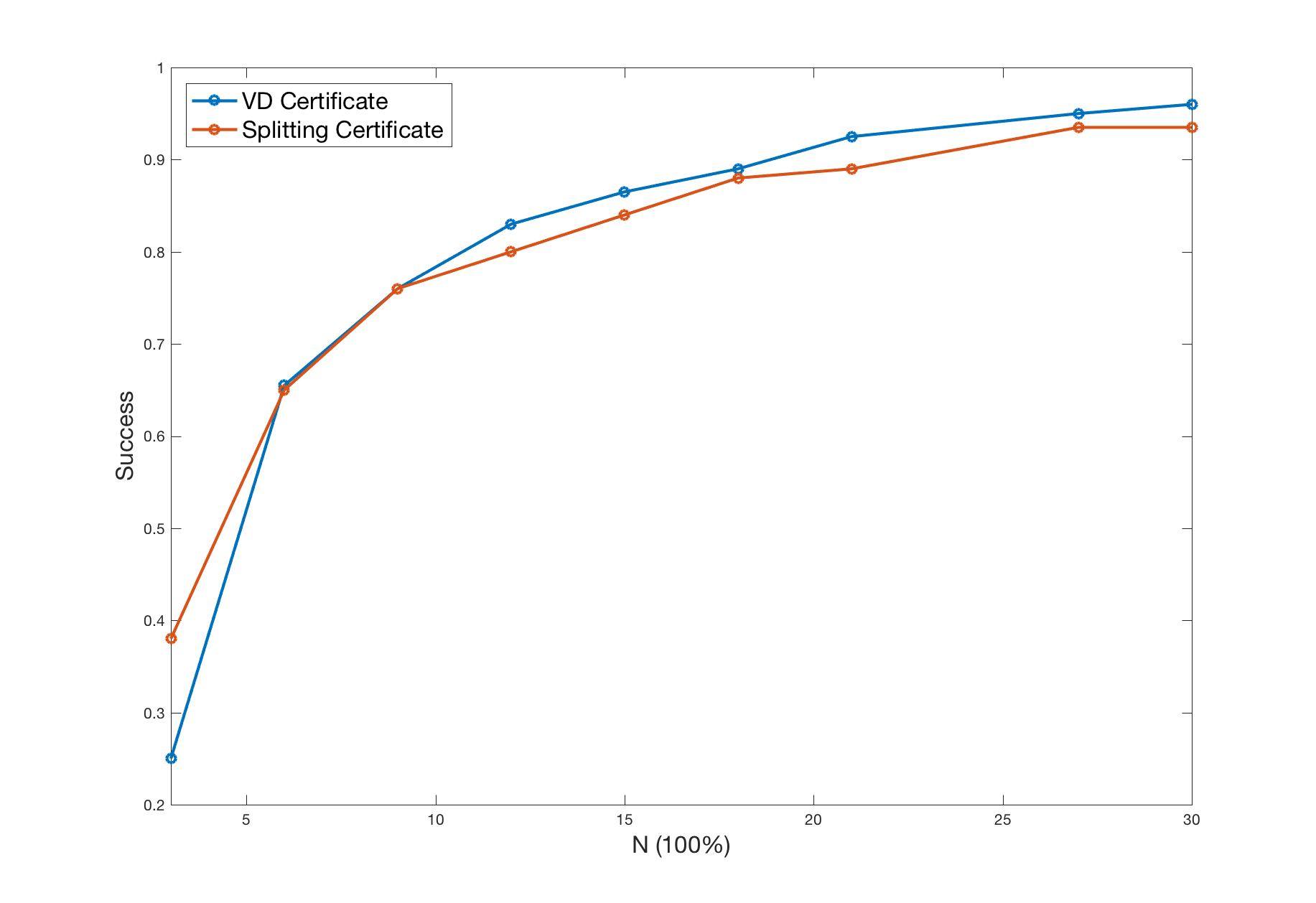}
 &
 \includegraphics[width = 0.4\textwidth,trim={2cm 1cm 1cm 1cm},clip]{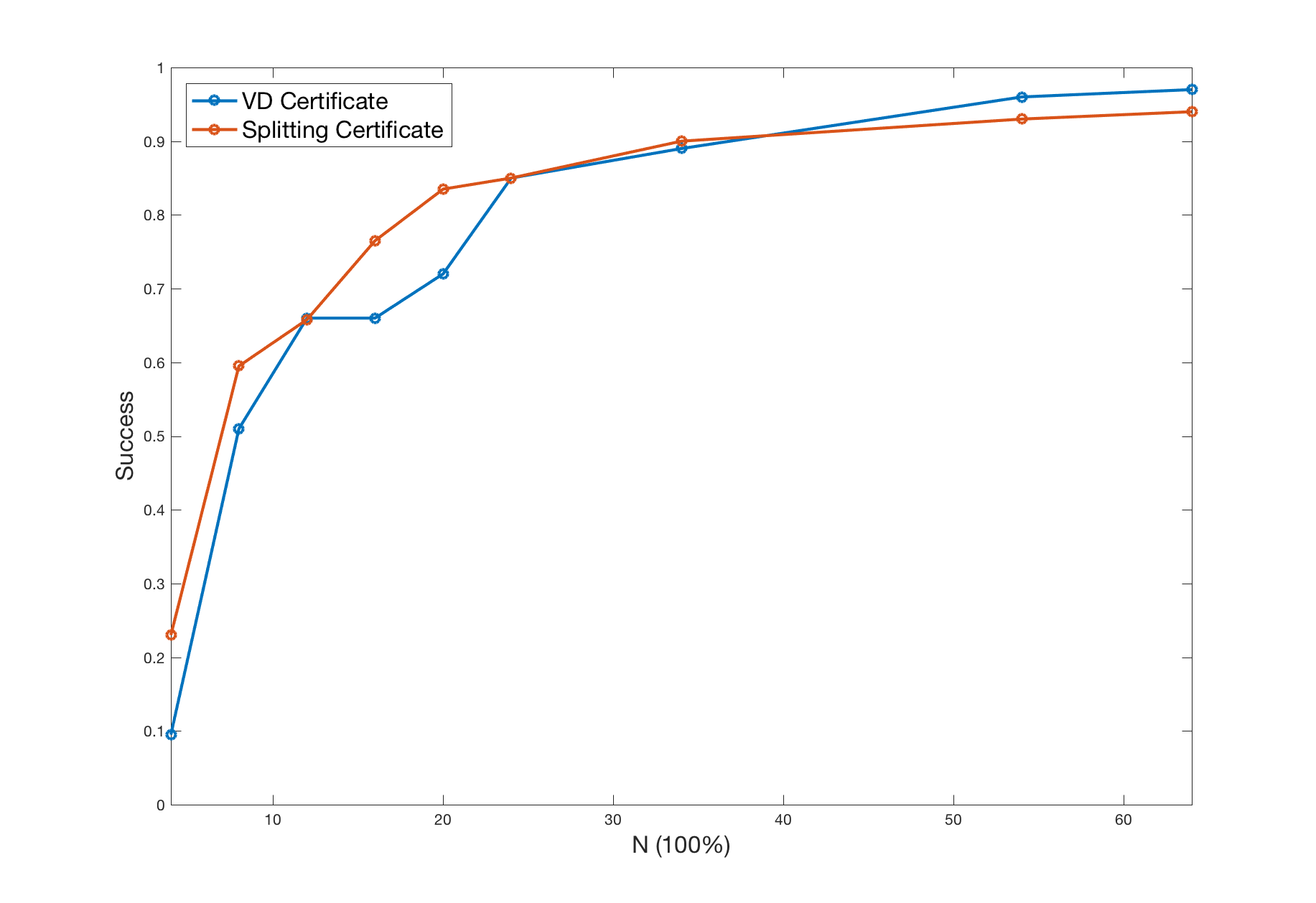}\\
 \includegraphics[width = 0.4\textwidth,trim={2cm 1cm 1cm 1cm},clip]{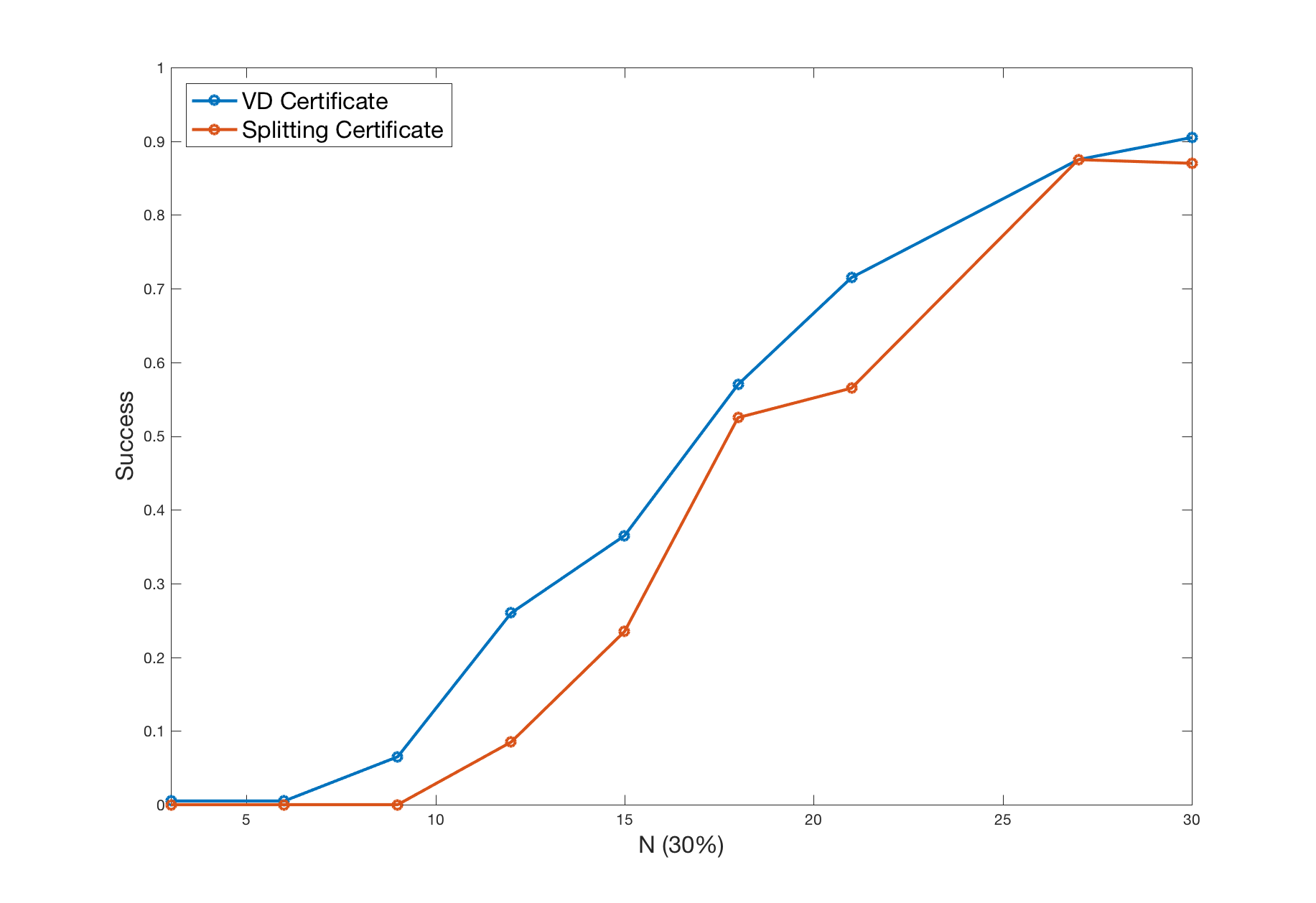}
& \includegraphics[width = 0.4\textwidth,trim={2cm 1cm 1cm 1cm},clip]{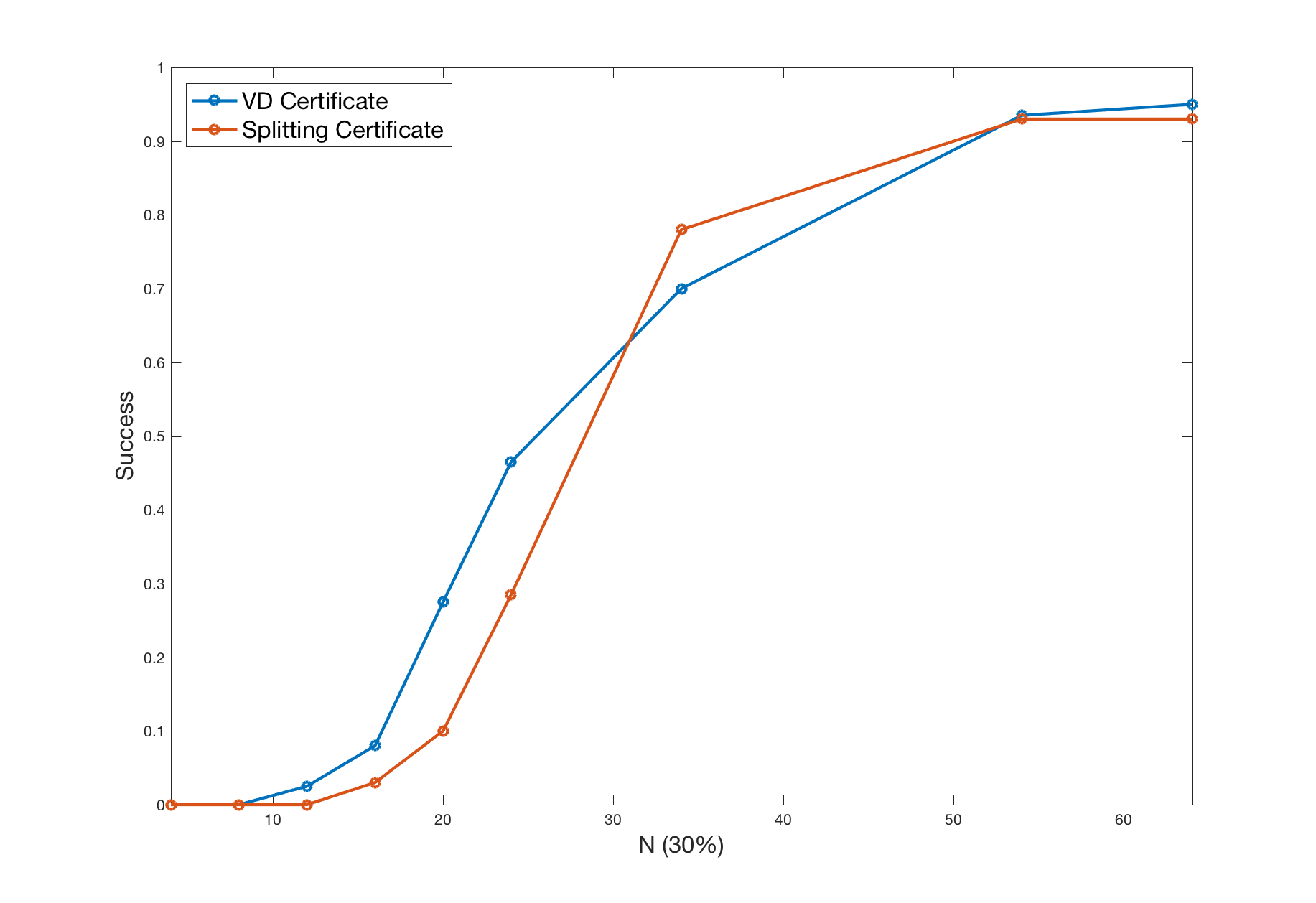} \\
\end{tabular}
\end{center}
\caption{Fraction of the 200 signals which are successfully recovered via the splitting dual approach, and the fraction for which the vanishing derivatives precertificate is nondegenerate. The top row corresponds to sampling $\{-N,\ldots, N\}$ along 3 directions and the bottom row corresponds to subsampling 30\% of $\{-N,\ldots, N\}$ along 3 directions \label{t:skew}}
\end{figure}

\subsection{Verification of Theorem \ref{thm2}}\label{sec:comparison}

In this experiment, we observe Fourier samples drawn along 3 fixed radial lines, directed by
$$
\{(\sin(\pi t), \cos(\pi t)) : t\in \{0,1/3, 2/3\} \}.
$$ 
For each $M$, we consider the reconstruction of 200 signals  whose positions are randomly chosen in the domain $B(0,1/2)$ and whose amplitudes are randomly chosen in $[-55,55]$, in both instances, with the uniform distribution. 
Figure \ref{t:exp2_100} shows the reconstruction results for the case where the
 Fourier samples observed along each line are indexed by $\{-N,\ldots, N\}$. Figure \ref{t:exp2_30} shows the reconstruction results for the case where we observe along each line $30\%$ of the Fourier samples indexed by $\{-N,\ldots, N\}$.
For comparison, we show the fraction of signals for which the corresponding vanishing derivatives precertificate is nondegenerate.

As suggested by Theorem \ref{thm2}, one is required to sample at least $M^2$ Fourier frequencies along each line to guarantee recovery with high probability. Notice that the fraction of successful recovery via the spitting approach with $L'=3$ is lower than the fraction of signals for which the vanishing derivatives precertificate is nondegenerate, whereas, choosing $L'=2$ results in reconstruction rates which are roughly in line with the nondegeneracy of the vanishing derivatives precertificate. To see why choosing $L'=2$ yields better results, first note that it is easier for condition (A1) to be satisfied when $L'$ is smaller -- since choosing $L'=2$ means that we allow for the nonexistence of a   dual certificate  along one of the 3 chosen directions, one can choose a slightly lower sampling range. Furthermore, in the case where we have only two angles for which (A1) holds, the set $\supsat$ for which (A2) is required to hold consists of up to $M^2$ points. However, since each line consists of  $\ord{M^2}$ sampling points, one can in fact show that the operator $A_{\Theta,\Gamma,\supsat}$ is injective almost surely when the positions are drawn at random.
The closeness of the recovery rates  for this splitting certificate with the `ground truth' recovery rates as approximated by the vanishing derivatives precertificate suggest that further analysis of this splitting certificate may lead to a deeper understanding of the solutions of \eqref{eq:tvmin}.

\begin{figure}[H]
\begin{center}
\begin{tabular}{c@{\hspace{0pt}}c@{\hspace{0pt}}c@{\hspace{0pt}}c}
$M=3$ &$M=4$\\
 \includegraphics[width = 0.4\textwidth,trim={2cm 1cm 1cm 1cm},clip]{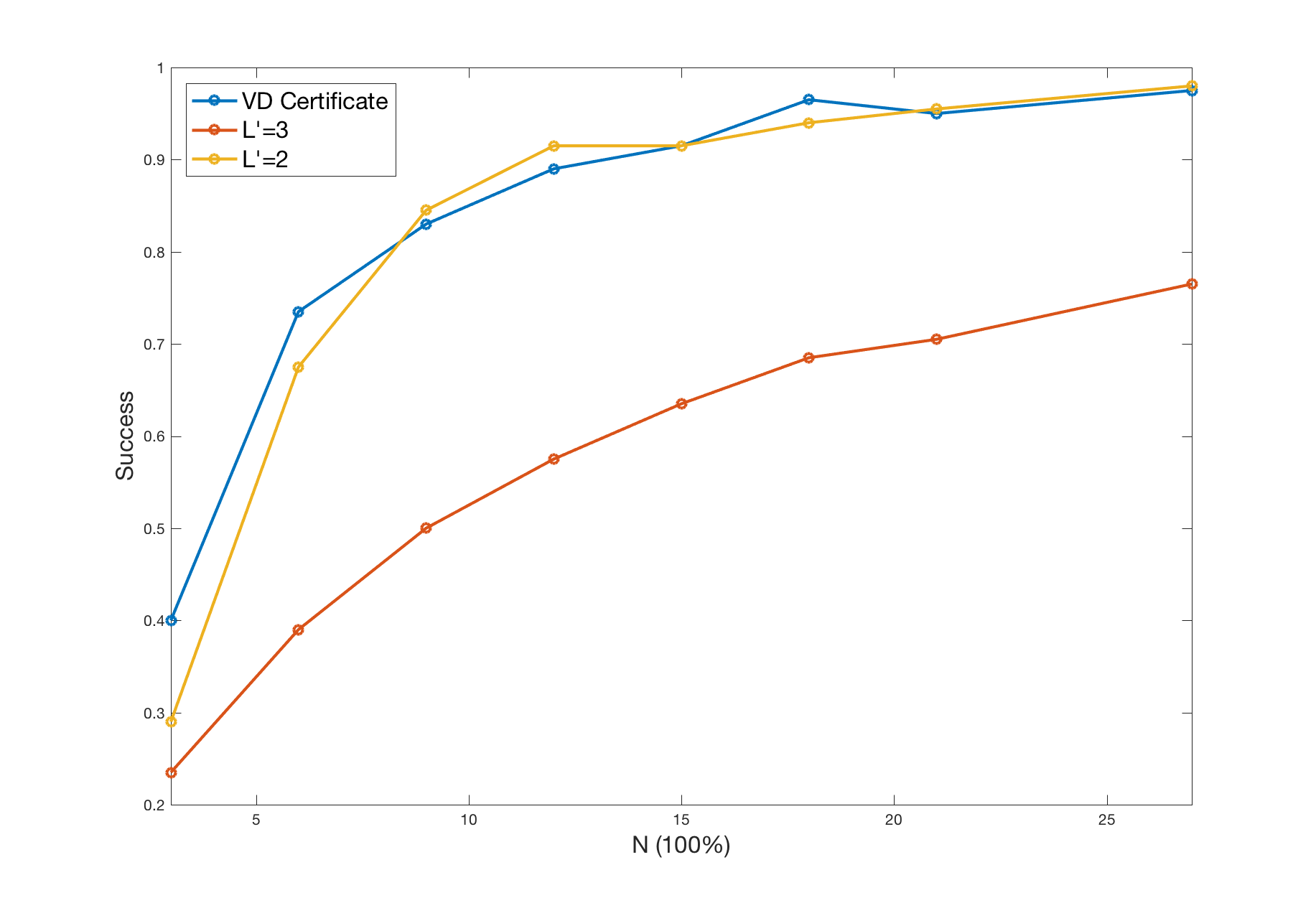}
 &
 \includegraphics[width = 0.4\textwidth,trim={2cm 1cm 1cm 1cm},clip]{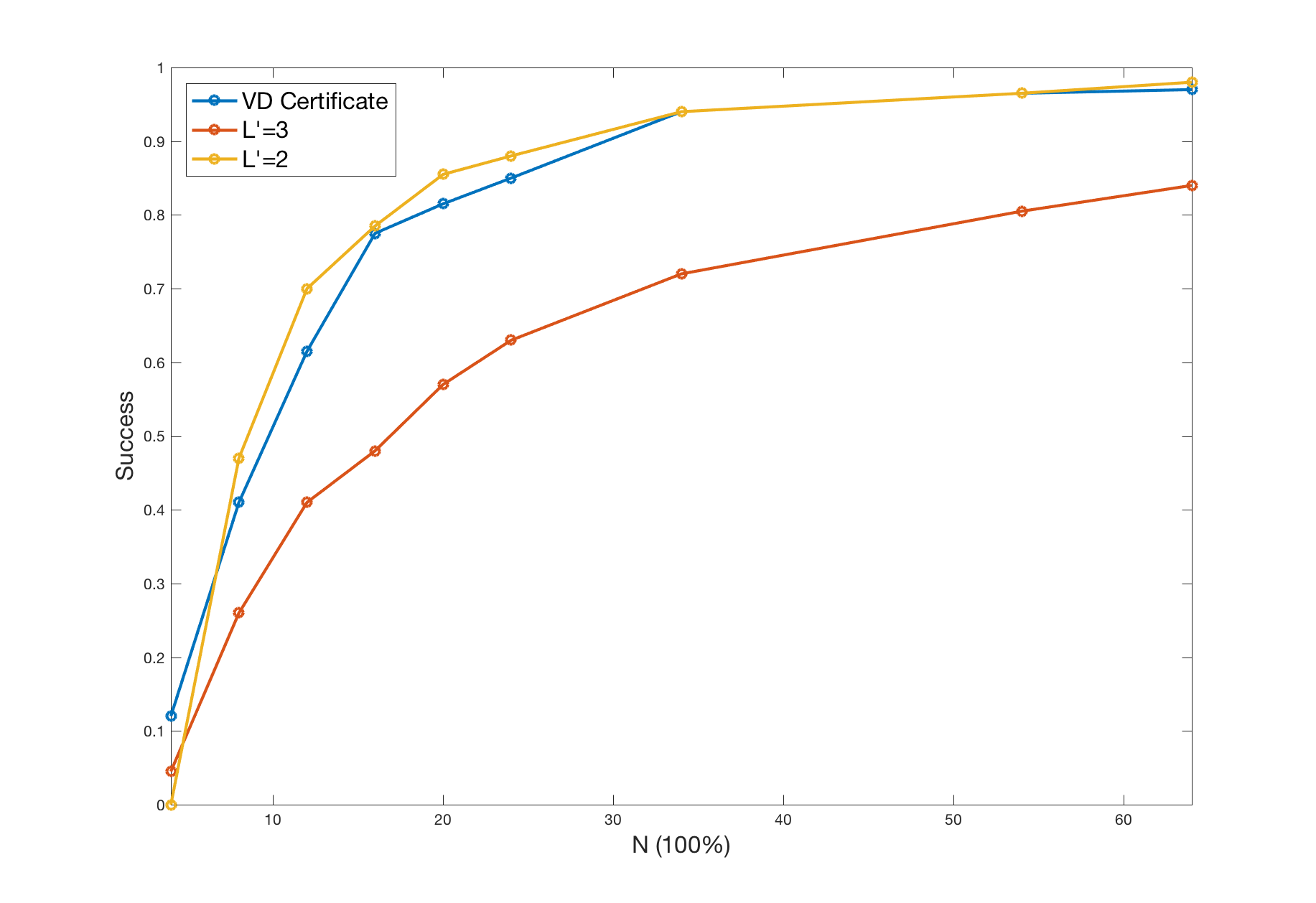}
\end{tabular}
\end{center}
\caption{Fraction of the 200 signals which are successfully recovered via the splitting dual approach, and the fraction for which the vanishing derivatives precertificate is nondegenerate. Here, we sample all  Fourier coefficients indexed by $\{-N,\ldots,N\}$ along angles $\{0,\pi/3, 2\pi/3\}$. \label{t:exp2_100}}
\end{figure}

\begin{figure}[H]
\begin{center}
\begin{tabular}{c@{\hspace{0pt}}c@{\hspace{0pt}}c@{\hspace{0pt}}c}
$M=3$ &$M=4$\\
 \includegraphics[width = 0.4\textwidth,trim={2cm 1cm 1cm 1cm},clip]{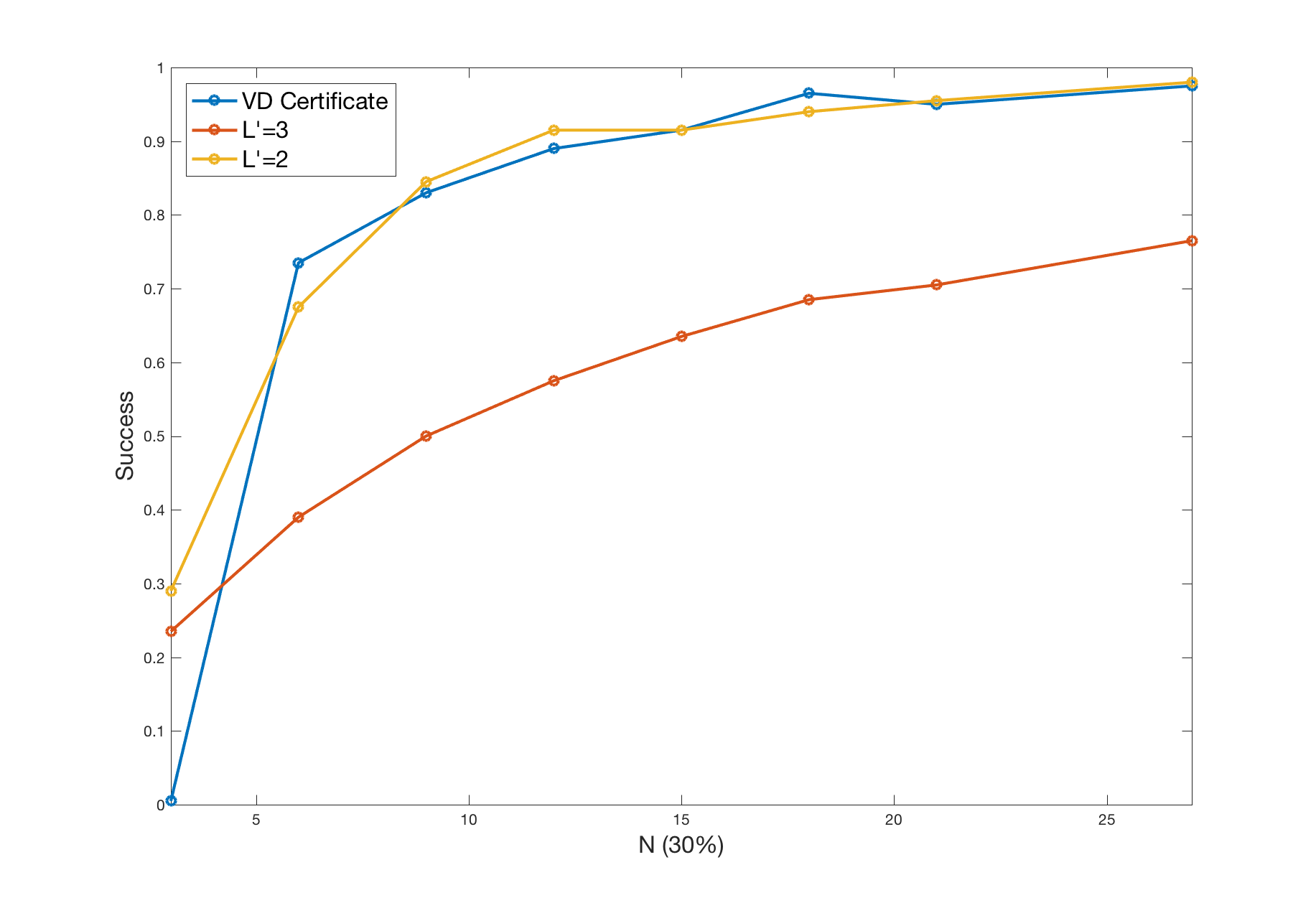}
 &
 \includegraphics[width = 0.4\textwidth,trim={2cm 1cm 1cm 1cm},clip]{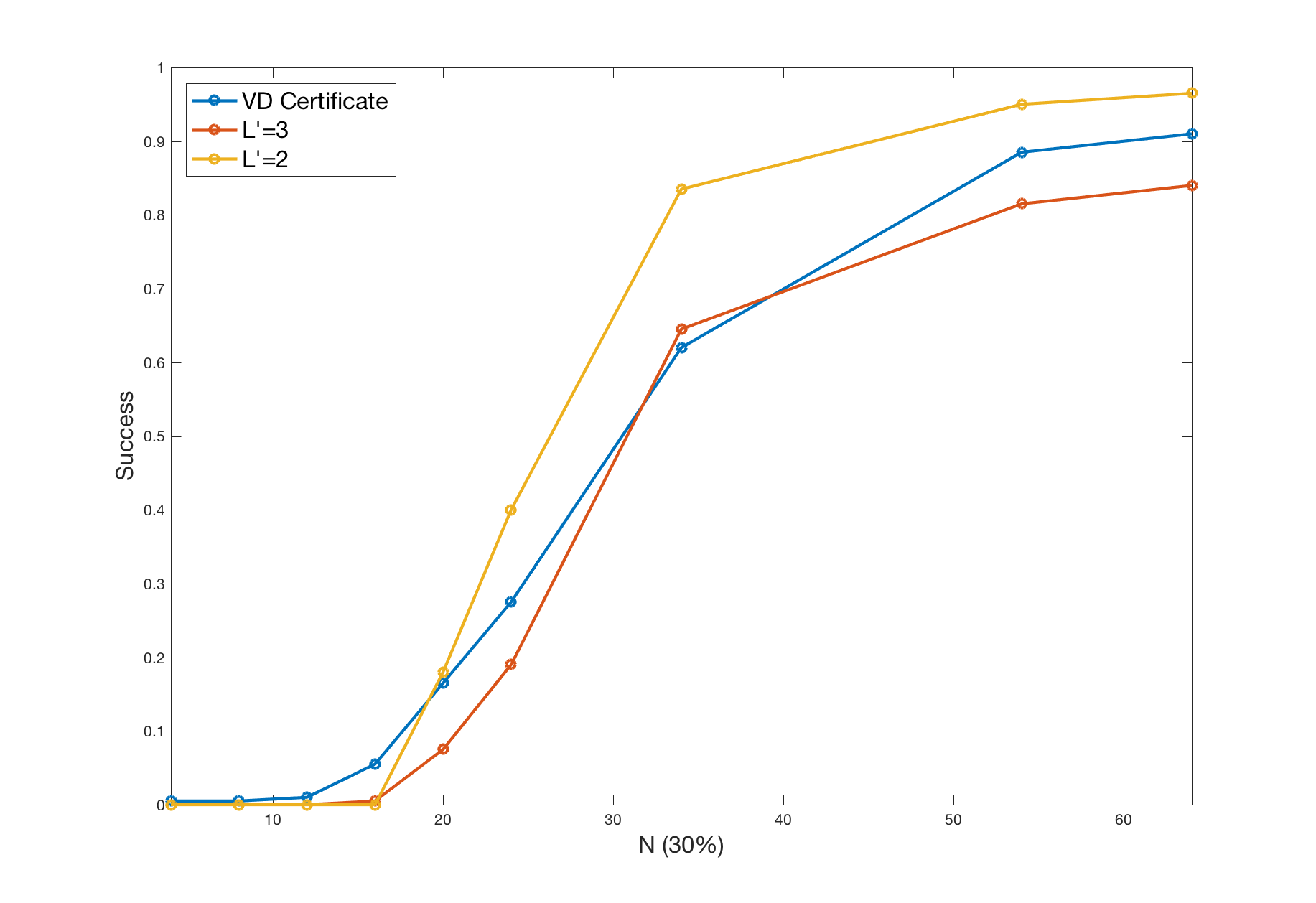}
\end{tabular}
\end{center}
\caption{Fraction of the 200 signals which are successfully recovered via the splitting dual approach, and the fraction for which the vanishing derivatives precertificate is nondegenerate. Here, we sample 30\% of the Fourier coefficients in $\{-N,\ldots,N\}$ along angles $\{0,\pi/3, 2\pi/3\}$. \label{t:exp2_30}}
\end{figure}

\section{Extension to the noisy setting}\label{sec:noise}
\rev{
In this section, we  explain how algorithm \algname~ can be extended for the recovery of $\mu_0 = \sum_{j=1}^M a_j \delta_{x_j}$, given noisy measurements $y = y_0  + w$ where $y_0 = \Phi \mu_0$ and $w\in \bbC^{T\times L}$ is such that $\norm{w}_2\leq \delta$ for some small $\delta>0$.
However, let us stress the fact that the proposed method does not coincide with the solution of~\eqref{eq:primalnoisy} for $\lambda>0$.
}

\rev{
First note that the dual problem of  the robust TV minimization problem  \eqref{eq:primalnoisy}
is
$$
\sup\br{\Re\ip{y_0}{q} - \frac{\lambda}{2}\norm{q}_2^2:\; \norm{\Phi^* q}_\infty\leq 1}.
$$
This naturally suggests that we can extend Algorithm \algname{} to the noisy setting by carrying out the following modifications:
\begin{enumerate}
\item[(I)] Replace each SDP problem in Step 1 by its robust version:
\be{\label{eq:blassodual1D}
\sup\left\{ \Re  \ip{y_\theta}{c} -\lambda \norm{c}_2^2: \; \begin{bmatrix}
Q &c\\
c^* & 1
\end{bmatrix} \succeq 0, \qquad \sum_{i=1}^{N-j}Q_{i,i+j}=\begin{cases}
1 &j=0\\
0 &j=1,\ldots, N-1
\end{cases}, \quad c_{\setfreq^c} = 0
\right\}.}
\item[(II)]  In Step 2, let
\be{\label{eq:Delta_bar_robust}
\supsat_\subsetdir  := \bigcap_{\theta\in \subsetdir} \bigcup_{t \in \cT_\theta} \cN_\lambda \left( t \theta + \ell_\theta^\perp  \right).
}
where for the first $d$  directions in $\Theta'$, we let $\cN_\lambda(H) = H$ and for the remaining directions,
$$
\cN_\lambda(H) = \br{x:\; \mathrm{dist}(x,H)\leq \lambda}.
$$
\end{enumerate}
Note that in (II), the intersection of the hyperplanes directed by the first $d$ directions consist of $M^d$ points (see the proof of Lemma \ref{lem:holdae}), and intersection of this finite point set with the remaining hyperplanes ensure that we recover only the points which are within a small neighbourhood of the hyperplanes prescribed by the remaining directions.}

\rev{A detailed analysis of this reconstruction procedure is beyond the scope of this paper. Let us mention, however, that
\begin{itemize}
 \item the problem~\eqref{eq:blassodual1D} corresponds to the dual of the one-dimensional BLASSO problem. The study in~\cite{duval2015exact} (see also the discussion in Section~\ref{sec:constr_dual_cert}) ensures the support recovery at low noise for such problems, provided a Non-Degenerate Source Condition holds (which is empirically the case if the projection of all the spikes onto $\ell_\theta$ are sufficiently separated). As a result, for small values of $\lambda$ and $\norm{w}_2$, the saturations of 
 $p_\theta$ should be exactly the (slightly shifted) projections $\ip{\theta}{x_j}$, and nothing else ;
 
 \item in Figure \ref{fig:noise}, we present some numerical examples to demonstrate that the above modifications make Algorithm \algname ~  robust to small perturbations in the Fourier measurements.
\end{itemize}

In Figure \ref{fig:noise}, we show the reconstruction of $M$ point sources when given noisy samples along three directions. The samples which we observe are 
$$
y_{\text{observed}} = y_{\text{true}} + 0.15  \frac{n}{\norm{n}}, 
$$
where $n$ is a vector whose entries are i.i.d. normal Gaussian, and
$
y_{\text{true}} = \Phi \mu_0
$ with $$\Theta = \br{(\sin(\pi t),\cos(\pi t)): t \in \{0,1/3, 2/3\} },\quad \Gamma =\{-N,\ldots, N\}, \quad N= \lceil 1/\nu_{\min}\rceil.$$
In our examples, the  error on the recovered positions ($\mathrm{Err}_{\mathrm{pos}}$) and relative error on the recovered amplitudes ($\mathrm{Err}_{\mathrm{amp}}$) are all below 0.05.}

\rev{
\begin{remark}
If one is interested in the true BLASSO~\eqref{eq:primalnoisy}, the splitting approach exposed in this paper still provides some information in the sense that it constructs a dual certificate for~\eqref{eq:tvmin}. 
In the study of convex variational problems, it has been shown that the existence of a dual certificate $v$ implies robustness to noise corrupted measurements,  see \cite{burger2004convergence} for stability estimates with respect to the Bregman distance and \cite{candes2013super} for stability estimates with repect to the $L^2$ norm. In particular, these estimates depend on the decay of $\abs{v}$ away from 1 outside the support of $\mu_0$. One of the contributions of this paper is the construction of a dual certificate in the case radial lines sampling. We therefore expect that by careful analysis of our splitting certificate, one can apply the results of \cite{burger2004convergence} and \cite{candes2013super} to understand the robustness properties of \eqref{eq:primalnoisy}.
\end{remark}
}

\begin{figure}[ht]
\begin{center}
\begin{tabular}{c@{\hspace{0pt}}c@{\hspace{0pt}}c}
 \includegraphics[width = 0.3\textwidth,trim={5cm 1cm 5cm 1cm},clip]{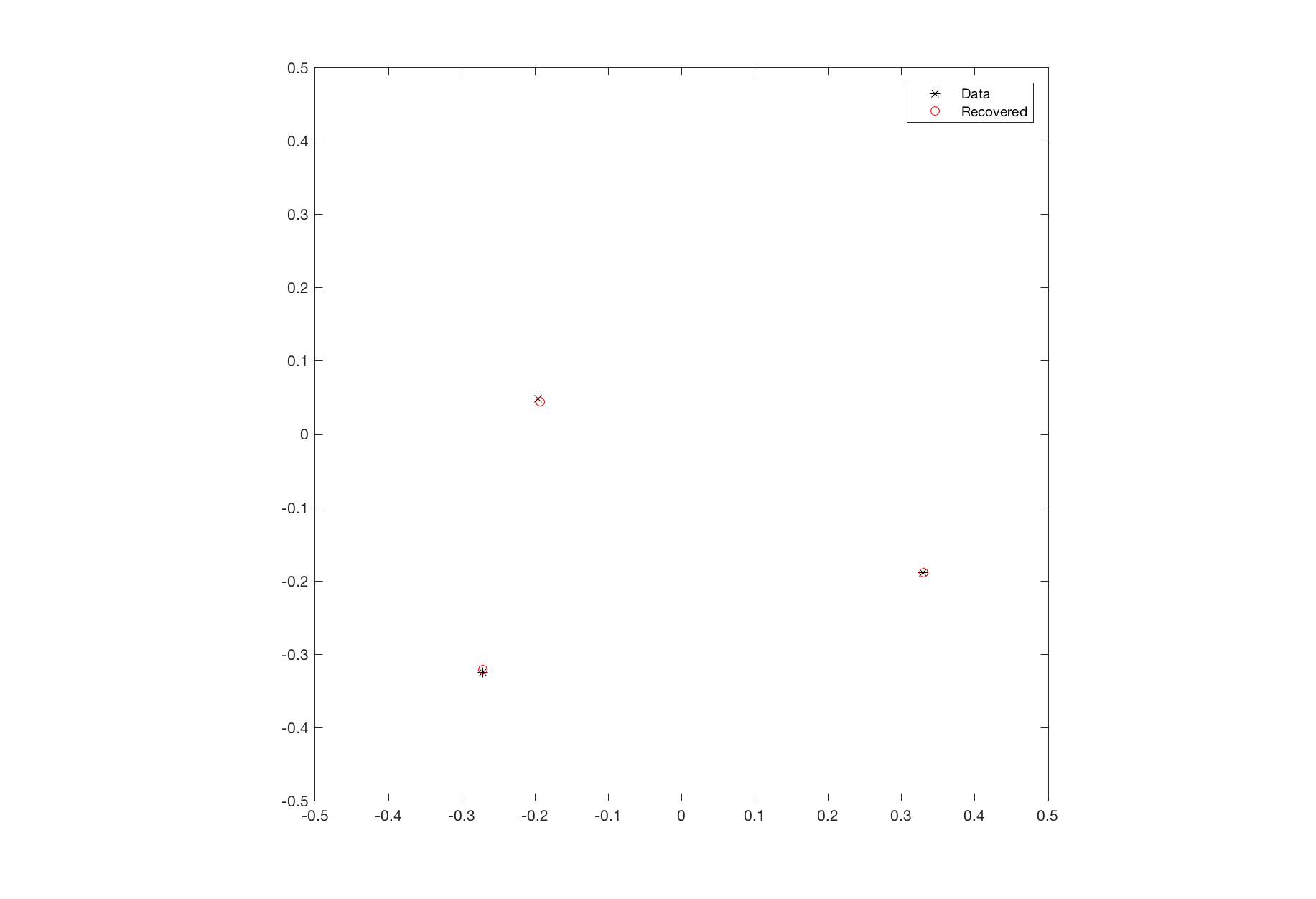}
& \includegraphics[width = 0.3\textwidth,trim={5cm 1cm 5cm 1cm},clip]{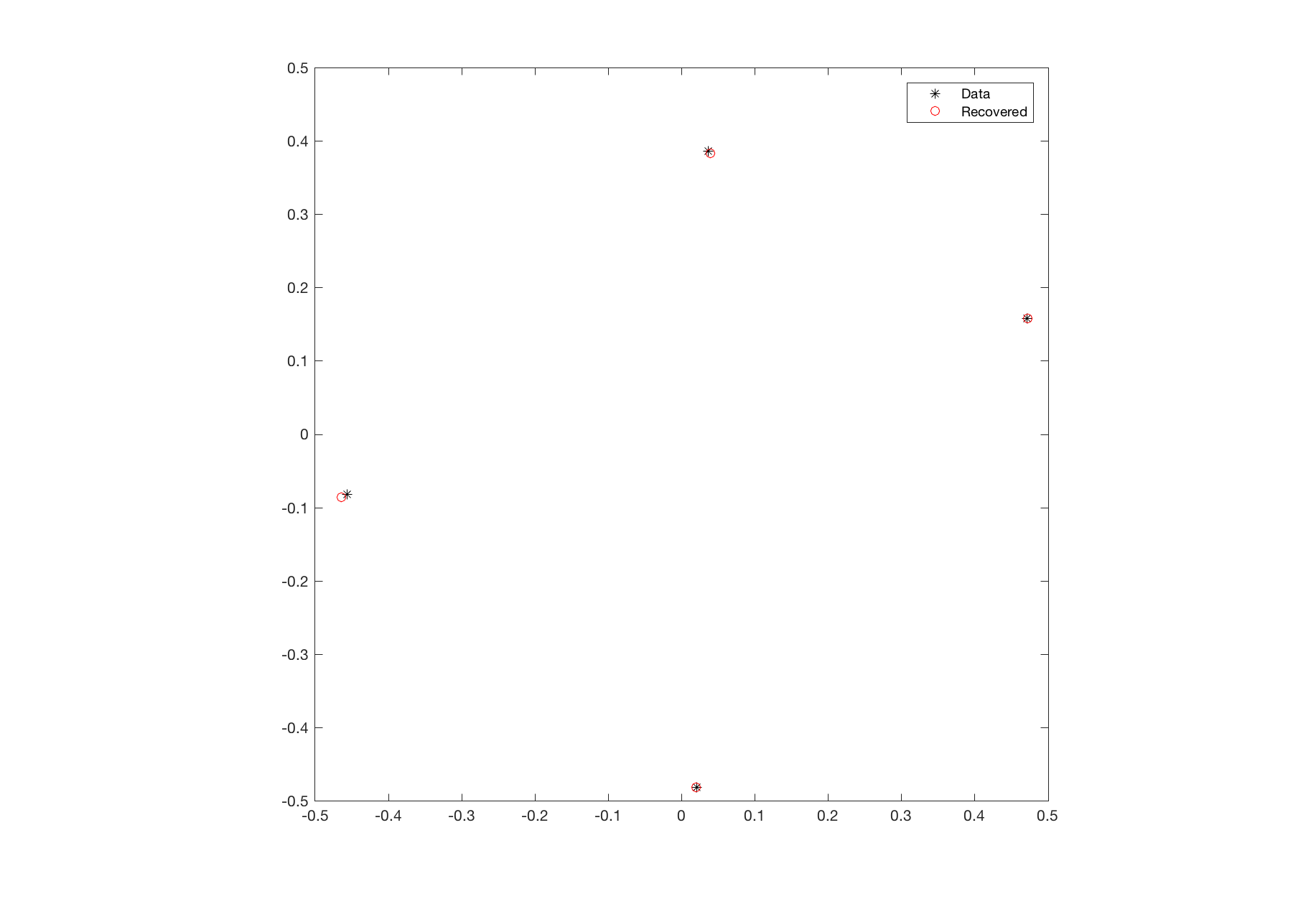}
& \includegraphics[width = 0.3\textwidth,trim={5cm 1cm 5cm 1cm},clip]{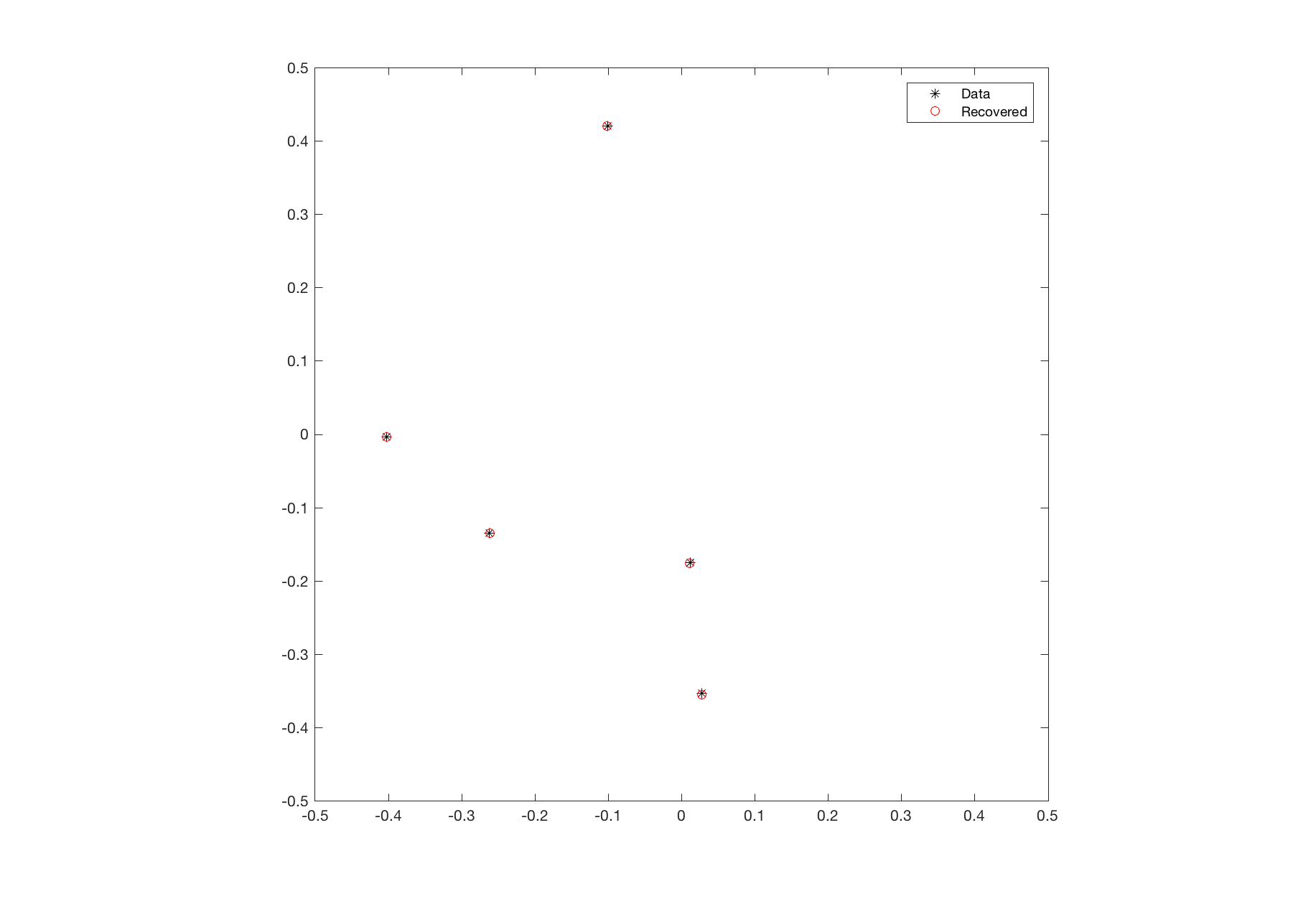}\\
 \includegraphics[width = 0.32\textwidth,trim={2cm 1cm 1cm 1cm},clip]{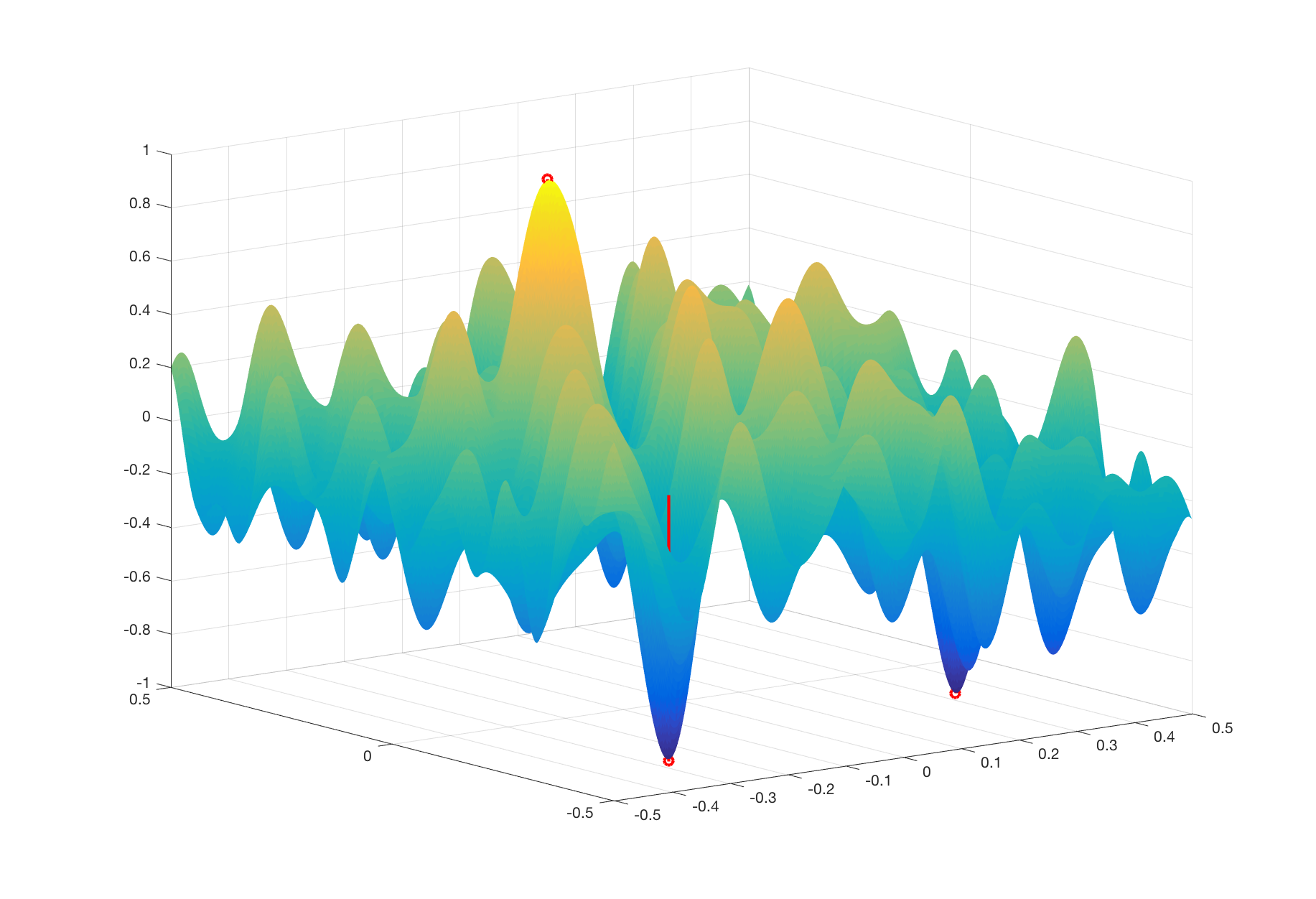}
& \includegraphics[width = 0.32\textwidth,trim={2cm 1cm 1cm 1cm},clip]{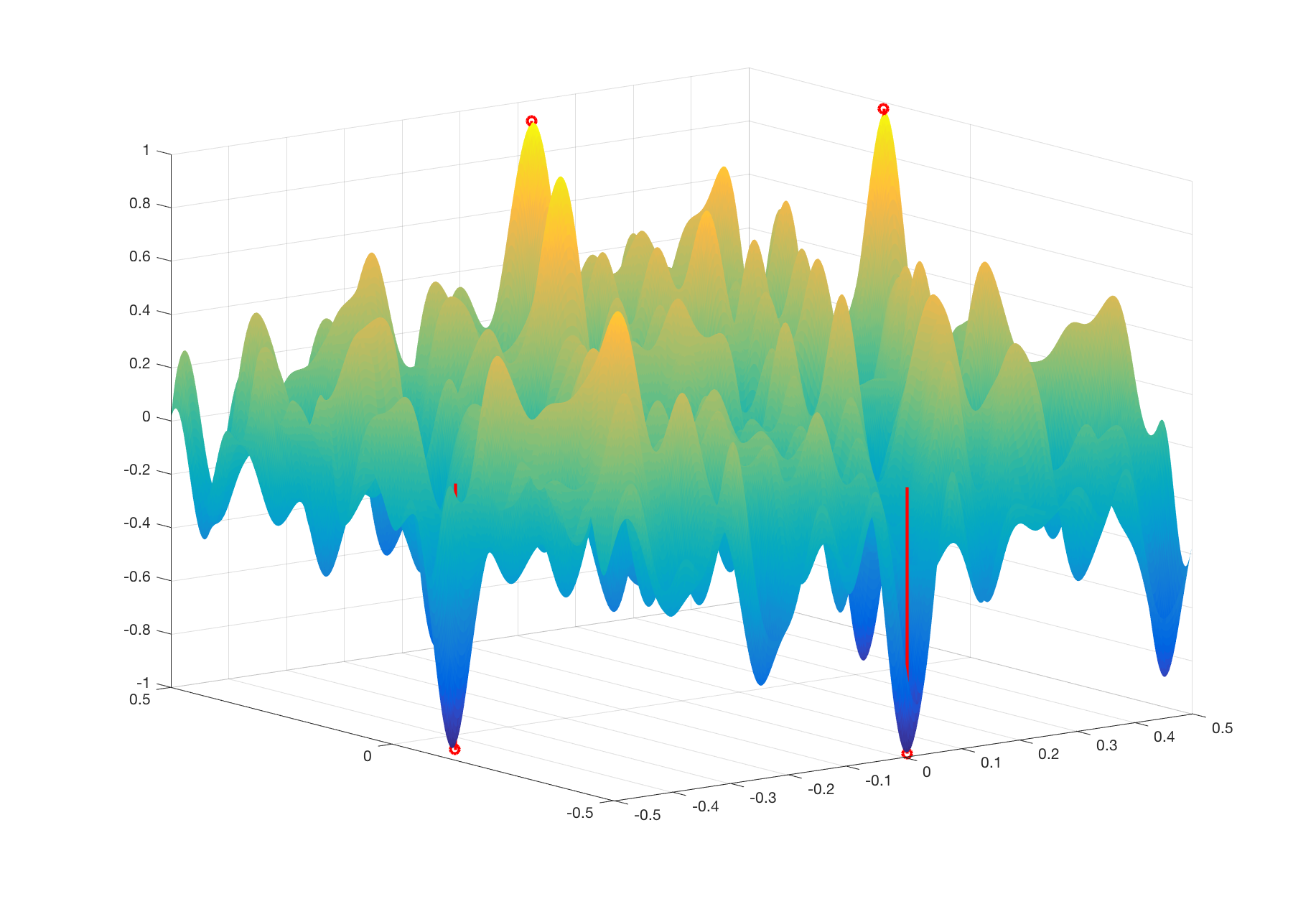}
& \includegraphics[width = 0.32\textwidth,trim={2cm 1cm 1cm 1cm},clip]{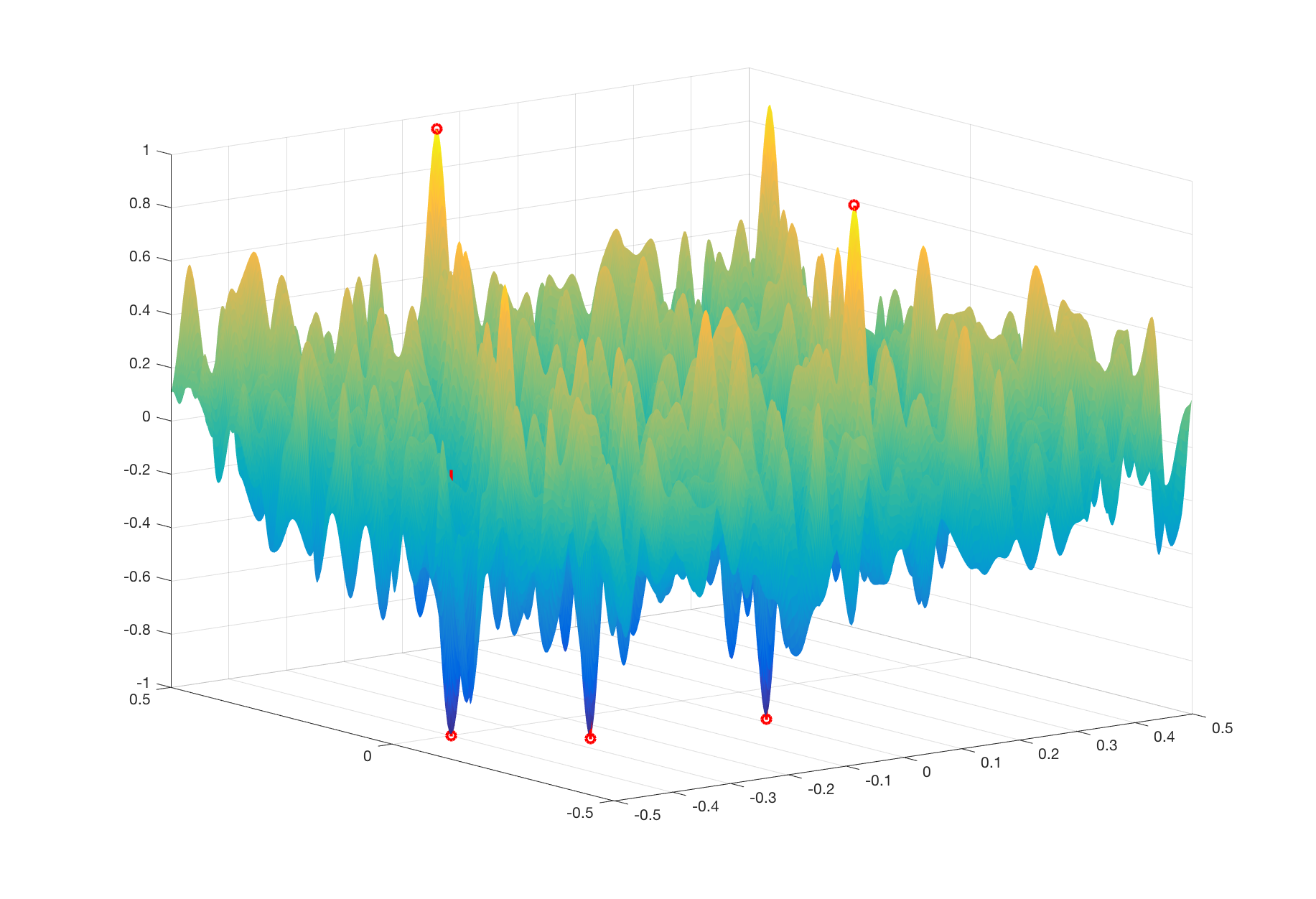}\\
  \includegraphics[width = 0.3\textwidth,trim={5cm 1cm 5cm 1cm},clip]{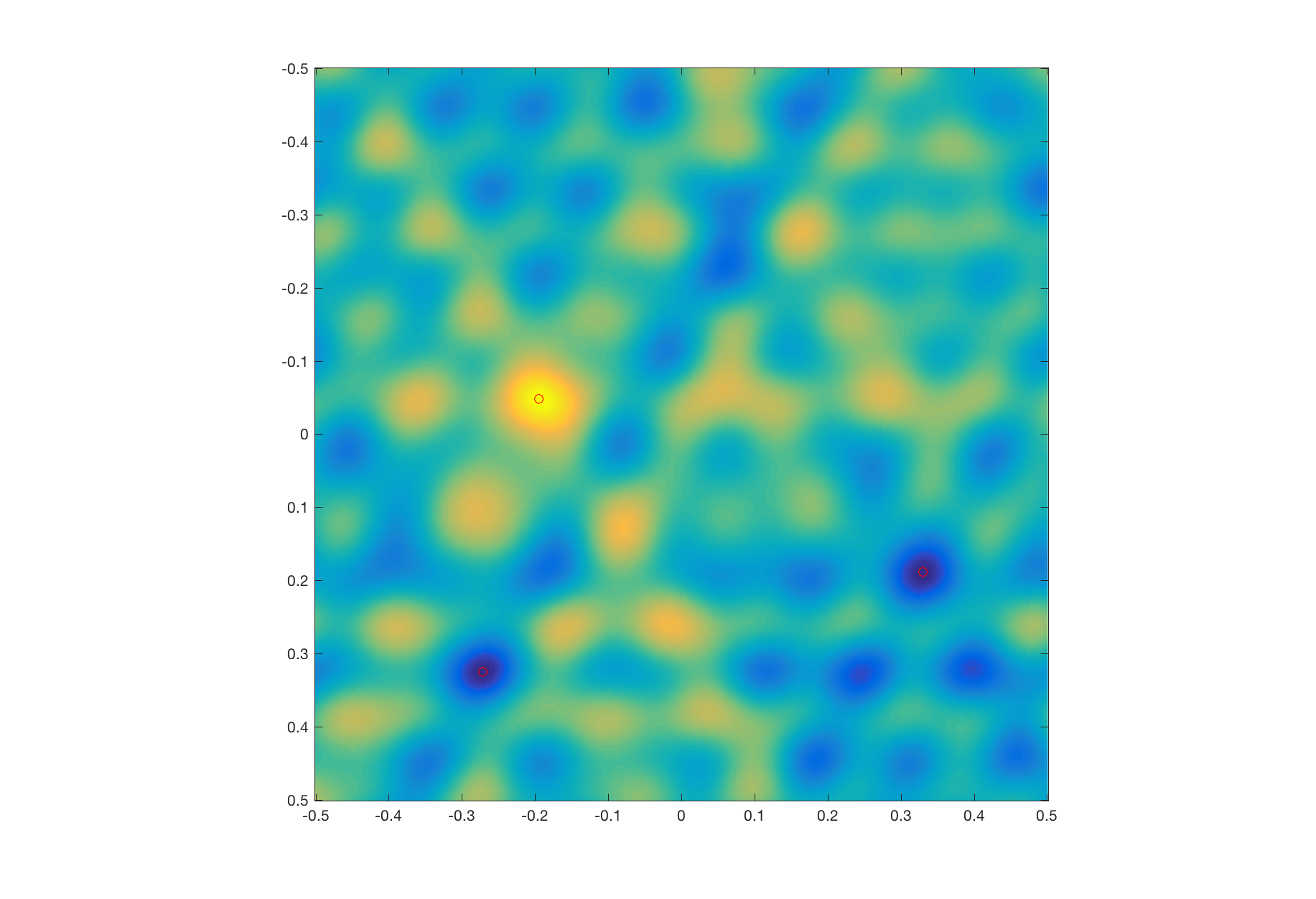} 
 & \includegraphics[width = 0.3\textwidth,trim={5cm 1cm 5cm 1cm},clip]{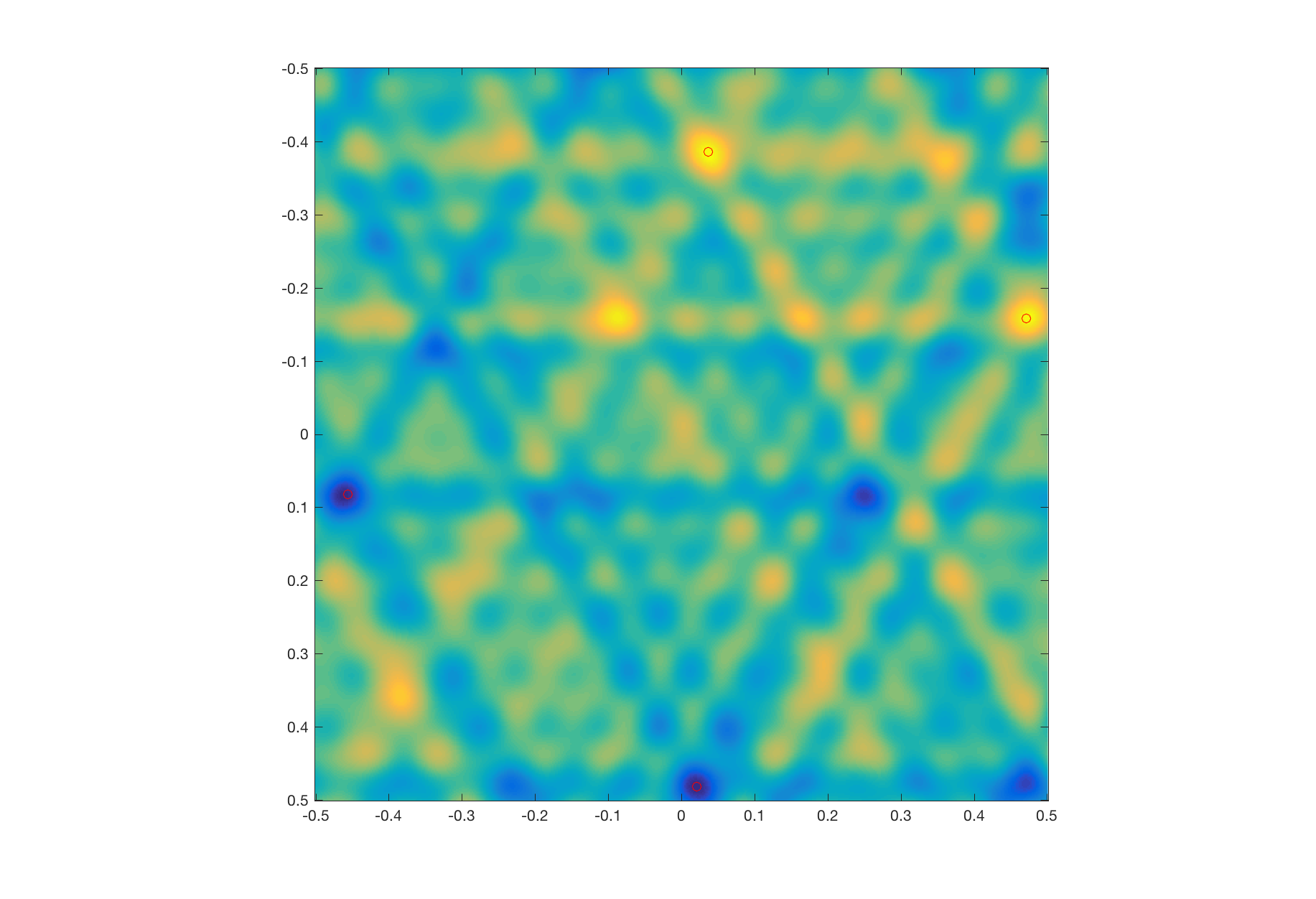}
& \includegraphics[width = 0.3\textwidth,trim={5cm 1cm 5cm 1cm},clip]{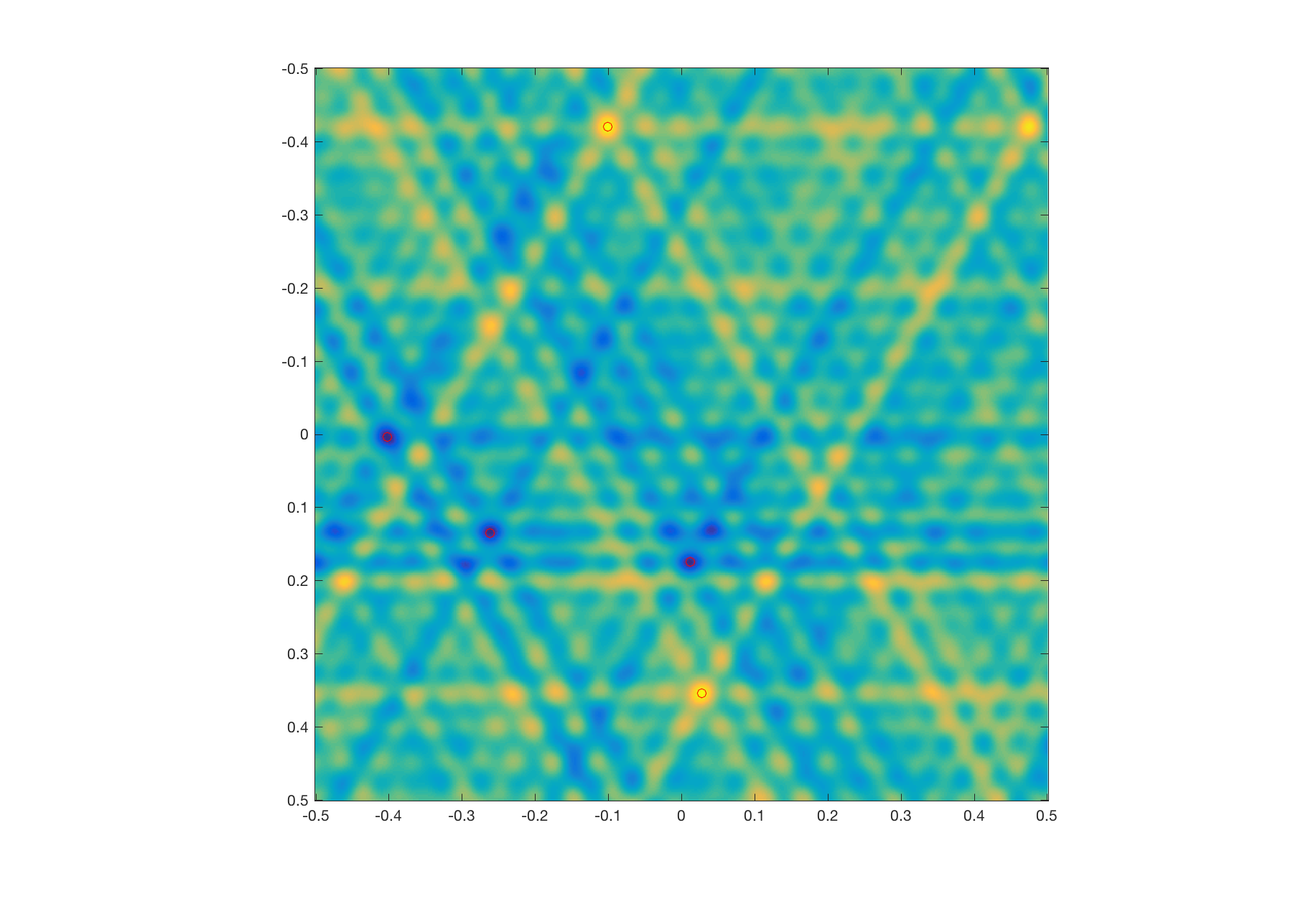}
\end{tabular}
\end{center}
\caption{\rev{Reconstruction of point sources when the Fourier measurements have a relative error of 0.15. The top row shows the original and reconstructed positions, the middle row shows the corresponding dual certificates (with the true positions shown in red) and the bottom row shows a 2-dimensional view of the dual certificates. Left column: $M=3$, $\nu_{\min} = 0.1208$, $\mathrm{Err_{pos} = 0.0061}$, $\mathrm{Err_{amp} = 0.0359}$. Middle column: $M=4$, $\nu_{\min} = 0.0704$, $\mathrm{Err_{pos} = 0.0098}$, $\mathrm{Err_{amp} = 0.0280}$. Right column: $M=5$, $\nu_{\min} = 0.0395$, $\mathrm{Err_{pos} = 0.0021}$, $\mathrm{Err_{amp} = 0.0261}$. \label{fig:noise}}}
\end{figure}

\section{Other related works and further comments}\label{sec:literature}

\subsection{Prony methods} 
The question of how many projections are required to recover $M$ sources has been of interest since the introduction of the Radon transform  by J. Radon in 1917 \cite{radon20051}. 
In particular, it is long known \cite{Renyi} that $M$ points are completely determined if its projection along $M+1$ lines are known. Since the early 2000's there has been numerous works describing how one can compute the parameters related to $M$ point sources using Prony's method, when sampling its Fourier transform along $M+1$ (or fewer) radial lines.
In \cite{vetterli_radon}, this problem was formulated in the framework of signals with a finite rate of innovation \cite{vetterli2002sampling} and it was shown, using Prony's method (also referred to as the annihilating filter method), that one can exactly recover the parameters of $M$ point sources from $2M$ samples along $M+1$ lines.  In \cite{plonka2013many}, it is shown that one can recover the parameters of $M$ point sources from $2M$ samples along 3 lines, which are adaptively chosen. This result is similar to that of \cite{potts2013parameter}, where the authors present an algorithm to sample only along sufficiently many lines. In these works, the  algorithms rely on the fact that the multi-dimensional parameter estimation problem can be projected onto a series of one-dimensional problems, under which one can simply apply the  Prony's method in the univariate setting.

\subsection{\revision{Links to the} Radon transform}
Here, we provide an informal discussion of the links between the results of this article and the Radon transform.

The Radon transform of a measure $\mu_0$, which consists of the collection of the projections  of the measure $\mu_0$ onto the line directed by $\theta \in \bbS^{d-1}$. That is, for $\theta\in \bbS^{d-1}$, we observe the measure $\projU_\sharp \mu\in \cM(\ell_\theta)$, where $\projU_\sharp \mu$ is the pushforward of $\mu$ by $P_{\ell_\theta}$.  
So, given any bounded continuous function $f:\ell_\theta\rightarrow \bbR$,
\begin{align}
  \int_{\ell_\theta} f(x')\diff \projU_\sharp \mu(x')&=\int_{\bbR^d}f(\projU(x))\diff \mu(x).
\end{align}
For example, in the particular case where $\mu$ has density $\rho\in L^1(\bbR^d)$ with respect to the $d$-dimensional Lebesgue measure $\cL^d$, then $\projU_\sharp \mu$ has density
\begin{equation*}
  \rho_\theta(s)= \int_{\ell_\theta^\bot} \rho(s\theta +v)\diff\cL^{d-1}(v). 
\end{equation*}
with respect to the Lebesgue measure $\cL^1$ on $\ell_\theta$, where $\ell_\theta^\bot$ denotes the orthogonal space to $\ell_\theta$.

The Fourier slice theorem states that the slice of $\cF \mu$ in the direction $\theta$ is the one-dimensional Fourier transform of $\projU_\sharp \mu$, so, for all $t\in\bbR$,
\begin{align*}
 \cF \mu(t\theta)&=\int_{\bbR^d}e^{-2\imath\pi\ip{x}{t\theta}}\diff \mu(x)
                                   =\int_{\bbR^d}e^{-2\imath\pi\ip{\projU(x)}{t\theta}}\diff \mu(x)
                                   =\int_{\ell_\theta}e^{-2\imath\pi t \ip{x'}{\theta}}\diff \projU_\sharp \mu(x').
\end{align*}
Consequently, given the Radon transform of $\mu_0\in \cM(X)$ along a finite number of directions $\Theta$, by letting $\eta_\theta = P_{\ell_\theta} \sharp \mu_0$,
$$
\min_{\mu\in\cM(X)} \norm{\mu}_{TV} \text{ subject to } P_{\ell_\theta} \sharp \mu = \eta_\theta, \quad \forall ~ \theta\in\Theta,
$$
is equivalent to 
$$
\min_{\mu\in\cM(X)} \norm{\mu}_{TV} \text{ subject to } (\cF \mu)(k\theta) = ( \cF \mu_0)(k\theta), \quad \forall ~ \theta\in\Theta, \quad \forall ~k\in\bbZ
$$
(since $\domain=\overline{B}(0,1/2)$, there is no aliasing artifact due to the $1$-periodization of $P_{\ell_\theta} \sharp \mu_0$). We finally remark that in practice, the presence of white noise will typically render the higher Fourier coefficients unusable. Therefore, one is led to consider the above minimization problem with $\abs{k}\leq N$
for some $N\in\bbN$. This is precisely the problem studied in this paper and has been considered for applications such as electron tomography \cite{leary2013compressed}.

\subsection{Generalizations \revision{to shift invariant linear operators}}\label{sec:generalizations}

Suppose that $\Phi$ is a Fourier sampling operator, let  $\cS(X)$  denote the Schwartz space and let $\cL:\cS(X)\to \cM(X)$ be any differential operator. Let $$\cM_\cL(X) = \br{f\in\cS(X): \norm{\cL f}_{TV} = \sup_{\varphi\in C_c^\infty(X): ~\norm{\varphi}_\infty = 1} \Re\ip{\cL f}{\varphi}<\infty}.
$$
We now consider the generalized total variation minimization problem, given measurements $y_0 = \Phi f_0$ for some $f_0\in\cM_\cL(X)$:
\be{\label{eq:gtv}
\inf_{f\in \cM_\cL} \norm{\cL f}_{TV} \text{ subject to } \Phi(f) = y_0.
}
This problem is a specialized form of the generalized total variation minimization problems recently considered in \cite{unser2016splines}, where $\cL$ is a linear shift invariant operator, which is associated with a finite dimensional null space  $\cN(\cL)$ and a Green's function $\rho \in \cM_\cL$ such that $\cL \rho = \delta$, the Dirac measure. Examples of differential operators for \eqref{eq:gtv} include the Laplacian and the fractional Laplacian, and the associated Green's functions are  polyharmonic splines \cite{madych1990polyharmonic}.

The main result of \cite{unser2016splines} establishes that, under mild conditions on the sampling operator $\Phi$ of rank $N$, any solution to \eqref{eq:gtv} is necessarily of the form
$$
f(x) = \sum_{k=1}^K \alpha_k \rho (x-x_k) + \sum_{n=1}^{N_0} \beta_n g_n(x),
$$
where $\{g_n:n=1,\ldots, N_0 \}$ is a basis for $\cN(\cL)$, $\{x_k\}_{k=1}^K\subset \bbR^d$, $\alpha\in\bbR^K$ and $\beta\in \bbR^{N_0}$, with $K\leq N-N_0$.
In particular, any solution to \eqref{eq:gtv} satisfies $\sum_{j=1}^K a_j \delta_{x_j}$. Now, returning to the case where $\Phi$ is a Fourier operator, given $f_0$  such that $\cL f_0 = \sum_{j=1}^M a_j \delta_{x_j}$, suppose that the following hold.
\begin{enumerate}
\item There exists a trigonometric polynomial $p= \Phi^* q$ such that
$$
p(x_j) = \sgn(a_j), \qquad \norm{p}_\infty \leq 1,
$$
and the extremal points of $p$ form a discrete set $\{\supx_j\}_{j=1}^{\tilde M}$ such that
$$
c\in \bbC^{\tilde M}\mapsto \Phi\left( \sum_{j=1}^{\tilde M} c_j \delta_{\supx_j}\right)\qquad \text{ is injective.}
$$
\item $\Phi$ is injective on $\cN(\cL)$, the null space of $\cL$.
\end{enumerate}
Then, one can show that $f_0$ is the unique minimizer to \eqref{eq:gtv}. Therefore, to understand the recoverability conditions on  the sampling operator $\Phi$, one simply needs to understand the construction of the appropriate interpolating polynomial, and the behaviour of $\Phi$  on $\cN_\cL$.  Therefore, for the Fourier operator restricted to radial lines, the techniques developped in this paper should be readily extendible for the analysis of \eqref{eq:gtv}.

%

\section{Proofs}\label{sec:proofs}
\label{sec:recovery}

\subsection{Preliminary results}

 We first require a definition.

\begin{definition}
Let $\mu \in \cM(\domain)$. Given $\theta \in \bbS^{d-1}$, let the one-dimensional subspace $\ell_\theta = \{ t \theta : t\in \bbR\}$ denote its corresponding radial line.
We say that $\theta \in \bbS^{d-1}$ is a projecting direction for $\mu$ if the projection  
$P_{\ell_\theta}: \domain \to \ell_\theta$, $ x \mapsto \ip{x}{\theta}\theta$ is injective on $\supp{\mu}$.
\end{definition}

Observe that in order for (A1) to make sense, for any values of $\sgn(a_j)$, it is necessarily the case that each element $\theta$ in $\setdir$ is a projecting direction for $\mu_0$. Note also that if the extremal points of each $p_\theta$ in (A1) is precisely $\{x_1,\ldots, x_M\}$, then the set $\supsat$ in \eqref{eq:Delta_bar} can be written as
\be{\label{eq:Delta_bar2}
\supsat = \bigcap_{\theta \in \Theta}\bigcup_{j=1}^M (x_j + \ell_\theta^\perp).
}
The following lemmas describe conditions under which this set is precisely the original positions $\{x_j:j=1,\ldots, M\}$.

\begin{lemma} \label{lem:holdae}
\begin{itemize}
\item[(i)] For almost every finite choice of $\setdir \subset \bbS^{d-1}$, each element $\theta$ in $\Theta$ is a projecting direction for $\mu_0$,
\item[(ii)] Let $\supsat$ be as defined in \eqref{eq:Delta_bar2}.  If  $\abs{\Theta}=d$, then $\supsat$ is a discrete set consisting of at most $M^d$ elements. Moreover,  for almost every choice of $\setdir \subset \bbS^{d-1}$ with $\abs{\Theta}\geq d+1$,  $\supsat  = \{x_1,\ldots, x_M\}$.
\end{itemize}
 
\end{lemma}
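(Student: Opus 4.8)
The plan is to reduce everything to one elementary fact. Since the points $x_j$ are distinct, for each pair $j\neq k$ the vector $x_j-x_k$ is nonzero, so the set of directions $\{\theta\in\bbS^{d-1}:\ip{x_j-x_k}{\theta}=0\}$ is the intersection of $\bbS^{d-1}$ with a fixed hyperplane through the origin, i.e. a great subsphere $\bbS^{d-2}$, which has measure zero in $\bbS^{d-1}$ (here $d\geq 2$). Every measure-zero statement below will be a finite union of sets of this type.

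For part (i) I would simply note that $\theta$ is a projecting direction for $\mu_0$ precisely when $P_{\ell_\theta}$ separates the $x_j$, that is $\ip{x_j-x_k}{\theta}\neq 0$ for all $j\neq k$. Hence the set of non-projecting directions is the finite union over pairs $(j,k)$ of the null great subspheres above, and is therefore null. Viewing a finite $\setdir=\{\theta_1,\dots,\theta_L\}$ as a point of $(\bbS^{d-1})^L$, the event that some $\theta_i$ fails to be projecting is a finite union of sets of the form (null set in the $i$-th factor)$\times$(the remaining factors), which is null by Fubini.

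For the first assertion of (ii), I would use that whenever $\{\theta_1,\dots,\theta_d\}$ are linearly independent (the generic case, and the one assumed in Theorem~\ref{thm2}), a point $x$ lies in $\supsat_{\{\theta_1,\dots,\theta_d\}}$ if and only if for each $i$ there is an index $j_i$ with $\ip{x}{\theta_i}=\ip{x_{j_i}}{\theta_i}$. Fixing a tuple $(j_1,\dots,j_d)\in\{1,\dots,M\}^d$ yields the linear system $\ip{x}{\theta_i}=\ip{x_{j_i}}{\theta_i}$, $i=1,\dots,d$, whose coefficient matrix has rows $\theta_1,\dots,\theta_d$; linear independence makes it invertible, so there is a unique solution $x$. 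Thus $\supsat_{\{\theta_1,\dots,\theta_d\}}$ is contained in the set of these solutions, one per tuple, giving a finite (hence discrete) set of at most $M^d$ points. For the second assertion, I would first observe that $\{x_1,\dots,x_M\}\subseteq\supsat$ always holds, since $0\in\ell_\theta^\perp$ forces $x_j\in x_j+\ell_\theta^\perp$ for every $\theta$. For the converse, I would condition on the first $d$ directions: for a.e. $(\theta_1,\dots,\theta_d)$ the previous step gives a finite set containing $\{x_1,\dots,x_M\}$, and I write $Z$ for the finite set of spurious points $z\in\supsat_{\{\theta_1,\dots,\theta_d\}}\setminus\{x_1,\dots,x_M\}$. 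Each such $z$ differs from every $x_j$, so the set of $\theta_{d+1}$ with $\ip{z-x_j}{\theta_{d+1}}=0$ for some $j$ is again a finite union of great subspheres, hence null; taking the union over the finitely many $z\in Z$, for a.e. $\theta_{d+1}$ no spurious point satisfies the extra constraint $\bigcup_j(x_j+\ell_{\theta_{d+1}}^\perp)$. Therefore $\supsat_{\{\theta_1,\dots,\theta_{d+1}\}}=\{x_1,\dots,x_M\}$, and since adjoining the remaining directions of $\setdir$ only shrinks the intersection while never removing the $x_j$, we conclude $\supsat=\{x_1,\dots,x_M\}$.

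The main obstacle is the measure-theoretic bookkeeping in this last step: the spurious set $Z$ depends on $(\theta_1,\dots,\theta_d)$, so one cannot invoke a single fixed null set. The clean resolution is to work inside $(\bbS^{d-1})^{d+1}$, show that for each fixed admissible $(\theta_1,\dots,\theta_d)$ the bad slice in the $\theta_{d+1}$ variable is null, and then conclude by Fubini that the total bad set is null; the directions $\theta_{d+2},\dots,\theta_L$ are irrelevant and integrate out trivially. A minor point worth flagging is that the solutions $x$ of the linear systems need not lie in $\domain$, but this is harmless, as we only use an upper bound on their number before eliminating the spurious ones.
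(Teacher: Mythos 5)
Your proof is correct and follows essentially the same route as the paper's: non-projecting directions lie on finitely many great subspheres, $d$ independent directions reduce $\supsat$ to at most $M^d$ solutions of linear systems, and the $(d+1)$-th direction generically eliminates the spurious points. Your explicit Fubini bookkeeping for the dependence of the spurious set $Z$ on $(\theta_1,\dots,\theta_d)$ is a slightly more careful rendering of a step the paper leaves implicit, but it is not a different argument.
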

\prf{
  We first consider (i). A direction $\theta \in \bbS^{d-1}$ is non-projecting for $\mu_0$ if and only if there exists $x_j\neq x_k$ such that $x_j-x_k\in \ell_\theta^\perp$ or equivalently, $\theta \in \ell_{x_j-x_k}^\perp$. So,  $\theta \in \bbS^{d-1}$  is a non-projecting direction if and only if it belongs to one of the $M(M-1)/2$ hyperplanes $\br{\ell_{x_j-x_k}^\perp}_{1\leq j<k\leq M}$. However, since $ \bbS^{d-1} \cap \left(\bigcup_{1\leq j<k\leq M}\ell_{x_j-x_k}^\perp\right)$ is of measure 0 (with respect to the Hausdorff measure on $\bbS^{d-1}$), (i) is satisfied for almost every choice of $\setdir$ from $\bbS^{d-1}$.

  For  (ii), observe that for almost every choice of $d$ unit vectors $\setdir':= \{\theta_1,\ldots, \theta_d\}$, $\setdir'$ spans $\bbR^d$. Then $\bigcap_{k=1}^d \bigcup_{j=1}^M (x_j + \ell_{\theta_k}^\perp)$ consists of at most $M^d$ points. Indeed, let $z\in\bbR^d$. Then $z\in\bigcap_{k=1}^d \bigcup_{j=1}^M (x_j + \ell_{\theta_k}^\perp)$ if and only if for $1\leq k \leq d$, there exists $j_k \in \{1, \ldots,M\}$ such that $z\in x_{j_k}+  \ell_{\theta_k}^\perp$. That is equivalent to $\ip{\theta_k}{z} =\ip{\theta_k}{x_{j_k}}$ for $1\leq k \leq d$, or equivalently
\begin{equation}\label{eq:innerprod}
  \begin{pmatrix}
    \theta_1 & \theta_2 &\cdots & \theta_d
  \end{pmatrix}^T     z
=  b\end{equation}
for some $b\in \bbR^d$ such that  $b_{k}:= \ip{\theta_k}{x_{j_k}}$. There are at most $M^d$ possible choices of $b$, and for each $b$, there is a unique solution $z$ to the system~\eqref{eq:innerprod}.

Now, let us denote by $Z$ the set of all such $z$ (so that $\abs{Z}\leq M^d$). So, $\setdir' \cup \{\theta_{d+1}\}$ satisfies (ii) provided that there does not exist $z\in Z\setminus \{x_1,\ldots x_M\}$ such that
$z \in \bigcup_{i=1}^M (x_j + \ell_{\theta_{d+1}}^\perp)$. This is equivalent to choosing $\theta_{d+1}$ such that 
$$
\theta_{d+1} \in \bbS^{d-1} \setminus H, \qquad H := \bigcup\br{\ell^\perp_{x_j - z}: 1\leq j \leq M, z\in Z\setminus\{x_1,\ldots,x_M\} }.
$$
Observe that $H$ is the union of finitely many hyperplanes and so, just as in \rev{(i)}, $H\cap \bbS^{d-1}$ is of measure 0 and almost every choice of $\theta_{d+1}$   will ensure that $\setdir' \cup \br{\theta_{d+1}}$ satisfies (ii). In particular, if $\abs{\setdir}=d+1$, then (ii) is satisfied for almost every choice of $\setdir$. \rev{The conclusion thus follows by observing that $\{x_1,\ldots, x_M\}\subset \tilde \Delta$ for any choice of $\Theta$.}

}

The next lemma deals with the converse situation where the set of directions $\setdir$ is fixed but the positions $\{x_1,\ldots, x_M\}$ are random.

\begin{lemma}\label{lem:indep}
Let $\Theta \subset \bbS^{d-1}$ be a set of cardinality $L\geq d+1$. Assume that any $d$ elements of $\Theta$ are linearly independent.  Then for almost every choice  of $M$ points $\br{x_1,\ldots, x_M}$ in $X$, $\supsat = \{x_1,\ldots, x_M\}$.
\end{lemma}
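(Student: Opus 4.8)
The plan is to mirror the proof of Lemma~\ref{lem:holdae}(ii), exchanging the roles of the positions and the directions: here the directions are fixed and the randomness sits on the positions. Since $\{x_1,\ldots,x_M\}\subset\supsat$ for \emph{every} configuration, only the reverse inclusion is at stake, and it suffices to establish it using just $d+1$ of the directions, say $\theta_1,\ldots,\theta_{d+1}$; write $\supsat'$ for the corresponding intersection, so that $\{x_1,\ldots,x_M\}\subset\supsat\subset\supsat'$. By hypothesis $\{\theta_1,\ldots,\theta_d\}$ is a basis of $\bbR^d$, so exactly as in~\eqref{eq:innerprod} every point of $\bigcap_{k=1}^d\bigcup_{j=1}^M(x_j+\ell_{\theta_k}^\perp)$ is one of at most $M^d$ candidate points $z(j_1,\ldots,j_d)$, the unique solution of $\ip{\theta_k}{z}=\ip{\theta_k}{x_{j_k}}$ for $k=1,\ldots,d$. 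Hence $\supsat'$ is finite and consists of exactly those candidates that additionally meet one of the hyperplanes associated with $\theta_{d+1}$.

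The diagonal tuples $j_1=\cdots=j_d=j$ give $z=x_j$ and are harmless. For any other tuple, a candidate $z=z(j_1,\ldots,j_d)$ lies in $\supsat'$ only if there is an index $j_{d+1}$ with $\ip{\theta_{d+1}}{z}=\ip{\theta_{d+1}}{x_{j_{d+1}}}$. Writing $\theta_{d+1}=\sum_{k=1}^d c_k\theta_k$ and substituting the defining relations of $z$, this condition becomes the homogeneous linear equation in the positions
\[
\sum_{k=1}^d c_k\ip{\theta_k}{x_{j_k}}=\ip{\theta_{d+1}}{x_{j_{d+1}}}.
\]
I would then view the left minus right side as a linear functional $\sum_{i=1}^M\ip{w_i}{x_i}$ of $(x_1,\ldots,x_M)\in X^M$, with $w_i=\sum_{k:\,j_k=i}c_k\theta_k-[j_{d+1}=i]\,\theta_{d+1}$.

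The crux, and the only place the hypothesis is used, is that every $c_k$ is nonzero: were some $c_{k_0}=0$, then $\theta_{d+1}$ would lie in the span of the $d-1$ vectors $\{\theta_k:k\neq k_0\}$, making the $d$ vectors $\{\theta_k:k\neq k_0\}\cup\{\theta_{d+1}\}$ dependent, contrary to assumption. Since the $\theta_k$ are independent and the $c_k$ nonzero, the coefficient $w_{j_{d+1}}=\sum_{k:\,j_k=j_{d+1}}c_k\theta_k-\theta_{d+1}$ vanishes precisely when $j_1=\cdots=j_d=j_{d+1}$. Thus for every non-diagonal tuple $w_{j_{d+1}}\neq 0$, so the displayed equation is a genuine homogeneous linear constraint on $X^M$ and its solution set has Lebesgue measure zero. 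Taking the union over the finitely many tuples $(j_1,\ldots,j_{d+1})$, we conclude that for almost every configuration no non-diagonal candidate meets the $\theta_{d+1}$-hyperplanes, whence $\supsat'=\{x_1,\ldots,x_M\}$ and therefore $\supsat=\{x_1,\ldots,x_M\}$. The main obstacle is exactly this non-degeneracy check---guaranteeing that the linear equation is non-trivial for \emph{every} off-diagonal index tuple---which the assumption that any $d$ elements of $\Theta$ are linearly independent is tailored to provide.
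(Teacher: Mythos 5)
Your proof is correct, but it follows a genuinely different route from the paper's. The paper proves Lemma~\ref{lem:indep} by induction on the number of points: conditioning on the event that the first $n$ points already satisfy the conclusion, it shows that a spurious intersection point created by a new random point $x_{n+1}$ would force $x_{n+1}-z$ into $W^\perp$ for a large span $W$ of directions while $z$ sits in an affine subspace of dimension at most $d-2$, so that $x_{n+1}$ must lie in a finite union of affine subspaces of dimension at most $d-1$; the chain rule for conditional probabilities then finishes the argument. You instead transplant the candidate-enumeration device from the paper's proof of Lemma~\ref{lem:holdae}(ii) (there used with fixed points and varying directions): keep only $d+1$ directions, parametrize the at most $M^d$ candidates $z(j_1,\dots,j_d)$ by solving the system \eqref{eq:innerprod}, and show that each off-diagonal candidate passes the $\theta_{d+1}$-test only on the zero set of an explicit nonzero linear functional on $X^M$ --- the nonvanishing of $w_{j_{d+1}}=-\sum_{k:\,j_k\neq j_{d+1}}c_k\theta_k$ being exactly where the hypothesis that any $d$ directions are independent enters (all $c_k\neq 0$ plus independence of $\theta_1,\dots,\theta_d$). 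Both arguments reduce to ``finitely many proper affine subspaces are null,'' but yours is direct rather than inductive, works in the product space $X^M$ with an explicit description of the exceptional set, uses only $d+1$ of the $L$ directions, and avoids the paper's slightly delicate bookkeeping about the size of the largest subset $\Theta'$ and the conditional-probability formulation. The one point worth stating explicitly in a write-up is that the zero set of a nonzero linear functional on $(\bbR^d)^M$ meets $X^M$ in a set of $dM$-dimensional Lebesgue measure zero, and that a union over the finitely many index tuples remains null; with that, your argument is complete.
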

\begin{remark}
In the case of  $d=2$, any 2 distinct elements of $\bbS^1$ will be linearly independent. In general, if we let $0 <t_1<t_2<\cdots <t_L$, then any $d$ columns of the matrix
$$
\begin{pmatrix}
1 & 1 &\cdots &1\\
t_1 & t_2& \cdots &t_L\\
\vdots &\vdots& & \vdots\\
t_1^{d-1} & t_2^{d-1} & \cdots & t_L^{d-1}
\end{pmatrix}
$$
forms a Vandermonde matrix and hence forms a linearly independent set. So, an example of valid set $\Theta$ for Lemma \ref{lem:indep} is the set $\{ u_j:~j=1,\ldots, L\}$ with
$$
 u_j := v_j/\norm{v_j}_2\qquad v_j := (1,t_j,t_j^2,\cdots, t_j^L).
$$
\end{remark}

\begin{proof}
Suppose we are given $n$ points $\br{x_1,\ldots, x_n}$ such that
\be{\label{eq:assump_lem_indep}
\bigcap_{k=1}^L \bigcup_{j=1}^n (x_j + \ell^\perp_{\theta_k}) = \br{x_1,\ldots, x_n}.
}
Note that this is automatically true if $n = 1$.
Let $x_{n+1}$ be a random variable in $X$. Let $P_{n+1}$ be the probability that
\bes{
\bigcap_{k=1}^L \bigcup_{j=1}^{n+1} (x_j + \ell^\perp_{\theta_k}) = \br{x_1,\ldots, x_{n+1}}.
}
 Suppose that there exists
\be{\label{eq:a3}
z\in \bigcap_{k=1}^L \bigcup_{j=1}^{n+1} (x_j + \ell^\perp_{\theta_k}) \setminus \br{x_1,\ldots, x_{n+1}}.
}
Then, there exists $\theta\in \Theta$ such that $\ip{z}{\theta} = \ip{x_{n+1}}{\theta}$, otherwise, we would violate assumption \eqref{eq:assump_lem_indep}. Also, since $z\neq x_{n+1}$, there exists $\subsetdir \subset \Theta$ of cardinality at least $L-d+1$ such that
\be{\label{eq:a2}
z\in \bigcap_{\theta\in\subsetdir} \bigcup_{j=1}^n(x_j+\ell_\theta^\perp).
}
Indeed, if this was not the case, then there exists $\{\theta_{k_1},\ldots,\theta_{k_d}\}\subset \Theta$ such that
$$
\ip{z- x_{n+1}}{\theta_{j_k}} = 0, \qquad k=1,\ldots, d.
$$
Since any $d$ elements of $\Theta$ form a basis of $\bbR^d$, this would imply that $z = x_{n+1}$ and thus contradicting \eqref{eq:a3}.

Let $\subsetdir\subseteq \Theta$ be the largest subset for which \eqref{eq:a2} holds. Note that  $\abs{\subsetdir} = d' \in [2,d]$. Then,
\be{\label{eq:a4}
\ip{z-x_{n+1}}{\theta} = 0 \qquad \forall ~ \theta \in \Upsilon:= \Theta\setminus \subsetdir.
}
From \eqref{eq:a2}, we see that $z$ belongs to the intersection of $d'$ hyperplanes. In particular, it belongs to a subspace $V$ of dimension $d-d'\leq d-2$. Also, from \eqref{eq:a4}, we have that $x_{n+1} -z \in W^\perp$, where $W = \mathrm{span}(\theta\in \Upsilon)$. Note that $\Upsilon$ contains $L-d'\geq d+1-d'$ elements. So, $W$ is a subspace of dimension at least $d+1-d'$ and $W^\perp$ is of dimension at most $d'-1$. So, $x_{n+1} \in V+W^\perp$ is contained in a subspace of dimension at most $d-1$, which is a set of measure 0 in $X$. Also, since there are finitely many combinations of sets of cardinality $d'\in [2,d]$ in $\Theta$, and the  union of finitely many sets of zero measure is also of zero measure, \eqref{eq:a3} holds with probability 0. 
Therefore, $P_{n+1} = 1$ and by applying the chain rule for probabilities, the assertion of this lemma holds.
\end{proof}

\subsection{The minimum separation distance and the sampling range}

Solutions to \eqref{eq:tvmin} in the case $d=1$  have been extensively studied~\cite{de2012exact,candes2014towards}. These works established conditions based on the \textit{minimum separation distance}, under which one can construct one-variate trigonometric polynomials which interpolate the sign pattern related to the underlying measure.

\begin{lemma}\cite{tang2013compressed}\label{lem:dual_poly_1d}
  Let $\Delta = \{ t_j: j=1,\ldots, M\}\subset \bbT$ and suppose that $\mdist(\Delta) \geq 2/N$.  
  \begin{itemize}
  \item[(i)] Let $\{s_j\}_{j=1}^M\subset \br{x\in\bbC: ~ \abs{x}=1}$. If $\Gamma = \{-N,\ldots, N\}$, then the operator  $\Psi:\bbC^M \to \bbC^m$ with $\Psi a = \left( \sum_{j=1}^M a_j e^{-i2\pi k t_j}\right)_{k\in \Gamma}$ is injective and there exists $q = \sum_{j\in\Omega}\alpha_j e^{i2\pi j\cdot}$ such that 
$$
q(t_j) = s_j, \quad j=1,\ldots, M \quad\text{and}\quad \abs{q(t)}<1, \quad \forall t\not\in \Delta.
$$
\item[(ii)]  Let $\{s_j\}_{j=1}^M$ be drawn i.i.d. from the uniform
distribution on the complex unit circle. If $\Gamma \subseteq \{-N,\ldots, N \}$ consists of $m$ indices chosen uniformly at random with
$$
m\gtrsim \max \{\log^2(N/\delta), M\log(M/\delta)\log(N/\delta)  \},
$$
then the conclusions of (i) hold with probability at least $1-\delta$.
  \end{itemize}

\end{lemma}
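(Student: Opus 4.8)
The plan is to establish both parts by explicitly constructing an interpolating trigonometric polynomial (a dual certificate for the one-dimensional problem) with the stated strict saturation property, and then to read off injectivity of $\Psi$ as a by-product. For part (i), I would follow the kernel-based construction of~\cite{candes2014towards} with the refined separation constant of~\cite{tang2013compressed}. Fix an even, localized kernel $K$ of degree $\leq N$ (a suitable power of the Fej\'er kernel), normalized so that $K(0)=1$, $K'(0)=0$ and $K''(0)<0$, and look for
\[
q(t) = \sum_{j=1}^M \alpha_j\, K(t-t_j) + \sum_{j=1}^M \beta_j\, K'(t-t_j),
\]
imposing the $2M$ interpolation constraints $q(t_j)=s_j$ and $q'(t_j)=0$ for all $j$. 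The condition $q'(t_j)=0$ forces each node to be a critical point of $\abs{q}$, which is what will eventually pin the modulus to $1$ exactly at the $t_j$ and keep it strictly below elsewhere.

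Next I would solve for the coefficients. The interpolation conditions form a $2M\times 2M$ block linear system whose diagonal blocks come from $K(0)$ and $K''(0)$ and whose off-diagonal entries are controlled by $\abs{K(t_j-t_k)}$, $\abs{K'(\cdot)}$, $\abs{K''(\cdot)}$ for $j\neq k$. Under $\mdist(\Delta)\geq 2/N$, the decay estimates on $K$ and its derivatives make this matrix a small perturbation of a block-diagonal one, hence invertible, and yield quantitative bounds of the form $\alpha_j = s_j + \ord{\text{small}}$ with $\beta_j$ small. The sharper threshold $2/N$ (rather than the weaker constant of~\cite{candes2014towards}) is exactly what renders the associated kernel sums summable with the right constants. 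The heart of part (i) is then the bound $\abs{q(t)}<1$ for $t\notin\Delta$: I would split $\bbT$ into small arcs around each node and the complementary far region. On the far region the coefficient bounds combined with the decay of $K,K'$ give $\abs{q(t)}<1$ directly; on each near arc I would show $\abs{q}^2$ has a strict local maximum equal to $1$ at $t_j$ by bounding $\tfrac{d^2}{dt^2}\abs{q}^2$ away from $0$ using $K''(0)<0$ and the smallness of the cross terms, forcing $\abs{q}<1$ on the punctured arc. Injectivity of $\Psi$ follows from a standard duality pairing: if $\Psi a=0$, applying the certificate $q$ built for the sign pattern $s_j=\sgn(a_j)$ gives $0=\sum_j a_j\,\overline{q(t_j)}=\sum_j\abs{a_j}$, hence $a=0$.

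For part (ii) I would randomize this construction, following~\cite{tang2013compressed}. Replace $K$ by the random kernel assembled only from the selected frequencies in $\Gamma$, whose expectation over the draw of $\Gamma$ reproduces (a rescaling of) the deterministic kernel of part (i); construct $q$ by the same $2M$ interpolation conditions, but with the random kernel and the i.i.d.\ random signs $s_j$. Two concentration statements are then needed. First, matrix Bernstein shows that the random $2M\times 2M$ interpolation matrix concentrates around its expectation, so that it is invertible with high probability and its coefficients remain close to those of part (i). Second, a scalar Bernstein/Hoeffding bound, applied on a fine $\varepsilon$-net of $\bbT$ and extended to all of $\bbT$ via Bernstein's inequality for the derivative of the bounded-degree random polynomial, shows that $q$ and its first two derivatives stay uniformly close to their deterministic counterparts. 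Combining these yields $\abs{q(t)}<1$ off $\Delta$ with probability at least $1-\delta$, and the sample complexity $m\gtrsim\max\{\log^2(N/\delta),\,M\log(M/\delta)\log(N/\delta)\}$ emerges from a union bound balancing the $\ord{N}$ net points against the $M$ nodes.

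I expect the main obstacle to be precisely this uniform modulus control in the random regime: unlike the deterministic case one cannot simply invoke kernel decay, and must instead control the supremum over the continuum of a random trigonometric polynomial together with two of its derivatives. This is what forces both the net argument and the precise logarithmic dependence of $m$ on $N$, $M$, and $\delta$, and where the randomness of the signs $s_j$ is genuinely exploited to make the relevant cross terms mean-zero so that the concentration bounds apply.
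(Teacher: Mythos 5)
This lemma is not proved in the paper at all: it is imported verbatim from \cite{tang2013compressed} (with the deterministic part going back to \cite{candes2014towards}), so there is no in-paper argument to compare against. Your sketch faithfully reproduces the strategy of those references --- Jackson-kernel interpolation with the $q(t_j)=s_j$, $q'(t_j)=0$ system and near-diagonal dominance for (i); randomized kernel, matrix Bernstein for the $2M\times 2M$ interpolation matrix, and an $\varepsilon$-net plus Bernstein's inequality for the derivatives in (ii) --- the only loose point being that in the random-sign setting of (ii) the injectivity of $\Psi$ should be read off from the invertibility of the random interpolation/Gram matrix rather than from your duality pairing, since the certificate there exists only for the given random signs $s_j$ and not for the sign pattern of an arbitrary vector in $\ker\Psi$.
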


As a consequence of Lemma \ref{lem:dual_poly_1d}, Lemma \ref{lem:holdae} and Proposition \ref{prop:split}, we can prove the first of our  main results.

\begin{proof}[Proof of Theorem \ref{thm:rand_angles}]
By Proposition \ref{prop:split}, it suffices to show that conditions (A1) and (A2) are satisfied.
We first consider the case where $\Gamma = \{-N,\ldots,N\}$. By the assumption on the minimal distance $\nu_{\min}$ and by Lemma \eqref{lem:dual_poly_1d}, condition (A1) holds for all $\theta\in\Theta$. Furthermore, since for each $\theta\in\Theta$, the extremal values of $p_\theta$ are precisely $\br{\ip{x}{\theta}:\theta\in\Theta}$, by Lemma \ref{lem:holdae}, we have that
\be{\label{eq:A2become}
\bigcap_{\theta \in\Theta}\bigcup_{j=1}^M (x_j + \ell_\theta^\perp) = \{x_1,\ldots, x_M\}
}
holds with probability 1. So, the injectivity requirement of (A2) is satisfied if there exists $\theta \in \setdir$ such that the operator $A_\theta : \bbC^M \to \bbC^\setfreq$ defined by
$$
A_\theta: \bbC^M \to \bbC^\setfreq, \quad
a\in\bbC^M \mapsto \left(\sum_{j=1}^M a_j e^{-i 2\pi k \ip{\theta}{x_j}}\right)_{k\in \setfreq}.
$$
is injective.  However, this is true since $\setfreq=\{-N,\ldots, N\}$ contains $M$ consecutive integers, and standard results on Vandermonde matrices imply that $A_\theta$ is injective.

In the case where $\Gamma$ consists of $m$ indices chosen at random from $\br{-N,\ldots, N}$, by applying Lemma \ref{lem:dual_poly_1d} and the union bound, (A1) holds for all $\theta\in\Theta$ with probability at least $1-(d+1)\delta$. By Lemma \ref{lem:holdae}, (A2) holds with probability 1 and by Lemma \ref{lem:dual_poly_1d}, given any $\theta\in\Theta$, $A_\theta$ is injective with  probability  least $1-\delta$. So, (A2) holds with probability at least $1-\delta$.

\end{proof}

\subsection{Estimation of the minimum separation distance}
Let $Z=\br{z_i:~ i=1,\ldots, M}$ be i.i.d. random variables on the $d$-dimensional torus $\bbT^d=(\bbR/\bbZ)^d$ with density law $f$.  Given $z$, $z'\in\bbT^d$, we denote by $\dtd{z-z'}$ the canonical (obtained from the Euclidean norm) distance on $\bbT^d$. Let
$E_M^\delta=\{\mdist(Z) \geqslant \delta\}$, where, as before, $\mdist(Z):=\min_{z,z'\in Z, z\neq z'}\dtd{z-z'}$ and let $V_d$ denote the volume of the unit ball of $\mathbb{R}^d$.

The following two lemmas \revision{show} that with high probability $\mdist \asymp M^{-2/d}$, with Lemma \ref{lemdistance} showing that $\mdist\gtrsim M^{-2/d}$ and Lemma \ref{lem:distance_ub} showing that $\mdist\lesssim M^{-2/d}$. 
\begin{lemma}\label{lemdistance}

For any $\rho>0$ with $\delta=\left(\frac{2\rho}{M(M-1)V_d \Vert f\Vert_{2}^2}\right)^{1/d}<1$,  we have
\begin{equation}
\bbP\left(E_M^\delta\right)\geqslant 1-\rho.
\end{equation}
\end{lemma}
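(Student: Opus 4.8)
The plan is to pass to the complementary event and combine a union bound over pairs with a convolution estimate for each pairwise ``collision'' probability. Observe that $\mdist(Z)<\delta$ holds precisely when there is some pair $i\neq j$ with $\dtd{z_i-z_j}<\delta$. Since the $z_i$ are i.i.d., all $\binom{M}{2}=M(M-1)/2$ pairs are identically distributed, so a union bound gives
$$
\bbP\!\left(\mdist(Z)<\delta\right)\leq \frac{M(M-1)}{2}\,\bbP\!\left(\dtd{z_1-z_2}<\delta\right).
$$
Everything then reduces to estimating the single-pair probability.

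To control $\bbP(\dtd{z_1-z_2}<\delta)$, I would write it as a double integral against the density $f$ and change variables to $u=z-z'$ on the torus. This turns it into an integral of the autocorrelation over a metric ball,
$$
\bbP\!\left(\dtd{z_1-z_2}<\delta\right)=\int_{\{u\,:\,\dtd{u}<\delta\}} (f\star f)(u)\,\mathrm{d}u,
\qquad (f\star f)(u):=\int_{\bbT^d} f(z+u)\,f(z)\,\mathrm{d}z.
$$
By Cauchy--Schwarz one has $\norm{f\star f}_\infty\leq \norm{f}_2^2$, while the metric ball $\{u:\dtd{u}<\delta\}$ on $\bbT^d$ has measure at most $V_d\delta^d$ (it lifts to a subset of the Euclidean ball of radius $\delta$, with equality when $\delta\leq 1/2$ and strictly smaller amounts of overlap otherwise). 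Combining these yields the key pointwise bound
$$
\bbP\!\left(\dtd{z_1-z_2}<\delta\right)\leq \norm{f}_2^2\,V_d\,\delta^d.
$$

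Assembling the two displays gives
$$
\bbP\!\left(\mdist(Z)<\delta\right)\leq \frac{M(M-1)}{2}\,\norm{f}_2^2\,V_d\,\delta^d,
$$
and substituting the prescribed value $\delta^d=\frac{2\rho}{M(M-1)V_d\norm{f}_2^2}$ makes the right-hand side exactly $\rho$, so that $\bbP(E_M^\delta)=1-\bbP(\mdist(Z)<\delta)\geq 1-\rho$, as claimed.

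The genuinely delicate step is the single-pair estimate; the union bound and the final substitution are routine bookkeeping. Within that step, the two points requiring care are the Cauchy--Schwarz control $\norm{f\star f}_\infty\leq\norm{f}_2^2$ and the verification that the torus metric ball has volume at most $V_d\delta^d$ — it is here that the hypothesis $\delta<1$ is used to keep the ball from covering too much of $\bbT^d$.
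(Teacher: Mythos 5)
Your proof is correct and follows essentially the same route as the paper's: a union bound over the $M(M-1)/2$ pairs, a change of variables turning the single-pair collision probability into an integral of the autocorrelation $f\star f$ over the metric ball, the Cauchy--Schwarz bound $\norm{f\star f}_\infty\leq\norm{f}_2^2$, and the volume bound $V_d\delta^d$ for the ball. The only cosmetic difference is that the paper bounds $\bbP(E_2^\delta)$ directly rather than isolating $f\star f$ as a named object; the substance is identical.
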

\begin{proof}
We first remark that 
\begin{align}
\begin{split}\label{eq:PE2}
  \bbP\left(E_2^\delta\right)&=1-\bbP\left(\dtd{z_1-z_2}<\delta\right)=
  1-\iint_{\bbT^d\times \bbT^d} \chi_{\dtd{t_1-t_2}<\delta}f_{z_1}(t_1)f_{z_2}(t_2)\mathrm{d}t_1\mathrm{d}t_2\\
&=
  1-\iint_{\bbT^d\times \bbT^d} \chi_{\dtd{s}<\delta}f_{z_1}(t)f_{z_2}(t+s)\mathrm{d}t\mathrm{d}s\\
  &= 1-\int_{\bbT^d}\chi_{\dtd{s}<\delta}\left(\int_{\bbT^d}f_{z_2}(t+s)f_{z_1}(t)\mathrm{d}t \right) \mathrm{d}s\geqslant 1-\Vert f\Vert_{2}^2\delta^d V_d.
\end{split}
\end{align}
Then we observe that 
\begin{align*}
  \bbP\left(E_M^\delta\right)&=1-\bbP\left(\exists (i,j),\,i\leqslant M,\,j\leqslant M,\,i\neq j\text{ such that }\dtd{z_i-z_j}<\delta\right)\\
                             &\geqslant 1-\sum_{1\leqslant i<j\leqslant M} \bbP\left(\dtd{z_i-z_j}<\delta\right)
  \geqslant 1-\frac{M(M-1)}{2} \bbP\left(\dtd{z_1-z_2}<\delta\right)\\
&\geqslant 1-\frac{M(M-1)}{2}\Vert f\Vert_{2}^2\delta^d V_d=1-\rho.
\end{align*}
\end{proof}

\begin{lemma}\label{lem:distance_ub}
For any $t >0$, defining $\delta=2\left(\frac{t}{(M^2-M-1)V_d}\right)^{1/d}$, 
we have
\begin{equation}
\bbP\left(E_M^\delta\right)\leqslant e^{-t}
\end{equation}
\end{lemma}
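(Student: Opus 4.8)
The plan is to establish a lower bound on the probability that the minimum separation distance is small, complementing the upper bound of Lemma \ref{lemdistance}. The key observation is that $\{\mdist(Z)<\delta\}$, i.e.\ the complement of $E_M^\delta$, occurs whenever \emph{at least one} pair of points falls within distance $\delta$. Rather than union-bounding (which gives an upper bound on the complement), I would lower-bound $\bbP(\mdist(Z)<\delta)$ by an inclusion–exclusion or second-moment type argument, or more simply by a conditioning argument that shows the points are unlikely to all avoid each other's $\delta$-neighbourhoods. To get the cleanest bound, I would condition on the first point $z_1$ and estimate the probability that the remaining $M-1$ points all stay outside $B(z_1,\delta)$, then iterate.

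\textbf{Main steps.} First, I would compute the probability that a single point avoids a fixed ball of radius $\delta$: since the density is $f$, the mass of $B(z,\delta)$ is roughly controlled, but for a clean \emph{lower} bound on collisions I would instead use that the \emph{expected} number of close pairs is bounded below. Concretely, let $W = \#\{(i,j): i<j,\ \dtd{z_i-z_j}<\delta\}$ count the number of colliding pairs. Then $E_M^\delta = \{W=0\}$. The strategy is to bound $\bbP(W=0)$ from above. A natural route is to write, for each ordered chain, the probability that all $M$ points are pairwise $\delta$-separated, and bound this by a product. Conditioning on $z_1,\ldots,z_{k}$ being mutually separated, the conditional probability that $z_{k+1}$ avoids all $k$ existing balls is at most $1 - (\text{lower bound on union mass})$; summing the removed mass over $k=1,\ldots,M-1$ and using that disjoint small balls each carry mass at least of order $V_d\delta^d$ times a density lower bound yields
\begin{equation*}
\bbP\left(E_M^\delta\right) \leq \prod_{k=1}^{M-1}\left(1 - c\,k\,V_d\,\delta^d\right) \leq \exp\left(-c\,V_d\,\delta^d\sum_{k=1}^{M-1}k\right),
\end{equation*}
so that the exponent is of order $V_d\delta^d M^2$. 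Matching this against the target $\delta = 2\bigl(t/((M^2-M-1)V_d)\bigr)^{1/d}$ should make the exponent exactly $-t$ (up to the constant $2^d$ absorbed into the definition of $\delta$), giving $\bbP(E_M^\delta)\leq e^{-t}$.

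\textbf{Anticipated obstacle.} The delicate point is that for a genuinely \emph{upper} bound on $\bbP(E_M^\delta)$ one needs a \emph{lower} bound on how much probability mass a $\delta$-ball removes, and this must hold uniformly in the center location despite $f$ being an arbitrary (possibly highly non-uniform, even vanishing) density on $\bbT^d$. If $f$ can be zero on large regions, points may cluster where $f$ is large and separate easily where $f$ is small, so a naive pointwise lower bound on $f$ fails. I suspect the actual argument instead avoids conditioning and uses a cleaner probabilistic device: e.g.\ bounding $\bbP(E_M^\delta)$ via the survival estimate $\bbP(E_M^\delta)\leq \bbP(E_{M}^\delta \mid z_1)$ integrated against $f$, together with the elementary inequality $1-x\leq e^{-x}$ and a telescoping product, where the removed mass is measured by $\int_{B(z,\delta)} f\,\mathrm{d}\cL^d$ rather than a uniform bound. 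The factor $2$ in the definition of $\delta$ strongly suggests a trick where each confirmed separation removes a ball of radius $\delta/2$ (so the removed balls are disjoint), turning the union mass into a genuine sum $\sum_k$ of disjoint contributions; reconciling this geometric packing step with the Hölder/Cauchy–Schwarz control of $\|f\|_2$ (paralleling the $\|f\|_2^2$ appearing in Lemma \ref{lemdistance}) is where I expect the real work, and I would aim to route the estimate through $\|f\|_2$ rather than $\|f\|_\infty$ to keep the hypotheses consistent with the companion lemma.
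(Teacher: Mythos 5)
Your plan matches the paper's proof essentially step for step: the paper writes $\bbP(E_M^\delta)=\bbP(E_2^\delta)\prod_{k=3}^M\bbP(E_k^\delta\mid E_{k-1}^\delta)$, uses exactly the packing trick you anticipated (under $E_{k-1}^\delta$ the radius-$\delta/2$ balls around $z_1,\dots,z_{k-1}$ are disjoint, so the union of the radius-$\delta$ balls around them has volume at least $(k-1)V_d\delta^d/2^d$), and finishes with $1-x\le e^{-x}$ and the telescoping product. The obstacle you flagged does not arise because the lemma is implicitly stated for the uniform density on $\bbT^d$ (the paper's proof takes $\bbP(\dtd{z_1-z_2}<\delta)=V_d\delta^d$ and converts volume directly into probability mass), so no lower bound on a general $f$ is needed --- indeed the statement would be false for an arbitrary density.
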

\begin{proof}
We use now the expression of $\bbP(E_M^\delta)$ as a product of conditional probabilities :
\begin{equation}
\bbP\left(E_M^\delta\right)=\prod_{j=3}^M \bbP\left(E^\delta_{k}|E^{\delta}_{k-1}\right)\times \bbP(E^\delta_2)
\end{equation}
and  recall from \eqref{eq:PE2} that
\begin{equation}
\bbP(E^\delta_2)=1-V_d\delta^d\leqslant e^{-V_d\delta^d}.
\end{equation}
We observe then that if $E_{k-1}^\delta$ is satisfied, all the balls centered in $z_i,\, (i\leqslant k-1)$  with radius $\frac{\delta}{2}$ are disjoint, hence 
the volume of the union of balls centered in $z_i$ for $i\leqslant k-1$ with radius $\delta$ is at least $\frac{(k-1)V_d\delta^d}{2^d}$. This implies that 
\begin{equation}
\bbP\left(E^\delta_{k}|E^{\delta}_{k-1}\right)\leqslant \max\br{0, 1-\frac{(k-1)V_d\delta^d}{2^d}}\leqslant e^{-\frac{(k-1)V_d\delta^d}{2^d}}.
\end{equation}
So,
\begin{equation}
\bbP\left(E_M^\delta\right)\leqslant\prod_{k=3}^Me^{-\frac{(k-1)V_d\delta^d}{2^d}}\leqslant e^{-\frac{(M^2-M-1)V_d\delta^d}{2^d}}=e^{-t}.
\end{equation}
\end{proof}

\subsection{Proof of Theorems \ref{thm2}}

Using Lemma \ref{lemdistance} we can prove Theorem \ref{thm2}:

\prf{[Proof of Theorem \ref{thm2}]
By Proposition \ref{prop:split}, it suffices to show that conditions (A1) and (A2) are satisfied. For the first part, 
 we shall apply Lemma \ref{lemdistance} to show that for our $N$, $\nu_{\min}\geq 2/N$  with probability larger than $1-(d+1)\delta$. 
Hence, we need to consider  the laws of the projections of points $x_j$ on each directions $\theta\in \Theta$. Since the original density law of each $x_j$ is uniform on the $\ell_2$-ball $X$ of radius $1/2$, if we denote by $f_{d,X}$ the density law of the projection $(t_j)_{j=1}^d \mapsto (t_1,0,\cdots,0)$,  $f_{d,X}(t)$ is the normalized $\cH^{d-1}$ measure of the intersection of the hyperplane $\{(t,x): x\in\bbR^{d-1}\}$ with  the ball $X$, that is a $(d-1)$-dimensional ball of radius $\sqrt{1/4-t^2}$. Therefore,
$$
f_{d,X}(t) = \frac{V_{d-1}(\sqrt{1/4-t^2})}{V_d(1/2)} = 2^d(1/4-t^2)^{(d-1)/2} \frac{V_{d-1}(1)}{V_d(1)},
$$
where $V_{d}(r)$ denotes the volume of the $d$-dimensional ball of radius $r$. Hence,
\begin{align*}
\norm{f_{d,X}}_2^2&= 4\left(\frac{V_{d-1}(1)}{V_d(1)}\right)^2
\int_0^1 (1-t^2)^{d-1}\mathrm{d}t =
\frac{2V_{d-1}(1)^2V_{2d-1}(1)}{V_d(1)^2 V_{2d-2}(1)}\\
&=\frac{2\pi^{-\frac{1}{2}}\mathrm{G}(\frac{d}{2}+1)^2 \mathrm{G}(d)}{\mathrm{G}(\frac{d+1}{2})^2 \mathrm{G}(d+\frac{1}{2})}\leqslant \frac{2d+2}{\sqrt{\pi(2d-1)}},
\end{align*}   
where $\mathrm{G}$  denotes the Gamma function. Observe that we should compute the density of the $1$-periodized projections of the $x_i$'s, hence we should compute the $1$-periodization of $f_{d,X}$. But the assumption on the domain ($\domain=\overline{B}(0,1/2)$) prevents aliasing effects , and the periodization of $f_{d,X}$ coincides with its restriction on $(-1/2,1/2]$.

Thus, we can lower bound the minimum separation distance along each projected direction.  Then, by applying (i) of Lemma \ref{lem:dual_poly_1d} and the union bound, when $\Gamma = \br{-N,\ldots, N}$,  we can conclude that (A1) holds with $L'=L$ with probability at least $1-(d+1)\delta$. 
 For (A2), note that the fact that $(x_j)_{j=1}^M$ are random and independent of the directions $\setdir$ ensures, by Lemma \ref{lem:indep}, that we simply need to assert the injectivity condition in (A2) on $\supsat = \{x_1,\ldots, x_M \}$. However, (A2) is true by standard results on Vandermonde matrices.

The second part of Theorem \ref{thm2} is proved similarly, by applying Lemma \ref{lemdistance}, the union bound and (ii) of Lemma \ref{lem:dual_poly_1d}.

}

\section{Conclusion}
There has been several theoretical works on the recovery of point sources (in an infinite dimensional setting) via convex optimization approaches. Previous works tend to consider sampling on some uniform grid, whereas, this article presents the analogous results in the case where we are restricted to sampling along radial lines. We describe how the semidefinite programming approach of \cite{candes2014towards} can be extended to  compute the solutions of the total variation minimization problem in our radial lines setting. Although this work is concerned only with the recovery of discrete measures, our framework should be readily extendible for the study of generalized total variation minimization problems (as discussed in Section \ref{sec:generalizations}. For future work, one could consider the extension of our results to the recovery of more general functions. Such results would be of interest due to the practicality of radial lines sampling for applications such as magnetic resonance imaging.

\section*{Acknowledgements}
This work was partially supported by CNRS (D\'efi Imag'in de la Misson pour l'Interdisciplinarite\'e, project CAVALIERI).
The authors would like to thank Anders C. Hansen for reading an early draft of this work and for his constructive comments.

\appendix

 \addcontentsline{toc}{section}{References}
\bibliographystyle{abbrv}
\bibliography{References}

\end{document}